\def\labelenumi{\rm (\makebox[8pt]{\@arabic\c@enumi})}
\def\theenumi{\@arabic\c@enumi}
\theoremstyle{theorem}
\newtheorem{theorem}{Theorem}
\newtheorem{lemma}{Lemma}
\newtheorem{lemma-app}{Lemma A.\!}
\newtheorem{mainresult}{Main result}
\theoremstyle{definition}
\newtheorem{definition}{Definition}
\newtheorem{remark}{Remark}
\def\Res{\mathop{\rm Res}}
\def\sgn{\mathop{\rm sgn}}
\def\supp{\mathop{\rm supp}}
\newcommand{\Sch}[1]{\ensuremath{\mathcal{S}(#1)}}
\def\multilimits@{\bgroup
  \Let@
  \restore@math@cr
  \default@tag
 \baselineskip\fontdimen10 \scriptfont\tw@
 \advance\baselineskip\fontdimen12 \scriptfont\tw@
 \lineskip\thr@@\fontdimen8 \scriptfont\thr@@
 \lineskiplimit\lineskip
 \vbox\bgroup\ialign\bgroup\hfil$\m@th\scriptstyle{##}$\hfil\crcr}
\def\Sb{_\multilimits@}
\def\Sp{^\multilimits@}
\def\endSb{\crcr\egroup\egroup\egroup}
\def\smallmatrix{\null\,\vcenter\bgroup
 \Let@\restore@math@cr\default@tag
 \baselineskip6\ex@ \lineskip1.5\ex@ \lineskiplimit\lineskip
 \ialign\bgroup\hfil$\m@th\scriptstyle{##}$\hfil&&\thickspace\hfil
 $\m@th\scriptstyle{##}$\hfil\crcr}
\def\endsmallmatrix{\crcr\egroup\egroup\,}
\def\Z{\mathbb{Z}}
\def\Q{\mathbb{Q}}
\def\R{\mathbb{R}}
\def\C{\mathbb{C}}
\def\bfe{\mathbf{e}}
\begin{document}
\title{The modularity of Siegel's zeta functions}
\author{Kazunari Sugiyama
\!\!\footnote{Department of Mathematics, Chiba Institute of Technology, 2-1-1 
Shibazono, Narashino, Chiba, 275-0023, Japan. 
E-mail:\texttt{skazu@sky.it-chiba.ac.jp}}
\,\footnote{\textbf{Funding}: This research is supported by JSPS KAKENHI 
Grant Number 22K03251. 
\textbf{Data Availability}: Data sharing not applicable to this article as no datasets were generated or analysed during the current study.
\textbf{Conflict of interest}: The author declares that there are no conflicts of interest.}
}
\maketitle

\begin{abstract}
Siegel defined zeta functions associated with indefinite quadratic forms, and proved their analytic properties
such as analytic continuations and functional equations. 
Coefficients of these zeta functions are called measures of representations, and play an important role in the arithmetic 
theory of quadratic forms. 
In a 1938 paper, Siegel made a comment
 to the effect that the modularity of his zeta functions
would be proved with the help of
a suitable converse theorem. In the present paper, we accomplish
Siegel's original plan by using a Weil-type converse theorem for Maass forms, 
which has appeared recently.
It is also shown 
that ``half'' of Siegel's zeta functions correspond to holomorphic modular forms. 
\end{abstract}

\section*{Introduction}

In 1903, Epstein~\cite{E} defined the zeta function $\zeta_0(s)$
associated with a positive definite symmetric matrix $S$ of degree $m$ by
\[
 \zeta_0(s)= \sum_{a\in \Z^{m}\setminus\{0\}} \frac{1}{S[a]^s} \qquad 
 (S[a]= {}^{t}a S a),
\]
and studied their analytic properties such as analytic continuations and
functional equations.  
(For a modern treatment of Epstein's zeta functions, we refer to
Terras~\cite[\S 1.4.2]{Terras}.)
In a 1938 paper~\cite{SiegelZF1}, 
Siegel defined and investigated the zeta functions associated with
quadratic forms of signature $(1,m-1)$, and
in a 1939 paper~\cite{SiegelZF2}, those 
for general indefinite quadratic forms.
Although Siegel's calculations were rather involved, 
Siegel's results are now well understood in 
the framework of the theory of prehomogeneous vector spaces.
Let $Y$ be a non-degenerate half-integral symmetric matrix of degree $m$
with $p$ positive eigenvalues and $m-p$ negative eigenvalues 
($0<p<m$). 
Let $SO(Y)$ be the special orthogonal group of $Y$ and denote by 
$SO(Y)_{\Z}$ its arithmetic subgroup. We put
$V_{\pm}=\{v\in \R^m\, ; \sgn Y[v]=\pm\}$. Then Siegel's zeta functions 
are Dirichlet series associated with the prehomogeneous vector space
$(GL_1(\C)\times SO(Y), \, \C^m)$, and 
are defined by
\[
 \zeta_{\pm}(s)=
 \sum_{v\in SO(Y)_{\Z}\backslash (\Z^m\cap V_{\pm})}
 \frac{\mu(v)}{|Y[v]|^s},
\]
where the sum runs over a complete set of representatives of
$SO(Y)_{\Z}\backslash (\Z^m \cap V_{\pm})$, 
and $\mu(v)$  is a certain volume of the fundamental domain 
related to the isotropy subgroup $SO(Y)_v$ of $SO(Y)$ at $v$. 
In the positive definite case, the {\it modularity} of Epstein's zeta
function $\zeta_0(s)$ is almost obvious; $\zeta_0(s)$ 
is obtained by taking the Mellin transform
of (the restriction to the imaginary axis of) the theta series
\[
 \theta(S, z)= \sum_{a\in \Z^m} e^{\pi i S[a] z}
 \qquad (z= x+ iy\in \C, \, y>0),
\]
which is a modular form for a subgroup of $SL_2(\Z)$. 
(cf.\ Miyake~\cite[\S 4.9]{Miyake}, \ Terras~\cite[\S 3.4.4]{Terras}.)
On the contrary, it is not clear from the definition whether or not
Siegel's zeta function arises as an integral transform of some infinite series
with modular properties. Rather,  in the preface to a 1938 paper~\cite{SiegelZF1}, 
Siegel wrote that 
such theta series would be constucted from his zeta functions, 
citing the work of 
Hecke~\cite{HeckeIndefQ1925}, in which Hecke derived the
transformation formula for the theta series associated with indefinite binary 
quadratic forms from the functional equation of zeta functions of
real quadratic fields.
Furthermore, Siegel made the following remark
in the last section of~\cite{SiegelZF1}:

\begin{quotation}
{\it  Will man die Transformationstheorie von $f(\mathfrak{S}, x)$
 f\"{u}r beliebige Modulsubstitutionen entwickeln, so hat man au\ss{}er
 $\zeta_1(\mathfrak{S}, s)$ auch analog gebildete Zetafunktionen mit Restklassen-Chrakteren
 zu untersuchen. Die zum Beweise der S\"{a}tze 1, 2,3 f\"{u}hrenden 
  \"{U}berlegungen lassen sich ohne wesentiche Schwierigkeit auf den allgemeinen Fall 
  \"{u}bertragen. Verm\"{o}ge der Mellinschen Transformation erh\"{a}lt man dann 
  das wichtige Resultat, da\ss{} die durch (53) definierte Funktion $f(\mathfrak{S}, x)$
  eine Modulform der Dimension $\frac{n}{2}$ und der Stufe $2D$ ist; dabei wird
  vorausgesetzt, da\ss{} $n$ ungerade und $\mathfrak{x}'\mathfrak{S}\mathfrak{x}$
  keine tern\"{a}re Nullform ist. }
\end{quotation}

\begin{quotation}
If one wants to develop the transformation theory of $f(\mathfrak{S},x)$
 for arbitrary modular substitutions, then in addition to 
 $\zeta_1(\mathfrak{S},s)$ one also has to investigate zeta functions
 formed analogously  with residual class characters.
 The considerations leading to the proof of Theorems 1, 2, 3 
 can be transferred to the general case without any major difficulty. 
 By virtue of the (inverse) Mellin transformation, one then obtains
 an important result that the function  $f(\mathfrak{S}, x)$
 defined by (53) is a modular form of weight $\frac{n}{2}$ and level $2D$, 
 provided that  $n$ is odd and $\mathfrak{x}'\mathfrak{S}\mathfrak{x}$
 is not a ternary zero form.
\end{quotation}

As of 1938, Siegel seemed to have noticed the possibility that 
by considering the twists of zeta functions by Dirichlet characters, one 
can prove modularity for congruence subgroups. 
In the holomorphic case,
this fact is known as Weil's converse theorem~\cite{Weil}.
It was 1967 when Weil's paper~\cite{Weil} appeared!
Revisiting Siegel's prediction in the light of recent developments 
is one motivation for the present study.

We should note that in the quotation above, Siegel mentioned
the parity of $n$, 
the number of variables of quadratic forms.
This is related to the fact that the concept of non-holomorphic modular forms was 
not yet in place at that time. 
In a celebrated paper~\cite{Maass}, 
Maa\ss{} introduced the notion of the so-called Maass forms
and established a Hecke correspondence for Maass forms. 
Further, in \cite{MaassIQF}, as its application of his theorem, 
Maa\ss{}  proved that in a very special case (when $Y$ is diagonal of even degree
with $\det Y=1$), Siegel's zeta functions can be expressed as the product
of two standard Dirichlet series such as the Riemann zeta function $\zeta(s)$
and the Dirichlet $L$-function $L(s, \chi)$. 
On the other hand, it is only recently
that papers on Weil-type converse theorems for Maass forms have  
emerged (cf.\ \cite{MSSU, NeururerOliver}).
It would be a very natural idea for us
to accomplish Siegel's original plan
to prove the modularity via converse theorem including the case of
non-holomorphic  forms.

Siegel's zeta functions are closely related to the so-called
Siegel's main theorem ({\it Siegelsche HauptSatz}).
In a 1951 paper~\cite{Siegel}, Siegel proved
the transformation formula for some theta series arising from indefinite quadratic forms, and
the equality between an integral of the indefinite theta series 
over fundamental domains and some
 Eisenstein series (cf. \cite[Satz~1]{Siegel}).
It was  shown in \cite[Hilfssatz 4]{Siegel} that  the coefficients 
\[
 M(Y;\pm n) = \sum
 \begin{Sb}
 v\in SO(Y)_{\Z}\backslash (\Z^m \cap V_{\pm})\\
 Y[v]= \pm n
 \end{Sb} \mu(v) \qquad (n=1, 2, 3,\dots)
\]
of $\zeta_{\pm}(s)$ coincide with the Fourier coefficients 
of the non-holomorphic modular forms appearing in Siegel's formula. 
Here we ignore the differences in the definitions of $\mu(v)$; 
the definitions of measures are different for each of the  
papers~\cite{SiegelZF1, SiegelZF2, Siegel}.
Siegel called $M(Y;n)$ 
the measures of representations ({\it Darstellungsma\ss{}}). 
The measure $M(Y;n)$ of representations is an analogue of the 
representation number 
\[
r(S;n)=\sharp \{a\in \Z^{m}\; ;\; S[a]=n\}
\]
for a positive symmetric matrix $S$, and Siegel's formula can be
reformulated as  an arithmetic 
identity that $M(Y, n)$ is equal to the product of local representation
densities over all primes. 
Weil~\cite{WeilActa1965} generalized Siegel's result by using the language 
of adeles, and it is the Siegel-Weil formula---a cornerstone in the modern
 number theory.

Now we explain the main results of the present paper. 
First, along the Sato-Shintani theory~\cite{SatoShintani} of 
prehomogeneous vector spaces, we define Siegel's zeta functions
and prove their analytic properties.  
Here, to treat twisted zeta functions as well as the original 
Siegel's zeta functions, we first consider 
Siegel's zeta functions {\it with congruence conditions}, 
which are defined using Schwartz-Bruhat functions
on $\Q^m$. This idea is due to F.\ Sato~\cite{SatoZetaDist}.
Then the converse theorem in~\cite{MSSU} is applied to the zeta functions,
and the following result  is obtained:

\begin{mainresult}[Theorem~\ref{theorem:SiegelMaass}]
Let $m\geq 5$.
Assume that at least one of $m$ or $p$ is an odd integer.
Take an integer $\ell$
with $\ell\equiv 2p-m \pmod{4}$, and put $D=\det (2Y)$. 
Let $N$ be the level of $2Y$. 
Define $C^{\infty}$-function 
$F(z)= F(x+iy)$ on the Poincar\'{e} upper half-plane $\mathcal{H}$ by
\begin{align*}
F(z) &=   y^{(m-\ell)/4}\cdot \int_{SO(Y)_{\R}/SO(Y)_{\Z}} d^{1}g  
+ \alpha(0) \cdot  y^{1-(m+\ell)/4}
\\
&+\sum
\begin{Sb}
n=-\infty \\
n\neq 0
\end{Sb}^{\infty}
 (-1)^{(2p-m-\ell)/4}\cdot  \frac{M(Y;n)}{|D|^{\frac{1}{2}}}
\frac{\pi^{\frac{m}{4}} \cdot  |n|^{-\frac{m}{4}}}
{\Gamma\left(\frac{m+\sgn(n)\ell}{4}\right)}\cdot
y^{-\frac{\ell}{4}}W_{\frac{\sgn(n)\ell}{4}, \frac{m}{4}-\frac{1}{2}}(4\pi |n|y) \bfe[nx],
\end{align*}
where $d^{1}g$ is a suitably normalized Haar measure on $SO(Y)_{\R}$, 
$\alpha(0)$ is some constant determined by the residues of $\zeta_{\pm}(s)$,
and $W_{\mu, \nu}(y)$ denotes the Whittaker function. Then, 
$F(z)$ is a Maass form of weight $\ell/2$ with respect to 
 the congruence subgroup $\Gamma_0(N)$. 
\end{mainresult}
The above formula can be compared with Siegel's 
calculation~\cite[Hilfssatz~4]{Siegel} of the Fourier expansions
of non-holomorphic modular forms. 
Our $F(z)$ is essentially the same as the modular form given by Siegel~\cite{Siegel}.
See Remark~\ref{remark:RelationToSiegel}.
The theorem above excludes the case where both $m$ and $p$ are even. 
Our second result states that if one of $m-p$ and $p$ are even, 
we can construct 
holomorphic modular forms from $M(Y; \pm n)$.

\begin{mainresult}[Theorem~\ref{theorem:HolomorphicForms}]
Let $m\geq 5$.
Assume that $m-p$ is even. 
We define a holomorphic function $F(z)$ on $\mathcal{H}$ by
\begin{align*}
F(z) &= (-1)^{\frac{m-p}{2}} (2\pi)^{-\frac{m}{2}} \cdot \Gamma\left(\frac{m}{2}\right) 
\int_{SO(Y)_{\R}/SO(Y)_{\Z}} d^{1}g \\
&\qquad 
 +  |D|^{-1/2}\cdot\sum_{n=1}^{\infty}   M(Y;n)\mathbf{e}[nz].
\end{align*}
Then, $F(z)$ is a holomorphic modular form of weight $m/2$ with respect to 
$\Gamma_0(N)$.
(In the case that $p$ is even, we can construct holomorphic modular forms
from $M(Y; -n)\; (n=1,2,3,\dots)$. )
\end{mainresult}

The theorem above is consistent with a result of Siegel
that was published  in a 1948 paper~\cite{Siegel1948}.
In this paper, Siegel calculated the action of certain differential operators
on indefinite theta series, and proved that in the case of $\det Y>0$, 
we can construct holomorphic modular forms from indefinite theta series
associated with $Y$.

Before closing Introduction, we give some
 remarks on related researches, future problems, 
 and possible applications.
Special values of 
Siegel's zeta functions associated with $Y$ of signature $(1, m-1)$ appear
in the dimension formula for automorphic forms on orthogonal 
groups of signature $(2, m)$ (cf. Ibukiyama~\cite{IbukiyamaRIMS95}), and
it is important to investigate their arithmetic aspects.
Ibukiyama~\cite{Ibukiyama} proved an explicit formula expressing 
Siegel zeta functions (with $m$ even) 
as linear combinations of products of 
two shifted Dirichlet $L$-functions and certain elementary factors.
His proof is given by direct calculations using Siegel's main 
theorem in~\cite{Siegel} and not by converse theorems.  
Ibukiyama's explicit formula is quite general
and includes the above-mentioned result of Maa\ss{}~\cite{MaassIQF}.
We also mention the work~\cite{HafWal} of Hafner-Walling, in which 
they carried out extensive calculations to make Siegel's formula more explicit 
in terms of standard Eisenstein series. This work is also restricted to the case 
where $m$ is even. 
It is worthwhile to investigate the case where $m$ is odd. 
Finally, in a good situation, the method of converse theorems can be used 
to prove lifting theorems. In~\cite{SugiyamaPJAS}, a Shintani-Katok-Sarnak 
type correspondence is derived from analytic properties of a 
certain prehomogeneous zeta function whose coefficients involve
periods of Maass cusp forms. 
In~\cite{Maass1959}, Maa\ss{}  
studied a generalization of Siegel's zeta functions, which can be
regarded as prehomogeneous zeta functions
whose coefficients involve periods of automorphic forms on orthogonal groups.
It is quite probable that  our method can be applied to these zeta 
functions, and some lifting theorems will be obtained.  
We hope to discuss this topic elsewhere.

The present paper is organized as follows.
In Section~\ref{section:Preliminaries}, 
we recall a Weil-type converse theorem for Maass forms, and 
in Section~\ref{section:PV}, we define our prehomogeneous vector spaces 
and give the local functional equations. 
Section~\ref{section:SiegelZeta} is devoted to define Siegel's zeta functions
with congruence conditions, 
and analytic properties of Siegel's zeta functions are proved in 
Section~\ref{section:PropertiesOfSiegelZeta}.
We prove our main theorems in 
Sections~\ref{section:MainTh} and~\ref{section:Holomorphic}.

\bigskip

\noindent
\textbf{Acknowledgement.}
The author wishes to thank Professor Fumihiro Sato for stimulating
discussion and helpful suggestions. 
The author also thanks Professor Tomoyoshi Ibukiyama for valuable comments; 
in particular, Professor Ibukiyama  explained the relation 
of the results of this paper to
the prior work of~\cite{IbukiyamaRIMS95, Ibukiyama, MaassIQF, Siegel1948}.
Finally, the author would like to thank anonymous reviewers for 
their careful reading and helpful comments.

\bigskip

\noindent
\textbf{Notation.}
We denote by $\Z, \Q,\R$, and $\C$ the ring of integers, the field of rational numbers,  the field of real numbers, and  the field of complex numbers, respectively. 
The set of non-zero real numbers and the set of positive real numbers are denoted by $\R^{\times}$ and $\R_{+}$, respectively.
The set of positive integers and the set of non-negative integers are denoted by $\Z_{>0}$ and $\Z_{\geq0}$, respectively. 
The real part and the imaginary part of a complex number $s$ are denoted by $\Re(s)$ and $\Im(s)$, respectively. 
For complex numbers $\alpha, z$ with $\alpha\ne0$, $\alpha^z$ always stands for the principal value, namely, $\alpha^z = \exp((\log|\alpha|+i\,\mathrm{arg}\, \alpha)z)$ with $-\pi<\mathrm{arg}\, \alpha\leq\pi$. 
We use $\mathbf{e}[x]$ to denote $\exp(2\pi i x)$. 
The quadratic residue symbol $\left(\frac{\ast}{\ast}\right)$ has the same meaning as in Shimura~\cite[p.~442]{Shimura73}. 
For a meromorphic function $f(s)$ with a pole at $s=\alpha$, we denote its residue at $s=\alpha$ by $\displaystyle \Res_{s=\alpha} f(s)$.

\section{A Weil-type converse theorem for Maass forms}
\label{section:Preliminaries}

In this section, we define Maass forms
on the Poincar\'{e} upper half-plane
$\mathcal{H}=\{z\in \C\, | \, \Im(z)>0\}$
of integral and half-integral weight, and 
recall a Weil-type converse theorem for Maass forms 
that is proved in \cite{MSSU}.
We refer to Cohen-Str\"{o}mberg~\cite{CH} for an overview of 
the theory of Maass forms. 
Let 
$\Gamma=SL_{2}(\Z)$ be the modular group, and
for a positive integer $N$, we denote by $\Gamma_0(N)$ the
congruence subgroup defined by
\[
 \Gamma_0(N)=\left\{ \left.
 \begin{pmatrix}
 a & b \\
 c & d 
 \end{pmatrix}\in \Gamma \,  \right| \,  
 c\equiv 0 \pmod{N}
 \right\}.
\]
As usual, $\Gamma$ acts on $\mathcal{H}$ by
the linear fractional transformation
\[
 gz = \frac{az+b}{cz+d} \quad \text{for}
 \quad g =
 \begin{pmatrix}
 a & b \\
 c & d 
 \end{pmatrix} \in \Gamma.
\]
We put $j(\gamma, z)=cz+d$, and define $\theta(z)$ and
$J(\gamma, z)$ by
\[
\theta(z)=\sum_{n=-\infty}^{\infty} \exp(2\pi i n^2 z), \qquad
J(\gamma, z)=\frac{\theta(\gamma z)}{\theta(z)}.
\]
Then it is well-known that
\[
J(\gamma, z)= \varepsilon_{d}^{-1} \cdot \left(\frac{c}{d}\right)\cdot 
 (cz+d)^{1/2} \quad \text{for}\; \; \gamma=
 \begin{pmatrix}
 a & b \\
 c & d 
 \end{pmatrix}\in \Gamma_0(4), 
\]
where 
\begin{equation}
\label{eqn:def of epsilond}
\varepsilon_d = \begin{cases}
 1 & (d \equiv 1 \pmod 4), \\
 i & (d \equiv 3 \pmod 4).
 \end{cases}
\end{equation}

For an integer $\ell$, 
the hyperbolic Laplacian 
$\Delta_{\ell/2}$ of weight $\ell/2$  on $\mathcal{H}$ is defined by  
\begin{equation}
\label{form:Laplacian}
 \Delta_{\ell/2} = -y^2\left(\frac{\partial^2}{\partial x^2}+
 \frac{\partial^2}{\partial y^2}\right)+ \frac{i \ell y}{2}\left(\frac{\partial}{\partial x}
 +i\frac{\partial}{\partial y}\right) \quad (z=x+iy \in \mathcal{H}). 
\end{equation}

Let $\chi$ be a Dirichlet character mod ${N}$. Then we use 
the same symbol $\chi$ to denote the character of ${\Gamma}_0(N)$ 
defined by
\begin{equation}
 \label{form:ChiInducesCharOnGamma0N}
 \chi(\gamma)=\chi(d) \qquad \text{for} \quad
 \gamma=
 \begin{pmatrix}
 a & b \\
 c & d 
 \end{pmatrix}\in \Gamma_0(N).
\end{equation}

\begin{definition}[Maass forms]
\label{defn:DefOfMaassForms}
Let $\ell \in \Z$, and $N$ be a positive integer, with $4|N$ when $\ell$ is odd.  
A complex-valued $C^{\infty}$-function $F(z)$ on $\mathcal{H}$
is called {\it a Maass form for} ${\Gamma}_0(N)$
of weight $\ell/2$ with character $\chi$, if the following three conditions 
are satisfied;
\begin{enumerate}
\def\labelenumi{(\roman{enumi})}
\item  for every $\gamma \in \Gamma_0(N)$, 
\[
 F(\gamma z) = 
 \begin{cases}
 \chi(\gamma) j(\gamma, z)^{\ell/2} \cdot F(z) & 
 (\text{$\ell$ is even}) \\[3pt]
  \chi(\gamma) J(\gamma, z)^{\ell} \cdot F(z) & 
 (\text{$\ell$ is odd})
 \end{cases},
\]
\item 
$\Delta_{\ell/2} F= \Lambda \cdot F$ with some $\Lambda\in \C$,
\item  
$F$ is of moderate growth at every cusp,
namely,  for every $A \in SL_2(\Z)$, there exist positive constants $C$, $K$ and $\nu$ depending on $F$ and $A$ such that 
\[
|F(A z)| \cdot |j(A, z)|^{-\ell/2} < 
C y^{\nu} \quad \text{if}\;\;   y= \Im(z)>K.
\]
\end{enumerate}
We call $\Lambda$ the {\it eigenvalue} of $F$. 
\end{definition}

Let 
 $\lambda$ be a complex number with $\lambda \not\in 1-\frac12\Z_{\geq0}$. 
Let 
 $\alpha=\{\alpha(n)\}_{n\in \Z\setminus\{0\}}$ and $\beta=\{\beta(n)\}_{n\in \Z\setminus\{0\}}$ be  complex sequences of polynomial growth. 
For $\alpha,\beta$, we can define the  $L$-functions 
 $\xi_{\pm}(\alpha;s), \xi_{\pm}(\beta;s)$
  and the completed $L$-functions
 $\Xi_{\pm}(\alpha;s), \Xi_{\pm}(\beta;s)$ by
 \begin{align*}
 \xi_{\pm}(\alpha;s) &= 
 \sum_{n=1}^{\infty}\frac{\alpha(\pm n)}{n^s}, & 
 \Xi_{\pm}(\alpha;s)&=
 (2\pi)^{-s} \Gamma(s) \xi_{\pm}(\alpha;s), \\
  \xi_{\pm}(\beta;s) &= 
 \sum_{n=1}^{\infty}\frac{\beta(\pm n)}{n^s}, & 
 \Xi_{\pm}(\beta;s)&=
 (2\pi)^{-s} \Gamma(s) \xi_{\pm}(\beta;s).  
 \end{align*}

In the following, for simplicity, we put 
\begin{gather*}
\label{eqn:xi-alpha-e-o}
\xi_e(\alpha;s) = \xi_+(\alpha;s) + \xi_-(\alpha;s), \quad
\xi_o(\alpha;s) = \xi_+(\alpha;s) - \xi_-(\alpha;s).
\end{gather*}

Now we assume the following conditions [A1] -- [A4]: 

\begin{description}
 \item[{\bf [A1]}] The $L$-functions $\xi_{\pm}(\alpha;s), \xi_{\pm}(\beta;s)$  have meromorphic continuations to the whole $s$-plane, and $(s-1)(s-2+2\lambda)\xi_\pm(\alpha;s)$ and $(s-1)(s-2+2\lambda)\xi_\pm(\alpha;s)$ are entire functions, which are of finite order in any vertical strip. 
\end{description}
Here a function $f(s)$ on a vertical strip $\sigma_1 \leq \Re(s) \leq \sigma_2$ $(\sigma_1,\sigma_2 \in \R, \sigma_1<\sigma_2)$ is said to be {\it of finite order\/} on the strip if there exist some positive constants $A,B,\rho$ such that 
\[
|f(s)| < A e^{B|\Im(s)|^\rho}, \quad \sigma_1 \leq \Re(s) \leq \sigma_2.
\]
\begin{description}
 \item[{\bf [A2]}]
The residues of  $\xi_{\pm}(\alpha;s)$ and $\xi_{\pm}(\beta;s)$ at $s=1$ satisfy
\[
\mathop{\mathrm{Res}}_{s=1} \xi_+(\alpha;s) = \mathop{\mathrm{Res}}_{s=1} \xi_-(\alpha;s), \quad 
\mathop{\mathrm{Res}}_{s=1} \xi_+(\beta;s) = \mathop{\mathrm{Res}}_{s=1} \xi_-(\beta;s). 
\] 
 \item[{\bf [A3]}] The following functional equation holds:
 \[
  \gamma(s)\left(
 \begin{array}{c}
 \Xi_{+}(\alpha;s) \\[2pt] \Xi_{-}(\alpha;s)
 \end{array}
 \right)\\
 = N^{2-2\lambda-s}\cdot 
 \Sigma(\ell)\cdot
 \gamma(2-2\lambda-s)\left(
 \begin{array}{c}
 \Xi_{+}(\beta;2-2\lambda-s) \\[2pt] \Xi_{-}(\beta;2-2\lambda-s)
 \end{array}
 \right),
 \]
 where  $\gamma(s)$ and $\Sigma(\ell)$ are defined by
  \begin{equation}
  \label{form:DefOfGammaAndSigma}
  \gamma(s) = 
  \begin{pmatrix}
  e^{\pi s i/2}  & e^{-\pi s i/2} \\
  e^{-\pi s i/2}  & e^{\pi s i/2} 
 \end{pmatrix}, \qquad
 \Sigma(\ell) =
 \begin{pmatrix}
 0 & i^{\ell} \\
 1 & 0
 \end{pmatrix}.
 \end{equation}
\item[{\bf [A4]}]
If $\lambda = \frac q2$ $(q \in \Z_{\geq0},\ q \geq 4)$, then
\[
\xi_+(\alpha;-k)+(-1)^k\xi_-(\alpha;-k)=0 \quad (k = 1,2,\ldots,q-3).
\]
\end{description}

Under the assumptions [A1] -- [A4], we define $\alpha(0)$, $\beta(0)$, $\alpha(\infty)$, $\beta(\infty)$ by 
\begin{eqnarray}
 \label{form:DefOfA0AndAInfty}
 \alpha(0) &=&  
-\xi_{e}(\alpha;0) \\
 \label{form:DefOfA0AndAInfty2}
\alpha(\infty) &=&
 \frac{N}{2} \Res_{s=1}
 \xi_{e}(\beta;s), \\[4pt]
 \label{form:DefOfB0AndBInfty}
 \beta(0) &=&
  -\xi_{e}(\beta;0)   \\
 \label{form:DefOfB0AndBInfty2}
 \beta(\infty) &=&
 \frac{i^{-\ell}}{2} \Res_{s=1}
 \xi_{e}(\alpha;s),
\end{eqnarray}

For an odd prime number $r$ with $(N,r)=1$ and a Dirichlet character $\psi$ mod $r$, the twisted $L$-functions 
  $\xi_{\pm}(\alpha, \psi;s), \Xi_{\pm}(\alpha, \psi;s), 
    \xi_{\pm}(\beta, \psi;s), \Xi_{\pm}(\beta, \psi;s)$ 
are defined by
 \begin{align*}
 \xi_{\pm}(\alpha, \psi;s) &= 
 \sum_{n=1}^{\infty}\frac{\alpha(\pm n)\tau_{\psi}(\pm n)}{n^s}, & 
 \Xi_{\pm}(\alpha,\psi; s)&=
 (2\pi)^{-s} \Gamma(s) \xi_{\pm}(\alpha,\psi;s), \\
  \xi_{\pm}(\beta,\psi;s) &= 
 \sum_{n=1}^{\infty}\frac{\beta(\pm n)\tau_{\psi}(\pm n)}{n^s}, & 
 \Xi_{\pm}(\beta, \psi;s)&=
 (2\pi)^{-s} \Gamma(s) \xi_{\pm}(\beta,\psi;s), 
 \end{align*}
where $\tau_{\psi}(n)$ is the Gauss sum defined by 
\begin{equation}
   \tau_{\psi}(n)= \sum
 \begin{Sb}
 m \bmod r \\
 (m, r)=1 
 \end{Sb} \psi(m) e^{2\pi i mn/r}. 
\label{eqn:def of Gauss sum}
\end{equation}
We put 
\[
\tau_\psi=\tau_\psi(1)=\sum
 \begin{Sb}
 m \bmod r \\
 (m, r)=1 
 \end{Sb} \psi(m) e^{2\pi i m/r}
\]
and denote by $\psi_{r,0}$ the principal character modulo $r$.  
Recall that the Gauss sums are calculated as follows:
\begin{eqnarray}
\tau_\psi(n) 
 &=& \begin{cases}
   \overline{\psi(n)} \tau_\psi  & (n \not\equiv 0 \pmod{r}), \\
   0 &  (n \equiv 0 \pmod{r}),
      \end{cases}
      \quad \text{if $\psi\ne\psi_{r,0}$},  
\label{eqn:Gauss sum non-trivial psi}\\ 
\tau_{\psi_{r,0}}(n) 
  &=& \begin{cases}
  -1 &  (n \not\equiv 0 \pmod{r}), \\
   r-1 & (n \equiv 0 \pmod{r}).
  \end{cases}
\label{eqn:Gauss sum trivial psi}
\end{eqnarray}
 
Let $\mathbb{P}_N$ be a set of odd prime numbers not dividing $N$ such that, for any positive integers $a, b$ coprime to each other, $\mathbb{P}_N$ contains a prime number $r$ of the form $r=am+b$ for some $m \in \Z_{>0}$.   
For an $r \in \mathbb{P}_N$, denote by $X_r$ the set of all Dirichlet characters mod $r$ (including the principal character $\psi_{r,0}$). 
For $\psi \in X_r$, we define the Dirichlet character  $\psi^*$  by
\begin{equation}
\psi^*(k)=\overline{\psi(k)}\left(\frac{k}{r}\right)^{\ell}. 
\label{eqn:psi-star}
\end{equation}
We put 
\begin{equation}
 C_{\ell,r} =
  \begin{cases}
  1 & (\text{$\ell$ is even}), \\
  \varepsilon_{r}^{\ell}  &   (\text{$\ell$ is odd}).
 \end{cases}
\label{eqn:def of clr}
\end{equation}
(For the definition of $\varepsilon_r$, see \eqref{eqn:def of epsilond}.)

In the following, we fix a Dirichlet character $\chi$  mod ${N}$
 that satisfies $\chi(-1)=i^{\ell}$ (resp.\ $\chi(-1)=1$)
 when $\ell$ is even (resp.\ odd).

For an $r \in \mathbb{P}_N$ and a $\psi \in X_r$, we consider the following conditions $\mathrm{[A1]}_{r,\psi}$ -- $\mathrm{[A5]}_{r,\psi}$ on  $\xi_{\pm}(\alpha,\psi;s)$ and $\xi_{\pm}(\beta,\psi^*;s)$.  
 
\begin{description}
\item[$\text{\bf [A1]}_{r,\psi}$] 
 $\xi_{\pm}(\alpha,\psi;s), \xi_{\pm}(\beta,\psi^*;s)$  have meromorphic 
 continuations to the whole $s$-plane, and $(s-1)(s-2+2\lambda)\xi_{\pm}(\alpha,\psi;s), (s-1)(s-2+2\lambda)\xi_{\pm}(\beta,\psi^*;s)$ are entire functions, which are of finite order in any vertical strip. 
\item[$\text{\bf [A2]}_{r,\psi}$] 
The residues of  $\xi_{\pm}(\alpha,\psi;s)$ and $\xi_{\pm}(\beta,\psi^*;s)$ satisfy
\[
\mathop{\mathrm{Res}}_{s=1} \xi_+(\alpha,\psi;s) = \mathop{\mathrm{Res}}_{s=1} \xi_-(\alpha,\psi;s), \quad 
\mathop{\mathrm{Res}}_{s=1} \xi_+(\beta,\psi^*;s) = \mathop{\mathrm{Res}}_{s=1} \xi_-(\beta,\psi^*;s). 
\]
\item[$\text{\bf [A3]}_{r,\psi}$] 
$\Xi_{\pm}(\alpha,\psi;s)$ and $\Xi_{\pm}(\beta,\psi^*;s)$
satisfy the following functional equation:
\begin{align*}
 \gamma(s)\left(
 \begin{array}{c}
 \Xi_{+}(\alpha,\psi;s) \\ \Xi_{-}(\alpha,\psi;s)
 \end{array}
 \right) &= 
 \chi(r) \cdot C_{\ell,r} \cdot  \psi^*(-N) \cdot 
 r^{2\lambda-2}\cdot (Nr^2)^{2-2\lambda-s} \\
 &\qquad \cdot \Sigma(\ell)\cdot  \gamma(2-2\lambda-s)
 \begin{pmatrix}
 \Xi_{+}\left(\beta, {\psi}^*;2-2\lambda-s\right) \\[4pt]
 \Xi_{-}\left(\beta, {\psi}^*;2-2\lambda-s\right)
 \end{pmatrix}, 
 \end{align*}
where $\gamma(s)$ and $\Sigma(\ell)$ are the same as
\eqref{form:DefOfGammaAndSigma} in {\bf [A3]}.
%
\item[$\text{\bf [A4]}_{r,\psi}$] 
If $\lambda = \frac q2$ $(q \in \Z_{\geq0},\ q \geq 4)$, then
\[
\xi_+(\alpha,\psi;-k)+(-1)^k\xi_-(\alpha,\psi;-k)=0 \quad (k = 1,2,\ldots,q-3).
\]

\item[$\text{\bf [A5]}_{r,\psi}$] 
The following four relations between residues and special values hold: 
\end{description}
$$
\leqno{\bullet} \quad 
 \xi_e(\alpha,\psi;0) 
  = \tau_\psi(0)
          \xi_e(\alpha;0). 
$$
$$
\leqno{\bullet} \quad
 {\chi(r)}\cdot \psi^*(-N)\cdot C_{\ell,r}\cdot r^{2\lambda}
 {\displaystyle \Res_{s=1}}\,\xi_{\pm}(\beta, \psi^{*};s) 
     =\tau_{\psi}(0){\displaystyle \Res_{s=1}}\,\xi_{\pm}(\beta;s).
$$
$$
\leqno{\bullet} \quad 
 \xi_e(\beta, \psi^{*};0)
   = \tau_{\psi^*}(0) \xi_e(\beta;0).
$$
$$
\leqno{\bullet} \quad
 {\displaystyle \Res_{s=1}}\,\xi_{\pm}(\alpha,\psi;s) 
  =  {\chi(r)}\cdot \psi^*(-N)\cdot C_{\ell,r}\cdot r^{-2\lambda}
 \cdot \tau_{\psi^*}(0)\cdot {\displaystyle \Res_{s=1}}\,\xi_{\pm}(\alpha;s).
$$

\bigskip

\begin{lemma}[Converse Theorem]
 \label{corollary:Maassforms}
We assume that $\xi_\pm(\alpha;s)$ and $\xi_\pm(\beta;s)$ satisfy the  conditions
 {\rm [A1] -- [A4]}, and define $\alpha(0), \alpha(\infty), \beta(0), \beta(\infty)$ by 
$(\ref{form:DefOfA0AndAInfty})$, $(\ref{form:DefOfA0AndAInfty2})$, 
$(\ref{form:DefOfB0AndBInfty})$, $(\ref{form:DefOfB0AndBInfty2})$, 
respectively.   
We assume furthermore that, for any $r \in \mathbb{P}_N$ and $\psi \in X_r$, 
 $\xi_{\pm}(\alpha,\psi;s)$ and $\xi_{\pm}(\beta,\psi^*;s)$  satisfy the conditions $\mathrm{[A1]}_{r,\psi}$ -- $\mathrm{[A5]}_{r,\psi}$.
Define the functions $F_\alpha(z)$ and  $G_\beta(z)$ on the upper half plane $\mathcal{H}$ by 
\begin{align}
 \nonumber
 F_\alpha(z) &= \alpha(\infty) \cdot y^{\lambda-\ell/4}+
 \alpha(0)\cdot i^{-\ell/2} \cdot 
 \frac{(2\pi) 2^{1-2\lambda} \Gamma(2\lambda-1)}
 {\Gamma\left(\lambda+\frac{\ell}{4}\right)
 \Gamma\left(\lambda-\frac{\ell}{4}\right)} \cdot y^{1-\lambda-\ell/4} \\[3pt]
 \label{form:Results}
  &\quad + \sum
 \begin{Sb}
 n=-\infty \\
 n\neq 0
 \end{Sb}^{\infty} \alpha(n) \cdot 
 \frac{i^{-\ell/2}\cdot \pi^{\lambda} \cdot  |n|^{\lambda-1}}
{\Gamma\left(\lambda+\frac{\sgn(n)\ell}{4}\right)} \cdot 
 y^{-\ell/4}\, W_{\frac{\sgn(n)\ell}{4}, \lambda-\frac{1}{2}}\left(4\pi|n|y\right)
\cdot \mathbf{e}[nx],  \\
 \nonumber
G_\beta(z) &=  N^\lambda \beta(\infty) \cdot y^{\lambda-\ell/4} 
  + N^{1-\lambda}  \beta(0) \cdot i^{-\ell/2} \cdot
 \frac{(2\pi) 2^{1-2\lambda} \Gamma(2\lambda-1)}
 {\Gamma\left(\lambda+\frac{\ell}{4}\right)
 \Gamma\left(\lambda-\frac{\ell}{4}\right)} \cdot y^{1-\lambda-\ell/4} \\[3pt]
 \label{form:Results2}
  &\quad + N^{1-\lambda}  \sum
 \begin{Sb}
 n=-\infty \\
 n\neq 0
 \end{Sb}^{\infty} \beta(n) \cdot 
 \frac{i^{-\ell/2} \cdot \pi^{\lambda} \cdot  |n|^{\lambda-1}}
{\Gamma\left(\lambda+\frac{\sgn(n)\ell}{4}\right)} \cdot 
 y^{-\ell/4}\, W_{\frac{\sgn(n)\ell}{4}, \lambda-\frac{1}{2}}\left(4\pi|n|y\right)
\cdot \mathbf{e}[nx]. 
\end{align}
Here $W_{\mu, \nu}(x)$ denotes the Whittaker function.
Then $F_\alpha(z)$ (resp.\ $G_\beta(z)$) gives a Maass form 
 for ${\Gamma}_0(N)$ of weight $\frac{\ell}{2}$ with
character $\chi$ (resp.\ $\chi_{N,\ell}$), and eigenvalue $(\lambda-\ell/4)(1-\lambda-\ell/4)$, where
\begin{equation}
\chi_{N,\ell}(d)=\overline{\chi(d)}\left(\frac{N}{d}\right)^\ell.
\label{eqn:chi N ell}
\end{equation}
Moreover, 
we have 
\[
F_{\alpha}\left(-\frac{1}{Nz}\right) 
(\sqrt{N} z)^{-\ell/2} = G_{\beta}(z).
\]
\end{lemma}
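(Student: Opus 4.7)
The plan is the standard Weil converse-theorem strategy adapted to Maass forms of half-integral weight, as developed in \cite{MSSU}: verify the differential and growth conditions (ii), (iii) of Definition~\ref{defn:DefOfMaassForms} termwise, then use [A3] via Mellin inversion to prove the Fricke-type identity $F_\alpha(-1/(Nz))(\sqrt{N}z)^{-\ell/2}=G_\beta(z)$, and finally use the twisted functional equations $\mathrm{[A3]}_{r,\psi}$ together with a character-sum trick to extend the resulting invariance under $z\mapsto z+1$ and the Fricke involution to all of $\Gamma_0(N)$.

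First I would check that each Fourier piece $y^{-\ell/4}W_{\sgn(n)\ell/4,\,\lambda-1/2}(4\pi|n|y)\bfe[nx]$ is a $\Delta_{\ell/2}$-eigenfunction with eigenvalue $(\lambda-\ell/4)(1-\lambda-\ell/4)$, by direct use of the Whittaker ODE and the expression \eqref{form:Laplacian}. A quick computation with $\Delta_{\ell/2}y^s=-s(s-1+\ell/2)y^s$ shows that the two boundary monomials $y^{\lambda-\ell/4}$ and $y^{1-\lambda-\ell/4}$ share this same eigenvalue. Polynomial growth of $\alpha(n),\beta(n)$ together with the decay $W_{\kappa,\mu}(t)=O(t^{\kappa}e^{-t/2})$ give absolute, locally uniform convergence, smoothness, and moderate growth at every cusp.

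For the Fricke-type identity I would restrict $F_\alpha$ to the imaginary axis $z=iy$, take a Mellin transform in $y$, and use the standard Mellin transform of the Whittaker function to recognize that the $n>0$ and $n<0$ parts assemble into the column vector $\gamma(s)\bigl(\Xi_{+}(\alpha;s),\Xi_{-}(\alpha;s)\bigr)^{t}$, with the $e^{\pm\pi is/2}$ entries of $\gamma(s)$ arising from the two possible signs in $\bfe[niy]$. Applying [A3] converts this into the analogous expression for $G_\beta$ at the Fricke-conjugate height $1/(Ny)$. Shifting contours back to a vertical line picks up poles only at $s=0,1,1-2\lambda,2-2\lambda$; conditions [A2] and [A4] are precisely what is needed to cancel the unwanted residues, leaving exactly the four boundary terms with coefficients $\alpha(0),\alpha(\infty),\beta(0),\beta(\infty)$ dictated by \eqref{form:DefOfA0AndAInfty}--\eqref{form:DefOfB0AndBInfty2}. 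Invariance under $z\mapsto z+1$ is built into the $\bfe[nx]$ expansion.

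To extend to all of $\Gamma_0(N)$, I would repeat the same Mellin-inversion argument for each twisted series $F_{\alpha,\psi}$ built from the coefficients $\alpha(n)\tau_\psi(n)$, using $\mathrm{[A1]}_{r,\psi}$--$\mathrm{[A5]}_{r,\psi}$ to produce the transformation of $F_{\alpha,\psi}$ under $z\mapsto-1/(Nr^2 z)$, and then take a suitable character sum over $\psi\in X_r$ to recover the transformation of $F_\alpha$ itself under the matrices in $\Gamma_0(N)$ whose lower-left entry is divisible by $Nr$; the factors $C_{\ell,r}$, $\chi(r)$, $\psi^*(-N)$ collapse the resulting multiplier exactly to $\chi(d)j(\gamma,z)^{\ell/2}$ (resp.\ $\chi(d)J(\gamma,z)^{\ell}$), and the residue identities $\mathrm{[A5]}_{r,\psi}$ correctly match the boundary terms produced by Mellin inversion. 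Since $\mathbb{P}_N$ contains primes in every arithmetic progression coprime to $N$, the matrices so obtained, together with the translation $z\mapsto z+1$ and the Fricke involution $w_N$, generate $\Gamma_0(N)$. The main obstacle is not any single step but the bookkeeping required to match the Gauss-sum factors $\tau_\psi,\tau_{\psi^*}$, the Whittaker--Mellin gamma ratios, the $i^{-\ell/2}$ factors, and the signs carried by $\Sigma(\ell)$ and $\gamma(s)$, so that the final multiplier on $\Gamma_0(N)$ comes out as $\chi$ exactly; the precise, somewhat asymmetric form of the four identities in $\mathrm{[A5]}_{r,\psi}$ is forced by this matching.
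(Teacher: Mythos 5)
This lemma is not proved in the paper at all: it is imported verbatim from \cite{MSSU} (see the sentence preceding Section~1 and the Remark following the lemma), so there is no in-paper proof to compare against. Your outline is the standard Weil-type argument that \cite{MSSU} carries out, and all the ingredients are assigned their correct roles: the Whittaker ODE and the computation $\Delta_{\ell/2}y^{s}=-s(s-1+\ell/2)y^{s}$ for the eigenvalue, Mellin inversion plus {\bf [A3]} for the Fricke identity (with {\bf [A2]} merging the two residue terms into $\alpha(\infty)y^{\lambda-\ell/4}$ and {\bf [A4]} killing the spurious poles of the Gamma factors when $2\lambda\in\Z_{\geq 4}$), and the twisted equations plus $\mathrm{[A5]}_{r,\psi}$ for the general element of $\Gamma_0(N)$. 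Two points in your sketch deserve sharpening if this were to be written out. First, ``the matrices so obtained, together with $z\mapsto z+1$ and $w_N$, generate $\Gamma_0(N)$'' is not quite the right formulation ($w_N\notin\Gamma_0(N)$); the correct mechanism is Weil's lemma: given $\gamma=\left(\begin{smallmatrix}a&b\\c&d\end{smallmatrix}\right)\in\Gamma_0(N)$, multiply on both sides by powers of $T$ to replace $d$ by $d+kc$, which by the defining property of $\mathbb{P}_N$ can be taken to be a prime $r\in\mathbb{P}_N$, and then the modulus-$r$ twists give the transformation law for that representative directly; one also needs the (nontrivial, but standard) fact that the set of elements satisfying the transformation law with the given multiplier is a group. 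Second, condition (iii) of Definition~\ref{defn:DefOfMaassForms} at cusps other than $\infty$ does not follow ``termwise'' from the expansion of $F_\alpha$ alone; one must first establish automorphy and then expand $F_\alpha(Az)j(A,z)^{-\ell/2}$ at each cusp, checking that no exponentially growing Whittaker solution occurs. Neither point is a wrong turn---both are handled in \cite{MSSU}---but they are where the actual work of the proof lies.
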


\begin{remark}
In \cite{MSSU}, we proved the converse theorem 
under a weaker condition
$\lambda\not\in \frac12 -\frac12 \Z_{\geq 0}$. 
In the present paper, 
since the case of $\lambda=1$ will not be treated,
we assume 
$\lambda\not\in 1-\frac{1}{2}\Z_{\geq 0}$, 
which simplifies the description of
the converse theorem. 
\end{remark}

\section{Prehomogeneous vector spaces}
\label{section:PV}

Let $Y$ be a non-degenerate half-integral symmetric matrix
of degree $m$, and let $p$ be the number of 
positive eigenvalues of $Y$. 
{\it Throughout the present paper, we assume that 
$m\geq 5$ and $p(m-p)>0$. }
We denote by $SO(Y)$ the special orthogonal group
of $Y$ defined by
$SO(Y)=\{g\in SL_m(\C)\; |\; {}^{t}g Y g=Y\}$.
We define the representation $\rho$ of 
$G=GL_1(\C)\times SO(Y)$ on $V=\C^{m}$ by
\[
 \rho(\tilde{g})v=\rho(t, g)v  = tg v \qquad
 (\tilde{g}=(t, g)\in G, v\in V).
\]
Let $P(v)$ be the quadratic form on $V$ 
defined by
\begin{equation}
\label{form:DefOfRelInv}
P(v)=Y[v]={}^{t} v Y v,  
\end{equation}
where we use Siegel's notation. 
Then, for $\tilde{g}=(t, g)\in G$ and  $v\in V$, we have
\begin{equation}
\label{form:RelInvPv}
P(\rho(\tilde{g})v)= \chi(t, g)P(v), \quad 
\text{with} \quad
\chi(t, g)=t^2, 
\end{equation}
and $V-S$ is a single $\rho(G)$-orbit, where 
$S$ is the zero set of $P$:
\[
S=\{v\in V\; |\; P(v)=0\}.
\]
That is, $(G, \rho, V)$ is a reductive regular prehomogeneous vector space. 
(We refer to \cite{PVBook, SatoShintani} for the basics of the theory of 
prehomogeneous vector spaces.)
We identify the dual space $V^*$ of $V$ with $V$ itself via
the inner product $\langle v, v^*\rangle = {}^{t}v v^*$.
Then the dual triplet 
$(G, \rho^*, V^*)$ is given by
\[
 \rho^*(\tilde{g})v^*=\rho^*(t, g)v^*  = t^{-1} \cdot {}^{t}g^{-1} v^* \qquad
 (\tilde{g}=(t, g)\in G, v\in V^*).
\]
We define the quadratic form $P^*(v^*)$ on $V^*$ by
\begin{equation}
\label{form:DefOfPstar}
P^*(v^*)= \frac{1}{4} Y^{-1}[v^*] = \frac14 \cdot {}^{t}v^* \, Y^{-1} v^{*}.
\end{equation}
Then, for 
$\tilde{g}=(t, g)\in G$, $v^*\in V^*$, we have
\begin{equation}
P^*(\rho^*(\tilde{g})v^*)= \chi^*(t, g)P^*(v^*), \quad 
\text{with} \quad
\chi^*(t, g)=t^{-2}, 
\end{equation}
and $V-S^*$ is a single $\rho^*({G})$-oribit, where
$S^*$ is the zero set of $P^*$:
\[
S^{*}=\{v^*\in V^*\, |\, P^{*}(v^*)=0\}.
\]
%
%
For $\epsilon, \eta = \pm$, we put
\[
V_{\epsilon}=\{v\in V_{\R}\, |\, \sgn P(v) = \epsilon\}, \qquad 
V_{\eta}^*=\{v^*\in V_{\R}\, |\, \sgn P^*(v^*) = \eta\}.
\]
We denote by $dv= dv_1\cdots dv_m$ 
the Lebesgue measure on $V_{\R}$, and by
$\Sch{V_{\R}}$ the space of rapidly decreasing functions on $V_{\R}$.
Then, for $f, f^*\in \mathcal{S}(V_{\R})$ and
$\epsilon, \eta = \pm$, we define the local zeta functions
$\Phi_{\epsilon}(f; s)$ and 
$\Phi^*_{\eta}(f^*;s)$ by
\begin{equation}
\label{form:DefOfLocalZeta}
\Phi_{\epsilon}(f; s) = \int_{V_{\epsilon}} f(v)
|P(v)|^{s-\frac{m}{2}} dv, \quad 
\Phi_{\eta}^*(f^*; s) = \int_{V_{\eta}^*} f^*(v^*)
|P^*(v^*)|^{s-\frac{m}{2}} dv^{*}.
\end{equation}
For $\Re(s)>\frac{m}{2}$, the integrals
$\Phi_{\epsilon}(f; s)$ and 
$\Phi^*_{\eta}(f^*;s)$ converge absolutely,  
and as functions of $s$, 
they can be continued analytically 
to the whole $s$-plane as meromorphic functions. 
Further, we define the Fourier transform 
$\widehat{f}(v^*)$ of $f\in \mathcal{S}(V_{\R})$
by
\[
\widehat{f}(v^*) =\int_{V_{\R}}
f(v) \bfe[\langle v, v^*\rangle] dv.
\]
The following lemma is due to 
Gelfand-Shilov~\cite{GS};
a detailed proof is given in 
Kimura~\cite[\S~4.2]{PVBook}.

\begin{lemma}[Local Functional Equation]
\label{lemma:LFEfromGS}
Let $p$ be the number of positive eigenvalues of $Y$, and 
put $D=\det(2Y)$. Then the following functional equation holds: 
\begin{multline*}
\begin{pmatrix}
\Phi_{+}^*(\widehat{f}\, ; s) \\[4pt]
\Phi_{-}^*(\widehat{f}\, ; s) 
\end{pmatrix}=
\Gamma\left(s+1-\frac{m}{2}\right)\Gamma(s) |D|^{\frac{1}{2}}
\cdot 2^{-2s+\frac{m}{2}}\cdot \pi^{-2s+\frac{m}{2}-1} \\
\times
\begin{pmatrix}
\sin \pi\left(\frac{p}{2}-s\right) & 
\sin \frac{\pi p}{2} \\[4pt]
\sin \frac{\pi(m-p)}{2} & 
\sin \pi\left(\frac{m-p}{2}-s\right)
\end{pmatrix}
\begin{pmatrix}
\Phi_{+}\left(f; \frac{m}{2}-s\right)\\[4pt]
\Phi_{-}\left(f; \frac{m}{2}-s\right)
\end{pmatrix}.
\end{multline*}
\end{lemma}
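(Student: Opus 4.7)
I would follow the standard two-step strategy for local functional equations of indefinite quadratic zeta integrals: first reduce the matrix $Y$ to the split diagonal form $I_{p,q} := \mathrm{diag}(\underbrace{1,\dots,1}_p,\underbrace{-1,\dots,-1}_{m-p})$ by Sylvester's theorem, and then apply Gelfand--Shilov's classical formula for the Fourier transform of the homogeneous distributions $|I_{p,q}[\,\cdot\,]|^{s}\chi_{V_\pm}$.

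For the reduction, choose $Q \in GL_m(\R)$ with $Y = Q\, I_{p,q}\, Q^T$, so $|\det Q| = 2^{-m/2}|D|^{1/2}$. The substitutions $u = Q^T v$ on $V_\R$ and $u^* = Q^{-1}v^*$ on $V^*_\R$ send $P$ to $I_{p,q}[u]$ and $4P^*$ to $I_{p,q}[u^*]$ while preserving the pairing $\langle v,v^*\rangle = \langle u,u^*\rangle$, hence the Fourier transform up to a factor of $|\det Q|^{-1}$. A direct computation of the Jacobians, combined with the $\tfrac14$ in the definition of $P^*$, produces exactly the $|D|^{1/2}$ prefactor of the lemma, so it suffices to prove the equation for $Y = I_{p,q}$.

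For the split case, introduce bipolar coordinates $v = (r_1\omega_1, r_2\omega_2)\in \R^p\times\R^{m-p}$ (with $\omega_j$ on the unit sphere), under which $P(v) = r_1^2 - r_2^2$ and $V_\pm$ decomposes into wedge regions parameterized by $(r_1,r_2)$. After spherical averaging of $f$, this reduces $\Phi_\pm(f;s)$ to a two-variable Mellin transform, and shows that both sides extend meromorphically from the absolute convergence region $\Re s > m/2$. One then computes the Fourier transform of $|P^*|^{s-m/2}\chi_{V^*_\eta}$ using Fresnel-type identities such as
\[
\int_0^\infty r^{2s-1} e^{\pm i\pi r^2}\,dr = \tfrac12 \pi^{-s}\Gamma(s)\, e^{\pm i\pi s/2},
\]
and the distributional Parseval identity $\Phi^*_\eta(\widehat f;s) = \langle \widehat{|P^*|^{s-m/2}\chi_{V^*_\eta}},\, f\rangle$ delivers the desired functional equation. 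The $\Gamma(s+1-\tfrac{m}{2})\Gamma(s)$ factor arises from the two radial Mellin transforms; the power $2^{-2s+m/2}\pi^{-2s+m/2-1}$ from the Fourier normalization together with the $\tfrac14$ in $P^*$; and the $2\times 2$ sine matrix encodes the interference of the phases $e^{\pm i\pi p/2}$ and $e^{\pm i\pi (m-p)/2}$ coming from the two halves of the light cone for $P$ and $P^*$, respectively.

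The main technical obstacle is the last bookkeeping step: tracking signs of Jacobians, conventions for the Fourier transform (via $\mathbf{e}[\,\cdot\,]$ versus its conjugate), and the phases from the Fresnel integrals carefully enough to produce the precise sine matrix and the exact powers of $2$ and $\pi$ stated in the lemma. This computation is classical and is carried out in full detail in Kimura \cite[\S 4.2]{PVBook}, the reference cited in the statement; so in practice I would quote it rather than redo it from scratch.
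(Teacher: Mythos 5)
Your proposal is correct and ends up in the same place as the paper: the paper gives no proof of this lemma at all, simply attributing it to Gelfand--Shilov and citing Kimura~\cite[\S~4.2]{PVBook} for the details, which is exactly the reference you defer to. Your sketch of the classical argument (reduction to $\mathrm{diag}(I_p,-I_{m-p})$, bipolar coordinates, Fresnel integrals) is the standard route taken in those references and the Jacobian bookkeeping you describe, e.g.\ $|\det Q|=2^{-m/2}|D|^{1/2}$, is accurate.
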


In the rest of this section, we  investigate 
singular distributions whose supports are contained in
the real points $S_{\R}$ of $S$;
these distributions play an important role in
the calculation of residues of Siegel's zeta functions. 
We decompose $S_{\R}$ as
\[
 S_{\R}= S_{1, \R}\cup S_{2, \R}, \quad 
 S_{1, \R}=\{v\in V_{\R}\; |\; P(v)=0, v\neq 0\}, \quad
 S_{2,\R}=\{0\}.
\]
A measure on $S_{\R}$ that is $SO(Y)_{\R}$-invariant is 
constructed as follows.
Since $P(v)$ is a non-degenerate quadratic forms, we have
\[
 S_{1, \R}= \bigcup_{i=1}^{m} U_{i}, \qquad
 U_{i}= \left\{ v\in S_{1, \R}\, \bigg|\, 
 \frac{\partial}{\partial v_{i}}P(v)\neq 0\right\}.
\]
For $i=1,\dots, m$, we define
an $(m-1)$-dimensional differential form $\omega_i$ on $U_i$ by
\begin{equation}
 \label{form:DefOfOmegaI}
 \omega_i= (-1)^{i-1}
 \left(\frac{\partial}{\partial v_i} P(v)\right)^{-1} dv_1\wedge \cdots\wedge dv_{i-1}
 \wedge dv_{i+1}\wedge \cdots \wedge dv_{m}.
\end{equation}
It is easy to see that
there exists an $(m-1)$-dimensional differential form 
$\omega$ on $S_{\R}$ that satisfies
\[
 \omega|_{U_i} = \omega_i \qquad (i=1,\dots, m)
\]
and
\[
 dP(v) \wedge \omega = dv \; (=dv_1\wedge \cdots \wedge dv_m).
\]
Since $P(gv)=P(v)$ for $g\in SO(Y)_{\R}$, we have
\[
dv= dP(v)\wedge \omega(v)= dP(v)\wedge \omega(gv).
\]
Further, $\omega(tv)= t^{m-2}\omega(v)$ for $t>0$. 
Now let $|\omega(v)|_{\infty}$ denote the measure on 
$S_{1, \R}$ defined by $\omega$. Then we have
\begin{equation}
 \label{form:InvarianceOmega}
 |\omega(\rho(t,g)v)|_{\infty} = |\chi(t, g)|^{\frac{m}{2}-1} \cdot 
 |\omega(v)|_{\infty}
\end{equation}
for $g\in SO(Y)_{\R}$ and $t>0$.
Similary, for the zero set $S^*$ of $P^*$, we decompose 
the real points $S_{\R}^*$ as 
\[
 S_{\R}^{*}= S_{1, \R}^{*}\cup S_{2, \R}^{*}, \qquad 
 S_{1, \R}^*=\{v^*\in V_{\R}\; |\; P^*(v^*)=0, v^*\neq 0\}, \quad
 S_{2,\R}^*=\{0\}.
\]
The same argument as above ensures the existence of
an $(m-1)$-dimensional differential form $\omega^*$ on
$S_{1, \R}^*$ such that the restriction of $\omega^*$
on
\[
 U_i^* =\left\{ v^*\in S_{1, \R}^*\, \bigg|\, 
 \frac{\partial}{\partial v_{i}^*}P^*(v^*)\neq 0\right\}\qquad
 (i=1,\dots, m)
\]
is given by
\[
 \omega^*|_{U_i^*}= (-1)^{i-1}
 \left(\frac{\partial}{\partial v_i^*} P^*(v^*)\right)^{-1} 
 dv_1^*\wedge \cdots\wedge dv_{i-1}^*
 \wedge dv_{i+1}^*\wedge \cdots \wedge dv_{m}^{*}.
\]
We have
\begin{equation}
 \label{form:InvarianceOmegaStar}
 |\omega^*(\rho^*(t, g)v^*)|_{\infty}=|\chi(t, g)|^{1-\frac{m}{2}}
 \cdot |\omega^*(v^*)|_{\infty}
\end{equation}
for $g\in SO(Y)_{\R}$ and $t>0$, where 
$|\omega^*|_{\infty}$ denotes the measure on 
$S_{1, \R}^*$ defined by $\omega^*$. 
We refer to \cite[Chap.~III]{GS} for further details on 
the measures 
$|\omega|_{\infty}, |\omega^*|_{\infty}$.
Then we have the following 
\begin{lemma} 
\label{lemma:SingularInvariantDistributions}

\begin{enumerate}
\item 
If $f\in C_0^{\infty}(V_{\R}-S_{\R})$, then we have
\begin{multline*}
\int_{S_{1,\R}^*}\widehat{f}(v^*) |\omega^*(v^*)|_{\infty} 
=
\Gamma\left(\frac{m}{2}-1\right) |D|^{\frac{1}{2}}
\cdot 2^{2-\frac{m}{2}}\cdot \pi^{1-\frac{m}{2}} \\
\times
\begin{pmatrix}
\sin \dfrac{\pi}{2}(m-p) & 
\sin \dfrac{\pi p}{2} 
\end{pmatrix}
\begin{pmatrix}
\displaystyle
\int_{V_{+}} f(v) |P(v)|^{1-\frac{m}{2}} dv \\[5pt]
\displaystyle
\int_{V_{-}} f(v) |P(v)|^{1-\frac{m}{2}} dv \\
\end{pmatrix}.
\end{multline*}
\item 
If  $\widehat{f}\in C_0^{\infty}(V_{\R}-S_{\R}^*)$, then we have
\begin{multline*}
\int_{S_{1,\R}} f(v) |\omega(v)|_{\infty} 
=
\Gamma\left(\frac{m}{2}-1\right) |D|^{-\frac{1}{2}}
\cdot 2^{2-\frac{m}{2}}\cdot \pi^{1-\frac{m}{2}} \\
\times
\begin{pmatrix}
\sin \dfrac{\pi}{2}(m-p) & 
\sin \dfrac{\pi p}{2} 
\end{pmatrix}
\begin{pmatrix}
\displaystyle
\int_{V_{+}^*} \widehat{f}(v^*)|P^*(v^*)|^{1-\frac{m}{2}} dv^* \\[5pt]
\displaystyle
\int_{V_{-}^*} \widehat{f}(v^*)|P^*(v^*)|^{1-\frac{m}{2}} dv^* \\
\end{pmatrix}.
\end{multline*}
\end{enumerate}
\end{lemma}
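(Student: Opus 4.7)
The plan is to derive both parts from the local functional equation (Lemma~\ref{lemma:LFEfromGS}) by extracting residues at $s=\frac{m}{2}-1$; the support hypotheses on $f$ or $\widehat{f}$ will make one side of that equation entire in $s$, so the relevant residue is carried entirely by the obvious gamma factor.

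The key preliminary identity is
\[
\Res_{s=m/2-1}\Phi^{*}_{\epsilon}(\widehat{f};s)=\int_{S^{*}_{1,\R}}\widehat{f}(v^{*})\,|\omega^{*}(v^{*})|_{\infty}\qquad(\epsilon=\pm),
\]
together with its analogue obtained by replacing $(\Phi^{*},\widehat{f},\omega^{*})$ by $(\Phi,f,\omega)$. To prove it I would write $dv^{*}=dP^{*}\wedge\omega^{*}$ near $S^{*}_{1,\R}$ (as in the construction preceding the lemma) and apply the coarea formula to get
\[
\Phi^{*}_{+}(\widehat{f};s)=\int_{0}^{\infty}t^{\,s-m/2}\,g(t)\,dt,\qquad g(t)=\int_{P^{*}=t}\widehat{f}\,|\omega^{*}|_{\infty};
\]
a standard Mellin-transform argument then yields a simple pole at $s=\frac{m}{2}-1$ with residue $g(0)$. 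The sign $\epsilon=-$ is handled identically via the substitution $t=-P^{*}$, and both residues coincide with the same integral over $S^{*}_{1,\R}$.

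For part~(1) I would then specialize Lemma~\ref{lemma:LFEfromGS} at $s=\frac{m}{2}-1$. Since $f\in C_{0}^{\infty}(V_{\R}-S_{\R})$, the integrals $\Phi_{\pm}(f;\frac{m}{2}-s)$ are entire in $s$, so on the right-hand side the only pole at $s=\frac{m}{2}-1$ is the simple pole of $\Gamma(s+1-\frac{m}{2})$, of residue $1$; the factor $\Gamma(s)$ evaluates to $\Gamma(\frac{m}{2}-1)$ and $\Phi_{\pm}(f;\frac{m}{2}-s)$ to $\Phi_{\pm}(f;1)=\int_{V_{\pm}}f(v)|P(v)|^{1-m/2}\,dv$. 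A short trigonometric check, using $\sin\pi(\tfrac{p}{2}-\tfrac{m}{2}+1)=\sin\tfrac{\pi(m-p)}{2}$ and $\sin\pi(\tfrac{m-p}{2}-\tfrac{m}{2}+1)=\sin\tfrac{\pi p}{2}$, shows that at $s=\frac{m}{2}-1$ both rows of the $2\times 2$ matrix in Lemma~\ref{lemma:LFEfromGS} collapse to $\bigl(\sin\tfrac{\pi(m-p)}{2},\ \sin\tfrac{\pi p}{2}\bigr)$. Combined with the preliminary identity this gives assertion~(1).

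For part~(2) I would derive an ``inverse'' local functional equation by applying Lemma~\ref{lemma:LFEfromGS} to the dual prehomogeneous vector space $(G,\rho^{*},V^{*})$, whose relative invariant $P^{*}$ has the same signature $(p,m-p)$ and discriminant $1/D$. Fourier inversion $\widehat{\widehat{f}}(v)=f(-v)$ together with the invariance of $|P|$ and of $V_{\pm}$ under $v\mapsto-v$ identifies $\Phi^{**}_{\pm}(\widehat{\widehat{f}};s)$ with $\Phi_{\pm}(f;s)$, so the dual equation expresses $\Phi_{\pm}(f;s)$ in terms of $\Phi_{\pm}^{*}(\widehat{f};\frac{m}{2}-s)$ with $|D|^{1/2}$ replaced by $|D|^{-1/2}$ and the same matrix. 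Under the hypothesis $\widehat{f}\in C_{0}^{\infty}(V_{\R}-S_{\R}^{*})$ it is now $\Phi_{\pm}^{*}(\widehat{f};\frac{m}{2}-s)$ that is entire, and the same residue extraction at $s=\frac{m}{2}-1$ yields~(2). The main technical point, shared by both parts, is justifying the smoothness of the leaf integral $g(t)$ at $t=0$ needed in Step~1; this rests on the non-degeneracy of $P^{*}$ on $S^{*}_{1,\R}$ together with the fact, already recorded in~\eqref{form:InvarianceOmega}--\eqref{form:InvarianceOmegaStar}, that $|\omega|_{\infty}$ and $|\omega^{*}|_{\infty}$ define honest $SO(Y)_{\R}$-invariant measures on the respective punctured null cones.
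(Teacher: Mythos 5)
Your proposal is correct and follows essentially the same route as the paper: the paper likewise realizes $\int_{S_{1,\R}^*}\widehat{f}\,|\omega^*|_{\infty}$ as $\lim_{s\to m/2}(s-\tfrac{m}{2})\int_{P^*>0}P^{*\,s-m/2-1}\widehat{f}\,dv^*$ (via two integrations by parts in the coordinate $P^*$, which is your Mellin-residue identity) and then extracts the pole of the gamma factor in Lemma~\ref{lemma:LFEfromGS}, with the same trigonometric collapse of the matrix rows. For part (2) the paper only says ``similar fashion,'' and your explicit dualization via $\widehat{\widehat{f}}(v)=f(-v)$ and the discriminant $1/D$ of $P^*$ is a correct way to fill that in.
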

This is stated, {\it without proof},  on p.\ 156 of  Sato-Shintani~\cite{SatoShintani} where
Siegel's zeta function is picked up as an example of their theory. 
Since the details cannot be found in other literature, 
we give a proof of the lemma 
for convenience of readers.

\begin{proof}
For $f\in C_0^{\infty}(V_{\R}-S_{\R})$, we consider the integral
\[
 \int_{S_{1,\R}^*}\widehat{f}(v^*) |\omega^*(v^*)|_{\infty}.
\]
We may replace $S_{1,\R}^*$  by
$S_{\R}^*=\{v^*\in V_{\R}\,|\, P^*(v^*)=0\}$, 
since $S_{2,\R}^*=\{0\}$ has measure $0$ in $S_{\R}^*$.
From the identity (19) (or the first formula on p.\ 257) in 
Gelfand-Shilov~\cite[Chap~III, \S 2.2]{GS}, we have
\[
 \int_{S_{\R}^*}\widehat{f}(v^*) |\omega^*(v^*)|_{\infty} =
\Res_{s=0} \int_{P^*(v^*)>0} P^*(v^*)^{s-1}\cdot \widehat{f}(v^*)dv^*.
\]
By the shift $s\mapsto s-\frac{m}{2}$, we have 
\begin{align*}
 \int_{S_{\R}^*}\widehat{f}(v^*) |\omega^*(v^*)|_{\infty} 
&=\Res_{s=\frac{m}{2}} \int_{V_{+}^*} P^{*}(v^*)^{s-\frac{m}{2}-1} \cdot \widehat{f}(v^*) dv^*\\
&= \lim_{s\to +\frac{m}{2}} \left(s-\frac{m}{2}\right) \Phi_+^{*}\left(\widehat{f}; s-1\right).
\end{align*}
It then follows from the local functional equation
(Lemma~\ref{lemma:LFEfromGS}) that 
\begin{align*}
\lim_{s\to +\frac{m}{2}} \left(s-\frac{m}{2}\right) \Phi_+^{*}\left(\widehat{f}; s-1\right)
&=
\lim_{s\to +\frac{m}{2}} \left(s-\frac{m}{2}\right)
\Gamma\left(s-\frac{m}{2}\right)\Gamma(s-1) |D|^{\frac{1}{2}}
\cdot 2^{-2s+\frac{m}{2}+2}\cdot \pi^{-2s+\frac{m}{2}+1} \\
&\qquad \times
\begin{pmatrix}
\sin \pi\left(\dfrac{p}{2}-s+1\right) & 
\sin \dfrac{\pi p}{2} 
\end{pmatrix}
\begin{pmatrix}
\displaystyle
\int_{V_{+}} f(v) |P(v)|^{1-s} dv \\[5pt]
\displaystyle
\int_{V_{-}} f(v) |P(v)|^{1-s} dv \\
\end{pmatrix} \\
&=\Gamma\left(\frac{m}{2}-1\right) |D|^{\frac{1}{2}}
\cdot 2^{2-\frac{m}{2}}\cdot \pi^{1-\frac{m}{2}} \\
&\qquad \times
\begin{pmatrix}
\sin \dfrac{\pi}{2}(m-p) & 
\sin \dfrac{\pi p}{2} 
\end{pmatrix}
\begin{pmatrix}
\displaystyle
\int_{V_{+}} f(v) |P(v)|^{1-\frac{m}{2}} dv \\[5pt]
\displaystyle
\int_{V_{-}} f(v) |P(v)|^{1-\frac{m}{2}} dv \\
\end{pmatrix},
\end{align*}
which proves 
the first assertion of 
Lemma~\ref{lemma:SingularInvariantDistributions}.
The second assertion can be proved in a similar fashion.
\end{proof}

\section{Siegel's zeta functions
with congruence conditions}
\label{section:SiegelZeta}

In this section, followig M.\ Sato-Shintani~\cite{SatoShintani}, 
we define Siegel's zeta functions associated with
$(G, \rho, V)$, and give their integral representations.
Moreover, we calculate the singular parts of the zeta integrals.  
For this calculation, we also refer to Kimura~\cite{PVBook}.
Furthermore, 
following F.\ Sato~\cite{SatoZetaDist}, 
we slightly generalize
Siegel's zeta functions   
with using Schwartz-Bruhat functions
on $\Q^m$ in order to treat the twisted zeta functions simultaneously.
Let $dx$ be the measure on $GL_{m}(\R)$ defined by
\[
 dx = |\det x|^{-m} \prod_{1\leq i,j\leq m} dx_{ij} \qquad 
 \text{for} \quad x = (x_{ij})\in GL_{m}(\R),
\]
and $d\lambda$ the measure on the space 
$\text{Sym}_{m}(\R)$ of symmetric matrices
of degree $m$ defined by 
\[
 d\lambda = |\det \lambda|^{-\frac{m+1}{2}} 
 \prod_{1\leq i\leq j\leq m} d\lambda_{ij} \qquad 
 \text{for} \quad \lambda = (\lambda_{ij})\in \text{Sym}_{m}(\R).
\]
Then we normalize a Haar measure $d^{1}g$ on 
the Lie group $SO(Y)_{\R}$ in such a way that 
the integration formula
\begin{equation}
\int_{GL_m(\R)} F(x)dx =
\int_{SO(Y)_{\R}\backslash GL_m(\R)} d\lambda({}^{t}\dot{x} Y \dot{x})
\int_{SO(Y)_{\R}}F(g\dot{x}) d^{1}g
\end{equation}
holds for all integrable functions $F(x)\in L^{1}(GL_{m}(\R))$.
Further, let $dt$ be the Lebesgue measure on $\R$ and 
put
\begin{equation}
 \label{form:DtimesT}
d^{\times}t =\dfrac{2 dt}{t}. 
\end{equation}
By \eqref{form:RelInvPv}, $|P(v)|^{-\frac{m}{2}} dv$ is an
$\R_{+}\times SO(Y)_{\R}$-invariant measure on $V_{\epsilon}$ and 
the isotropy subgroup
\[
SO(Y)_{v} =\{g\in SO(Y)\, |\, gv = v\}
\]
at $v\in V-S$ is a reductive algebraic group.
Hence, for $v\in V_{\epsilon}$, there exists a 
Haar measure $d\mu_{v}$ on $SO(Y)_{v, \R}$
such that the integration formula
\begin{multline}
\label{form:NormalizationGv}
 \int_0^{\infty} d^{\times}t\int_{SO(Y)_{\R}} H(t, g)  d^{1} g \\
 =
 \int_{0}^{\infty} \int_{SO(Y)_{\R}/SO(Y)_{v, \R}}
 |P(\rho(t, \dot{g})v)|^{-\frac{m}{2}} d(\rho(t, \dot{g})v) 
 \int_{SO(Y)_{v, \R}} H(t, \dot{g}h) d\mu_{v}(h)
\end{multline}
holds for all integrable functions $H(t, g)\in L^{1}(G_{\R})$.
Similarly, for $v^*\in V_{\eta}^*$, we write
\[
SO(Y)_{v^*, \R}=\{g\in SO(Y)_{\R}\, |\, {}^{t}g^{-1} v^*= v^*\}
\]
and fix a Haar measure $d\mu_{v^*}^*$ on $SO(Y)_{v^*, \R}$
such that the integration formula
\begin{multline}
\label{form:NormalizationGvStar}
 \int_0^{\infty} d^{\times}t \int_{SO(Y)_{\R}} H(t, g)  d^{1} g \\
 =
 \int_{0}^{\infty} \int_{SO(Y)_{\R}/SO(Y)_{v^*, \R}}
 |P^*(\rho^*(t, \dot{g})v^*)|^{-\frac{m}{2}} d(\rho^*(t, \dot{g})v^*) 
 \int_{SO(Y)_{v^*, \R}} H(t, \dot{g}h) d\mu_{v^*}^*(h)
\end{multline}
holds for all integrable functions
$H(t, g)\in L^{1}(G_{\R})$.

We call a function $\phi:V_{\Q}\to\C$ a
{\it Schwartz-Bruhat} function if the following two conditions are 
satisfied:
\begin{enumerate}
\item there exists a positive integer $M$ such that 
$\phi(v)=0$ for $v\not\in \frac{1}{M}V_{\Z}$, and 
\item there exists a positive integer $N$ such that
if $v, w\in V_{\Q}$
satisfy $v-w\in NV_{\Z}$. then
$\phi(v)=\phi(w)$.
\end{enumerate}
The totality of Schwartz-functions on $V_{\Q}$ is denoted
by $\Sch{V_{\Q}}$.
We define the Fourier transform
$\widehat{\phi}\in \mathcal{S}(V_{\Q})$ of a 
Schwartz-Bruhat function
$\phi\in \mathcal{S}(V_{\Q})$ by
\begin{equation}
\label{form:FourierTransformOverQ}
 \widehat{\phi}(v^*)=\frac{1}{[V_{\Z}: rV_{\Z}]}
 \sum_{v\in V_{\Q}/ r V_{\Z}} \phi(v) \bfe[-\langle v, v^*\rangle],
\end{equation}
where $r$ is a sufficiently large positive integer such that 
the value $\phi(v)\bfe[-\langle v, v^*\rangle]$
depends only on the residue class $v\bmod{r V_{\Z}}$.
Though $r$ is not unique, the value $\widehat{\phi}(v^*)$
does not depend on the choice  of $r$.
The following lemma is essentially an adelic version of Poisson summation formula. 
\begin{lemma}[Poisson summation formula]
For 
$\phi\in \mathcal{S}(V_{\Q})$ and $f\in \mathcal{S}(V_{\R})$, 
\[
 \sum_{v^*\in V_{\Q}} \widehat{\phi}(v^*) \widehat{f}(v^*)
 = \sum_{v\in V_{\Q}} \phi(v) f(v).
\]
\end{lemma}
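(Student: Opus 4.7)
The plan is to reduce to a single coset by linearity and then apply the classical Poisson summation formula on $V_\R$. Since $\phi\in\mathcal{S}(V_\Q)$ vanishes outside $\frac{1}{M}V_\Z$ and is constant on cosets of $NV_\Z$, it decomposes as the finite sum
\[
\phi = \sum_{c\in\frac{1}{M}V_\Z/NV_\Z}\phi(c)\cdot \mathbf{1}_{c+NV_\Z}.
\]
Both sides of the claimed identity are linear in $\phi$ (the right-hand side through $\widehat{\phi}$), so it suffices to establish the identity when $\phi=\mathbf{1}_{c+NV_\Z}$ for a single $c\in\frac{1}{M}V_\Z$.

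For such a characteristic function, I would compute $\widehat{\phi}(v^*)$ directly from \eqref{form:FourierTransformOverQ} at an arbitrary $v^*\in V_\Q$ by taking the auxiliary integer in the form $r=Nk$, where $k$ is any positive integer making $rv^*\in V_\Z$. The coset $c+NV_\Z$ breaks into $k^m$ cosets of $rV_\Z$, indexed by $V_\Z/kV_\Z$. After factoring out $\mathbf{e}[-\langle c,v^*\rangle]$, the remaining sum becomes a character sum on the finite group $V_\Z/kV_\Z$, and orthogonality yields
\[
\widehat{\phi}(v^*) = \begin{cases} N^{-m}\,\mathbf{e}[-\langle c,v^*\rangle] & (v^*\in \tfrac{1}{N}V_\Z),\\[2pt] 0 & (\text{otherwise}). \end{cases}
\]
This formula also verifies that $\widehat{\phi}(v^*)$ is independent of the choice of $r$, as required for the definition to be consistent.

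With $\widehat{\phi}$ in hand, the right-hand side of the lemma becomes
\[
\sum_{v^*\in V_\Q}\widehat{\phi}(v^*)\widehat{f}(v^*) = \frac{1}{N^m}\sum_{v^*\in\frac{1}{N}V_\Z}\mathbf{e}[-\langle c,v^*\rangle]\,\widehat{f}(v^*),
\]
while the left-hand side equals $\sum_{w\in NV_\Z} f(c+w)$. These two quantities are identified by the classical Poisson summation formula on $V_\R$ applied to the Schwartz function $v\mapsto f(c+v)$ and the lattice $NV_\Z\subset V_\R$, whose dual lattice under the pairing $\langle\cdot,\cdot\rangle$ is $\frac{1}{N}V_\Z$ and whose covolume is $N^m$; the translation by $c$ is precisely what produces the exponential factor $\mathbf{e}[-\langle c,v^*\rangle]$ on the Fourier side. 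No substantive obstacle is anticipated: the only nontrivial ingredient beyond classical Poisson is the character-sum computation of $\widehat{\phi}$, which is routine finite-group orthogonality.
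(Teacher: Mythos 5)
Your proof is correct. The paper gives no proof of this lemma at all (it merely remarks that it is ``essentially an adelic version of Poisson summation formula''), and your argument --- decomposing $\phi$ as a finite linear combination of characteristic functions of cosets $c+NV_{\Z}$, evaluating $\widehat{\phi}$ for such a characteristic function by orthogonality on $V_{\Z}/kV_{\Z}$ to get $N^{-m}\mathbf{e}[-\langle c,v^*\rangle]$ supported on $\frac{1}{N}V_{\Z}$, and then applying classical Poisson summation to $v\mapsto f(c+v)$ over the lattice $NV_{\Z}$ with dual $\frac{1}{N}V_{\Z}$ and covolume $N^m$ --- is exactly the standard argument the paper is implicitly invoking. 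The only blemish is cosmetic: you call the $\widehat{\phi}\,\widehat{f}$ sum the right-hand side, whereas it is the left-hand side as the lemma is stated.
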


For 
$\tilde{g}=(t, g)\in G_{\R}=\R^{\times}\times SO(Y)_{\R}$, we put
$f_{\tilde{g}}(v)=f(\rho(t, g)v)=f(tgv)$. Since
\[
 \widehat{f_{\tilde{g}}}(v^*)=\int_{V_{\R}} f(tgv) \bfe[\langle v, v^*\rangle]dv=
 |t|^{-m} \int_{V_{\R}} f(v) \bfe[\langle t^{-1} g^{-1}v, v^*\rangle] dv
 =|t|^{-m} \cdot \widehat{f}(\rho^*(t, g)v^*),
\]
we have the following

\begin{lemma}
\label{lemma:PoissonTwistRhoG}
For $\tilde{g}=(t, g)\in G_{\R}$, 
$\phi\in \mathcal{S}(V_{\Q})$, $f\in \mathcal{S}(V_{\R})$, 
\[
 |t|^{-m} \sum_{v^*\in V_{\Q}} \widehat{\phi}(v^*) \widehat{f}(\rho^*(t, g)v^*)
 = \sum_{v\in V_{\Q}} \phi(v) f(\rho(t, g)v).
\]
\end{lemma}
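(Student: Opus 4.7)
The plan is to derive this lemma as an immediate corollary of the preceding (ordinary) Poisson summation formula, combined with the Fourier transform computation displayed in the paragraph just above the statement. In other words, the factor $|t|^{-m}$ and the twist by $\rho^*(t,g)$ on the spectral side of the summation appear exactly as the Fourier transform of the twisted test function $f_{\tilde{g}}$ on the spatial side.

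First, I would observe that $f_{\tilde{g}}(v) = f(\rho(t,g)v)$ lies in $\Sch{V_{\R}}$, since $\rho(t,g)$ is an invertible $\R$-linear map and composition with such a map preserves the Schwartz space. Hence the previous Poisson summation lemma applies to the pair $\phi \in \Sch{V_{\Q}}$ and $f_{\tilde{g}} \in \Sch{V_{\R}}$, yielding
\[
\sum_{v^* \in V_{\Q}} \widehat{\phi}(v^*)\, \widehat{f_{\tilde{g}}}(v^*) \;=\; \sum_{v \in V_{\Q}} \phi(v)\, f_{\tilde{g}}(v) \;=\; \sum_{v \in V_{\Q}} \phi(v)\, f(\rho(t,g)v).
\]

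Next, I would invoke the identity
\[
\widehat{f_{\tilde{g}}}(v^*) \;=\; |t|^{-m}\,\widehat{f}(\rho^*(t,g)v^*),
\]
which was verified in the displayed calculation immediately preceding the statement (change variables $w = tgv$; the Jacobian contributes $|t|^{-m}$ because $g \in SO(Y)$ has determinant $1$, and the contragredient action on the character is precisely $\rho^*(t,g)$). Substituting this into the left-hand side of the Poisson identity and pulling the constant $|t|^{-m}$ outside the sum gives
\[
|t|^{-m}\sum_{v^* \in V_{\Q}} \widehat{\phi}(v^*)\,\widehat{f}(\rho^*(t,g)v^*) \;=\; \sum_{v \in V_{\Q}} \phi(v)\, f(\rho(t,g)v),
\]
which is the claimed formula.

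There is essentially no hard step: the only subtlety worth flagging is consistency of the Fourier transform conventions between $\Sch{V_{\R}}$ and $\Sch{V_{\Q}}$, since the preceding Poisson formula mixes the two, but this is already implicit in the statement of the Poisson summation lemma used. The remainder is formal manipulation.
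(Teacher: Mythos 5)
Your proposal is correct and is precisely the argument the paper gives: the identity $\widehat{f_{\tilde{g}}}(v^*)=|t|^{-m}\widehat{f}(\rho^*(t,g)v^*)$ is exactly the displayed computation preceding the lemma, and the paper then deduces the lemma by applying the Poisson summation formula to the pair $(\phi, f_{\tilde{g}})$. Nothing further is needed.
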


In the following, we assume that 
$\phi\in  \mathcal{S}(V_{\Q})$ is $SO(Y)_{\Z}$-invariant.
That is, $\phi$ is assumed to satisfy
\begin{equation}
 \label{form:PhiIsSOYZinv}
 \phi(\gamma v) =\phi(v) \qquad \text{for}\quad
 v\in V_{\Q}, \gamma\in SO(Y)_{\Z}.
\end{equation}
Then we define the zeta integral
$Z(f, \phi; s)$ by
\begin{equation}
\label{form:DefofZetaInt}
 Z(f, \phi; s)= \int_{0}^{\infty} d^{\times} t\int_{SO(Y)_{\R}/SO(Y)_{\Z}}
 |\chi(t, g)|^{s} \sum_{v\in V_{\Q}-S_{\Q}} \phi(v) f(\rho(t, g)v) d^{1}g.
\end{equation}
Since $V_{\Q}-S_{\Q}$ can be decomposed as
\[
 V_{\Q}-S_{\Q} = \bigcup_{\epsilon=\pm} \bigcup_{v\in SO(Y)_{\Z}\backslash
 V_{\epsilon}\cap V_{\Q}} 
 \bigcup_{\gamma\in SO(Y)_{\Z}/SO(Y)_{v, \Z}} \gamma v,
\]
we have, by a formal calculation, 
\begin{align*}
&Z(f,  \phi; s) \\
&=\sum_{\epsilon=\pm}\sum_{v\in SO(Y)_{\Z}\backslash
 V_{\epsilon}\cap V_{\Q}}\int_{0}^{\infty}  d^{\times} t
 \int_{SO(Y)_{\R}/SO(Y)_{\Z}} |\chi(t, g)|^{s}
 \sum_{\gamma\in SO(Y)_{\Z}/SO(Y)_{v, \Z}}
 \phi(\gamma v) f(\rho(t, g)\gamma v) d^{1}g \\
 &\overset{\eqref{form:PhiIsSOYZinv}}{=}
 \sum_{\epsilon=\pm}\sum_{v\in SO(Y)_{\Z}\backslash
 V_{\epsilon}\cap V_{\Q}} \phi(v) \int_{0}^{\infty} d^{\times} t
 \int_{SO(Y)_{\R}/SO(Y)_{v, \Z}} |\chi(t,g)|^{s}
  f(\rho(t, g)v) d^{1}g,
\end{align*}
and further, by applying \eqref{form:NormalizationGv} to 
\[
H(t, g)=|\chi(t,g)|^{s}\cdot f(\rho(t, g)v) =\frac{|P(\rho(t, g)v)|^{s}}{|P(v)|^s}\cdot 
f(\rho(t, g)v),
\]
we have
\begin{multline*}
Z(f, \phi; s) 
=\sum_{\epsilon=\pm}\sum_{v\in SO(Y)_{\Z}\backslash
 V_{\epsilon}\cap V_{\Q}} \phi(v)\\
 \times \int_{0}^{\infty}  d^{\times} t
 \int_{SO(Y)_{\R}/SO(Y)_{v, \R}}
 \frac{|P(\rho(t, \dot{g})v)|^{s}}{|P(v)|^s}\cdot 
f(\rho(t, \dot{g})v) |P(\rho(t, \dot{g})v|^{-\frac{m}{2}} d(\rho(t, \dot{g})v)\\
\times
\int_{SO(Y)_{v, \R}/SO_{v, \Z}} d\mu_{v}(h).
\end{multline*}
In the following, for $v\in V_{\Q}-S_{\Q}$, we put
\begin{equation}
\label{form:DefOfMuV}
\mu(v)=\int_{SO(Y)_{v, \R}/SO_{v, \Z}} d\mu_{v}(h).
\end{equation}
Since it is assumed that $m\geq 5$,
 the generic isotropy subgroup $SO(Y)_{v}$ is a semisimple algebraic group, 
 and thus we have $\mu(v)<+\infty$. (cf.\ \cite[p.\ 184]{PVBook}.)
We further put $\rho(t, \dot{g})v=x$ in the right hand side above. 
Then, since $\R_{+}\times SO(Y)_{\R}/SO(Y)_{v, \R}\cong V_{\epsilon}$, we have
\begin{equation}
\label{form:ZetaIntUnfolding}
Z(f, \phi; s) 
=\sum_{\epsilon=\pm}\left\{\sum_{v\in SO(Y)_{\Z}\backslash
 V_{\epsilon}\cap V_{\Q}} \frac{\phi(v)\mu(v)}{|P(v)|^s}\right\} 
 \int_{V_{\epsilon}}
 f(x) |P(x)|^{s-\frac{m}{2}} dx.
\end{equation}
The Dirichlet series
\[
\zeta_{\epsilon}(\phi; s)=
\sum_{v\in SO(Y)_{\Z}\backslash
 V_{\epsilon}\cap V_{\Q}} \frac{\phi(v)\mu(v)}{|P(v)|^s}
\]
converges absolutely for $\Re(s)> \frac{m}{2}$, 
as will be explained in Remark~\ref{remark:SiegelZeta} shortly.
Hence the interchange of summation and integration, which leads to
\eqref{form:ZetaIntUnfolding}, can be justified under this condition. 
Similary, for 
$f^*\in \mathcal{S}(V_{\R})$ and 
$\phi^*\in \mathcal{S}(V_{\Q})$ that satisfies
\begin{equation}
\label{form:PhiStarIsSOYZinv}
 \phi^*({}^{t}\gamma^{-1} v^*) =\phi^*(v^*) \qquad \text{for}\quad
 v^*\in V_{\Q}, \gamma\in SO(Y)_{\Z},
\end{equation}
we define the zeta ingegral $Z^*(f^*, \phi^*;s)$ by
\begin{equation}
\label{form:DefofDualZetaInt}
 Z^*(f^*, \phi^*; s)= 
 \int_{0}^{\infty} d^{\times} t\int_{SO(Y)_{\R}/SO(Y)_{\Z}}
 |\chi^*(t, g)|^{s} \sum_{v^*\in V_{\Q}-S_{\Q}^*} 
 \phi^*(v^*) f^*(\rho^*(t, g)v^*) d^{1}g.
\end{equation}
Furthermore, for $v^*\in V_{\Q}-S_{\Q}^*$, we put
\begin{equation}
\label{form:DefOfMustarVstar}
\mu^*(v^*)=\int_{SO(Y)_{v^*, \R}/SO_{v^*, \Z}} d\mu_{v^*}^*(h),
\end{equation}
where $d\mu_{v^*}^*$ is the  Haar measure on 
$SO(Y)_{v^*, \R}$ defined by 
\eqref{form:NormalizationGvStar}.

\begin{definition}[Siegel's zeta functions with congruence conditions]
Let $\epsilon, \eta=\pm$ and assume that 
$\phi, \phi^*\in \mathcal{S}(V_{\Q})$ satisfy 
\eqref{form:PhiIsSOYZinv}, \eqref{form:PhiStarIsSOYZinv}, respectively. 
Then we define 
$\zeta_{\epsilon}(\phi; s)$ and $\zeta_{\eta}^*(\phi^*; s)$ by
\begin{align}
\label{form:DefOfSiegelZeta}
\zeta_{\epsilon}(\phi; s) &=
\sum_{v\in SO(Y)_{\Z}\backslash
 V_{\epsilon}\cap V_{\Q}} \frac{\phi(v)\mu(v)}{|P(v)|^s} \\[5pt]
 \label{form:DefOfDualSiegelZeta}
\zeta_{\eta}^*(\phi^*; s) &=
\sum_{v^*\in SO(Y)_{\Z}\backslash
 V_{\eta}^*\cap V_{\Q}} \frac{\phi^*(v^*)\mu^*(v^*)}{|P^*(v^*)|^s}.
\end{align}
\end{definition}

We can summarize our argument as the following

\begin{lemma}[Integral representations of the zeta functions]
\label{lemma:IntegralRepnOfZeta}
Let $f, f^*\in \mathcal{S}(V_{\R})$ and assume that 
$\phi, \phi^*\in \mathcal{S}(V_{\Q})$ are $SO(Y)_{\Z}$-invariant. 
For $\Re(s)> \frac{m}{2}$, we have 
\begin{align*}
 Z(f, \phi; s)&= \sum_{\epsilon=\pm}
 \zeta_{\epsilon}(\phi;s) \Phi_{\epsilon}(f; s), \\
Z^*(f^*, \phi^*; s) &= \sum_{\eta=\pm}
 \zeta_{\eta}^*(\phi^*;s) \Phi_{\eta}^*(f^*; s).
\end{align*}
\end{lemma}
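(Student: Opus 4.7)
My plan is to prove the first identity and note that the second follows by an identical argument with the dual data $(f^*, \phi^*, \rho^*, P^*)$ in place of $(f, \phi, \rho, P)$, using \eqref{form:NormalizationGvStar} in place of \eqref{form:NormalizationGv}.

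First, I would start from the defining expression \eqref{form:DefofZetaInt} of $Z(f,\phi;s)$ and decompose the sum over $V_{\Q}-S_{\Q}$ into $SO(Y)_{\Z}$-orbits. Since $P(v)\neq 0$ on the complement of $S_{\Q}$, the sign of $P$ partitions the rational orbits according to $\epsilon = \pm$, giving the orbit decomposition written in the body of the text. Using the $SO(Y)_{\Z}$-invariance \eqref{form:PhiIsSOYZinv} of $\phi$, I would pull $\phi(v)$ out of the inner sum over $\gamma \in SO(Y)_{\Z}/SO(Y)_{v,\Z}$ and absorb that quotient into the region of integration, producing for each orbit representative $v$ an integral over $SO(Y)_{\R}/SO(Y)_{v,\Z}$ of $|\chi(t,g)|^s f(\rho(t,g)v)$ against $d^{\times}t\,d^1 g$.

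Next, using \eqref{form:RelInvPv} to write $|\chi(t,g)|^s = |P(\rho(t,g)v)|^s / |P(v)|^s$, I would apply the integration formula \eqref{form:NormalizationGv} with the choice $H(t,g) = (|P(\rho(t,g)v)|^s/|P(v)|^s) f(\rho(t,g)v)$. This factors the inner integral as a product of an integral over $SO(Y)_{\R}/SO(Y)_{v,\R}$ against the measure $|P(\rho(t,\dot g)v)|^{-m/2}\,d(\rho(t,\dot g)v)$ and an integral over $SO(Y)_{v,\R}/SO(Y)_{v,\Z}$, which by \eqref{form:DefOfMuV} contributes the factor $\mu(v)$. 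Because $\R_{+}\times SO(Y)_{\R}/SO(Y)_{v,\R} \cong V_{\epsilon}$, I would substitute $x = \rho(t,\dot g)v$ in the remaining integral; the Jacobian is absorbed into the invariant measure, and what survives is $\int_{V_{\epsilon}} f(x)|P(x)|^{s-m/2}\,dx = \Phi_{\epsilon}(f;s)$. The coefficient outside is exactly $\sum_{v\in SO(Y)_{\Z}\backslash V_{\epsilon}\cap V_{\Q}} \phi(v)\mu(v)|P(v)|^{-s} = \zeta_{\epsilon}(\phi;s)$, which proves the identity.

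The main obstacle is justifying the formal manipulations — exchanging the sum over $V_{\Q}-S_{\Q}$ with the $t$- and $g$-integrations, and invoking Fubini's theorem to rearrange. This is where the hypothesis $\Re(s) > m/2$ is essential: by the cited results of Saito, F.~Sato, and Tamagawa, $\zeta_{\epsilon}(\phi;s)$ converges absolutely on this half-plane, while the rapid decay of $f$ makes $\Phi_{\epsilon}(f;s)$ absolutely convergent there as well; together these give absolute convergence of the iterated integral and sum, legitimizing the interchanges. Once the first identity is established, the second is obtained by running the same argument with $\rho$, $P$, $d\mu_v$, $\mu(v)$ replaced respectively by $\rho^*$, $P^*$, $d\mu^*_{v^*}$, $\mu^*(v^*)$, and invoking \eqref{form:NormalizationGvStar} and \eqref{form:PhiStarIsSOYZinv} in place of their unstarred counterparts.
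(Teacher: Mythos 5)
Your proposal follows the paper's own derivation essentially step for step: the same orbit decomposition of $V_{\Q}-S_{\Q}$, the same choice of $H(t,g)=|\chi(t,g)|^{s}f(\rho(t,g)v)=\frac{|P(\rho(t,\dot{g})v)|^{s}}{|P(v)|^{s}}f(\rho(t,g)v)$ fed into \eqref{form:NormalizationGv}, the identification $\R_{+}\times SO(Y)_{\R}/SO(Y)_{v,\R}\cong V_{\epsilon}$ yielding $\Phi_{\epsilon}(f;s)$, and the justification of the interchange via the absolute convergence of $\zeta_{\epsilon}(\phi;s)$ for $\Re(s)>\frac{m}{2}$ cited from Saito, F.~Sato, and Tamagawa. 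The argument is correct and matches the paper's proof.
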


\begin{remark}
\label{remark:SiegelZeta}
\begin{enumerate}
\item 
The original Siegel's zeta functions are obtained by letting 
$\phi=\phi_0$, where $\phi_0$ is the characteristic function 
$\text{ch}_{V_{\Z}}$ of $V_{\Z}$.
To apply Weil-type converse theorems, we need to examine the case where
$\phi(v)= \psi(P(v)) \phi_0(v)$ with Dirichlet character $\psi$.
Since each $\phi(v)$ is a linear combination of characteristic functions 
of subsets of the form $a + NV_{\Z} \; (a\in V_{\Q}, N\in \Z_{\geq 1})$, 
we call  $\zeta_{\epsilon}(\phi; s)$, $\zeta_{\eta}^*(\phi^*; s)$
{\it Siegel's zeta functions with congruence conditions.}  
\item 
The absolute convergence of Siegel's zeta functions is not at all obvious, 
though Siegel wrote just 
``Die Konvergents der Reihe entnimmt man der Reduktiontheorie''.
A detailed proof of the convergence can be found in 
Tamagawa~\cite{Tamagawa}. It also follows from 
the general theory of prehomogeneous vector spaces
(Saito~\cite{Saito}, F.\ Sato~\cite{SatoConv}).
\item
We can write $\zeta_{\pm}(\phi;s)$ as 
\[
 \zeta_{\pm}(\phi;s) =
 \sum_{r\in \Q_{>0}} \frac{M(P, \phi; \pm r)}{r^s}
\]
with
\[
 M(P, \phi; \pm r):=
 \sum
 \begin{Sb}
 v\in SO(Y)_{\Z}\backslash V_{\pm}\cap
 \supp(\phi) \\
 P(v) =\pm r
 \end{Sb} \phi(v) \mu(v).
\]
Since $\phi(v)= 0$ for $v\not\in \frac{1}{L}V_{\Z}$
with some integer $L$, we see that 
that the sum in the definition of
$M(P, \phi;\pm r)$ is a finite sum
(cf.\ Kimura~\cite[p.184]{PVBook}).
In the case of $\phi= \phi_0$, we have $\supp(\phi_0)=V_{\Z}$ and
$P(v)\in \Z\setminus\{0\}$ for $v\in V_{\pm}\cap V_{\Z}$.
For $n=1,2,\dots$, we put
\begin{equation}
\label{form:DefOfMPn}
M(P;  \pm n) = 
\sum
\begin{Sb}
v\in SO(Y)_{\Z}\backslash V_{\pm}\cap V_{\Z} \\
P(v)= \pm n
\end{Sb} \mu(v).
\end{equation}
Siegel called $M(P; n)$ 
the measures of representation ({\it Darstellungsma\ss{}}). 
We have
\[
 \zeta_{\pm}(\phi_0;s) =
\sum_{n=1}^{\infty} \frac{M(P; \pm n)}{n^s}.
\]
\end{enumerate}
\end{remark}
To investigate analytic properties of the zeta integrals, 
we define measures on isotropy subgroups at singular points.
We fix an arbitrary point $v$ of $S_{1, \R}$.  
Recall that in the previous section, 
we have defined an $SO(Y)_{\R}$-invariant measure
$|\omega|_{\infty}$ on 
$S_{1, \R}\cong SO(Y)_{\R}/SO(Y)_{v, \R}$.
We can normalize a measure  $d\sigma_{v}$ on the isotropy subgroup 
$SO(Y)_{v, \R}$ in such a way that 
the integration formula
\begin{equation}
\label{form:NormalizationDnu}
\int_{SO(Y)_{\R}} \psi(g) d^{1} g=
\int_{SO(Y)_{\R}/SO(Y)_{v, \R}} |\omega(\dot{g}v)|_{\infty}
\int_{SO(Y)_{v, \R}} \psi(\dot{g}h) d\sigma_{v}(h)
\end{equation}
holds for all integrable functions $\psi(g)\in L^{1}(SO(Y)_{\R})$.
Similarly, for $v^*\in S_{1, \R}^*$, we take a measure 
$d\sigma_{v^*}^*$ on the isotropy subgroup $SO(Y)_{v^*, \R}$
such that the integration formula
\begin{equation}
\label{form:NormalizationDnuStar}
\int_{SO(Y)_{\R}} \psi(g) d^{1} g=
\int_{SO(Y)_{\R}/SO(Y)_{v^*, \R}} |\omega^*({}^{t}\dot{g}^{-1}v^*)|_{\infty}
\int_{SO(Y)_{v^*, \R}} \psi(\dot{g}h) d\sigma_{v^*}^*(h)
\end{equation}
holds  for all integrable functions $\psi(g)\in L^{1}(SO(Y)_{\R})$.
Now we put
\begin{align*}
 Z_{+}(f, \phi; s) &= \int_{1}^{\infty} d^{\times} t\int_{SO(Y)_{\R}/SO(Y)_{\Z}}
 |\chi(t, g)|^{s} \sum_{v\in V_{\Q}-S_{\Q}} \phi(v) f(\rho(t, g)v) d^{1}g, \\
 Z_{-}(f, \phi; s) &= \int_{0}^{1} d^{\times} t\int_{SO(Y)_{\R}/SO(Y)_{\Z}}
 |\chi(t, g)|^{s} \sum_{v\in V_{\Q}-S_{\Q}} \phi(v) f(\rho(t, g)v) d^{1}g, \\
 Z_{+}^*(f^*,  \phi^*; s) &= 
 \int_{0}^{1} d^{\times} t\int_{SO(Y)_{\R}/SO(Y)_{\Z}}
 |\chi^*(t, g)|^{s} \sum_{v^*\in V_{\Q}-S_{\Q}^*} 
 \phi^*(v^*) f^*(\rho^*(t, g)v^*) d^{1}g, \\
 Z_{-}^*(f^*, \phi^*; s) &= 
 \int_{1}^{\infty} d^{\times} t\int_{SO(Y)_{\R}/SO(Y)_{\Z}}
 |\chi^*(t, g)|^{s} \sum_{v^*\in V_{\Q}-S_{\Q}^*} 
 \phi^*(v^*) f^*(\rho^*(t, g)v^*) d^{1}g.
\end{align*}
It is obvious that 
\[
Z(f,\phi;s )= Z_{+}(f, \phi;s)+Z_{-}(f, \phi;s), \quad 
Z^*(f^*,\phi^*;s)=
Z_{+}^*(f^*,\phi^*;s)+
Z_{-}^*(f^*,\phi^*;s).
\]
The four integrals above converges absolutely for $\Re(s)>\frac{m}{2}$, and
further, two integrals $Z_{+}(f, \phi; s)$ and  $Z_{+}^*(f^*, \phi^*; s)$
are absolutely convergent for any $s\in \C$ and define entire functions of $s$. 
Let us calculate $Z_{-}(f, \phi;s)$ {\it formally} by using 
Lemma~\ref{lemma:PoissonTwistRhoG}, the Poisson
summation formula; the interchange of integral and summation will be justified later in
Remark~\ref{remark:Interchange}.
Since $\chi(t, g)= \chi^*(t, g)^{-1}= t^2$, it follows from 
Lemma~\ref{lemma:PoissonTwistRhoG} that 
\begin{align*}
 Z_{-}(f, \phi; s) 
 &= \int_{0}^{1} d^{\times} t\int_{SO(Y)_{\R}/SO(Y)_{\Z}}
 |\chi(t, g)|^{s} \\
 &\qquad \times
 \left\{
 |t|^{-m} 
 \sum_{v^*\in V_{\Q}} \widehat{\phi}(v^*) \widehat{f}(\rho^*(t, g)v^*)-
 \sum_{v\in S_{\Q}} \phi(v) f(\rho(t, g)v)
 \right\}d^{1}g\\
 &= \int_{0}^{1} d^{\times} t\int_{SO(Y)_{\R}/SO(Y)_{\Z}}
 |\chi^*(t, g)|^{\frac{m}{2}-s} 
 \sum_{v^*\in V_{\Q}-S_{\Q}^*} \widehat{\phi}(v^*) \widehat{f}(\rho^*(t, g)v^*)d^{1}g
 \\
& \qquad + 
 \int_{0}^{1} t^{2s-m} d^{\times} t\int_{SO(Y)_{\R}/SO(Y)_{\Z}}
  \sum_{v^*\in S_{\Q}^*} \widehat{\phi}(v^*) \widehat{f}(\rho^*(t, g)v^*)d^{1}g \\
 & \qquad - 
 \int_{0}^{1} t^{2s} d^{\times} t\int_{SO(Y)_{\R}/SO(Y)_{\Z}}
 \sum_{v\in S_{\Q}} \phi(v) f(\rho(t, g)v) d^{1}g.
\end{align*}
The first term of the most right hand side is 
\[
\int_{0}^{1} d^{\times} t\int_{SO(Y)_{\R}/SO(Y)_{\Z}}
 |\chi^*(t, g)|^{\frac{m}{2}-s} 
 \sum_{v^*\in V_{\Q}-S_{\Q}^*} \widehat{\phi}(v^*) \widehat{f}(\rho^*(t, g)v^*)d^{1}g
 =Z_{+}^{*}\left(\widehat{f}, \widehat{\phi}; \frac{m}{2}-s\right).
\]
Using \eqref{form:NormalizationDnu} and \eqref{form:NormalizationDnuStar}, 
we calculate the second and third terms 
following the method of
Sato-Shintani~\cite[Theorem~2]{SatoShintani}.
  Put
\[
S_{1, \Q}=\{v\in V_{\Q}\; |\; P(v)=0, v\neq 0\}, \qquad
S_{1, \Q}^*=\{v^*\in V_{\Q}\; |\; P^*(v^*)=0, v^*\neq 0\}
\]
By the interchange of summation and integration, the third term above becomes
\begin{align}
 \label{form:SingularIntegral1}
 &\qquad \int_{0}^{1} t^{2s} d^{\times} t\int_{SO(Y)_{\R}/SO(Y)_{\Z}}
 \sum_{v\in S_{\Q}} \phi(v) f(\rho(t, g)v) d^{1}g \\
 \nonumber
 &=\sum_{v\in SO(Y)_{\Z}\backslash S_{1, \Q}}\phi(v)
 \int_0^{1} t^{2s} d^{\times}t \int_{SO(Y)_{\R}/SO(Y)_{v, \Z}}
 f(\rho(t, g)v) d^{1}g \\
 \nonumber
 &\qquad + 
 \phi(0) f(0)\int_0^{1} t^{2s} d^{\times} t \int_{SO(Y)_{\R}/SO(Y)_{\Z}} d^{1}g.
\end{align}
By applying \eqref{form:NormalizationDnu} to
$\psi(g)=f(\rho(t, g)v)=f(tgv)$, we have
\begin{align*}
\int_{SO(Y)_{\R}/SO(Y)_{v, \Z}}
 f(\rho(t, g)v) d^{1}g &= 
 \int_{SO(Y)_{\R}/SO(Y)_{v, \R}}  |\omega(\dot{g}v)|_{\infty}
 \int_{SO(Y)_{v, \R}/SO(Y)_{v, \Z}} f(t\dot{g} hv) d\sigma_{v}(h) \\
 &=\int_{S_{1,\R}}  f(tz) |\omega(z)|_{\infty}
 \int_{SO(Y)_{v, \R}/SO(Y)_{v, \Z}}  d\sigma_{v}(h) \\
 &=t^{2-m} 
 \int_{S_{1, \R}}  f(z) |\omega(z)|_{\infty}
 \int_{SO(Y)_{v, \R}/SO(Y)_{v, \Z}}  d\sigma_{v}(h).  
\end{align*}
Here we have used  \eqref{form:InvarianceOmega}
in the third equality. 
Hence 
the integral~\eqref{form:SingularIntegral1} is calculated as 
\begin{align*}
 &\qquad \int_{0}^{1} t^{2s} d^{\times} t\int_{SO(Y)_{\R}/SO(Y)_{\Z}}
 \sum_{v\in S_{\Q}} \phi(v) f(\rho(t, g)v) d^{1}g \\
 &=\sum_{v\in SO(Y)_{\Z}\backslash S_{1, \Q}}\phi(v)
 \int_0^{1} t^{2s+2-m} \, \frac{2dt}{t} 
 \int_{S_{1,\R}}  f(z) |\omega(z)|_{\infty}
 \int_{SO(Y)_{v, \R}/SO(Y)_{v, \Z}}  d\sigma_{v}(h) \\
 &\qquad + 
 \phi(0) f(0) \int_0^{1} t^{2s} \frac{2 dt}{t}
  \int_{SO(Y)_{\R}/SO(Y)_{\Z}} d^{1}g \\
  &=\frac{1}{s+1-\frac{m}{2}} \int_{S_{1,\R}}  f(z) |\omega(z)|_{\infty}
  \sum_{v\in SO(Y)_{\Z}\backslash S_{1, \Q}}\phi(v)
   \int_{SO(Y)_{v, \R}/SO(Y)_{v, \Z}}  d\sigma_{v}(h)\\
   &\qquad + \frac{\phi(0) f(0)}{s} \int_{SO(Y)_{\R}/SO(Y)_{\Z}} d^{1}g.
\end{align*}
Similarly, by term-by-term integration, we have
\begin{align}
 \label{form:SingularIntegral2}
  &\qquad
 \int_{0}^{1} t^{2s-m} d^{\times} t\int_{SO(Y)_{\R}/SO(Y)_{\Z}}
  \sum_{v^*\in S_{\Q}^*} \widehat{\phi}(v^*) \widehat{f}(\rho^*(t, g)v^*)d^{1}g \\
  \nonumber
  &=\sum_{v^*\in SO(Y)_{\Z}\backslash S_{1, \Q}^{*}}
  \widehat{\phi}(v^*) \int_0^1 t^{2s-m} d^{\times}t \int_{SO(Y)_{\R}/SO(Y)_{v^*, \Z}}
 \widehat{f}(\rho^*(t, g)v^*)d^{1}g \\
 \nonumber
 &\qquad + \widehat{\phi}(0) \widehat{f}(0) 
 \int_0^1 t^{2s-m} d^{\times} t \int_{SO(Y)_{\R}/SO(Y)_{\Z}} d^{1}g,
\end{align}
and by using \eqref{form:InvarianceOmegaStar}
and \eqref{form:NormalizationDnuStar}, we obtain
\begin{align*}
&\qquad \int_{SO(Y)_{\R}/SO(Y)_{v^*, \Z}} \widehat{f}(\rho^*(t, g)v^*)d^{1}g \\
&= \int_{SO(Y)_{\R}/SO(Y)_{v^*,\R}} 
\widehat{f}(t^{-1}\cdot 
 {}^{t}\dot{g}^{-1}v^*) |\omega^*({}^{t}\dot{g}^{-1}v^*)|_{\infty}
\int_{SO(Y)_{v^*, \R}/SO(Y)_{v^*, \Z}} d\sigma_{v^*}^{*}(h) \\
&= \int_{S_{1,\R}^{*}} \widehat{f}(t^{-1} z^*) |\omega^*(z^*)|_{\infty}
\int_{SO(Y)_{v^*, \R}/SO(Y)_{v^*, \Z}} d\sigma_{v^*}^{*}(h) \\
&=t^{m-2}\int_{S_{1,\R}^{*}} \widehat{f}(z^*) |\omega^*(z^*)|_{\infty}
\int_{SO(Y)_{v^*, \R}/SO(Y)_{v^*, \Z}} d\sigma_{v^*}^{*}(h).
\end{align*}
Hence we see that 
\begin{align*}
  &\qquad
 \int_{0}^{1} t^{2s-m} d^{\times} t\int_{SO(Y)_{\R}/SO(Y)_{\Z}}
  \sum_{v^*\in S_{\Q}^*} \widehat{\phi}(v^*) \widehat{f}(\rho^*(t, g)v^*)d^{1}g \\
  &=
  \frac{1}{s-1}\int_{S_{1,\R}^{*}} \widehat{f}(z^*) |\omega^*(z^*)|_{\infty}
  \sum_{v^*\in SO(Y)_{\Z}\backslash S_{1, \Q}^{*}}
  \widehat{\phi}(v^*) \int_{SO(Y)_{v^*, \R}/SO(Y)_{v^*, \Z}} d\sigma_{v^*}^{*}(h) \\
  &\qquad +\frac{\widehat{\phi}(0) \widehat{f}(0)}{s-\frac{m}{2}}
  \int_{SO(Y)_{\R}/SO(Y)_{\Z}} d^{1}g
\end{align*}
Now we put
\begin{align}
\label{form:DefOfNV}
\sigma(v) &:=
\int_{SO(Y)_{v, \R}/SO(Y)_{v, \Z}}  d\sigma_{v}(h), \\
\label{form:DefOfNVStar}
\sigma^*(v^*) &:=
\int_{SO(Y)_{v^*, \R}/SO(Y)_{v^*, \Z}} d\sigma_{v^*}^{*}(h).
\end{align}
Then we have the first assertion of the following 
lemma; the second assertion can be proved similarly as the first assertion, 
and then the third assertion follows immediately from the first and second
assertions.  
\begin{lemma}
\label{lemma:ACandFEofZetaIntegral}
\begin{enumerate}
\item 
 For $\Re(s)>\frac{m}{2}$, we have
\begin{align*}
 Z(f, \phi; s) &= Z_{+}(f, \phi; s)+
 Z_{+}^{*}\left(\widehat{f}, \widehat{\phi}; \frac{m}{2}-s\right)\\
 &\qquad +
   \frac{1}{s-1}\int_{S_{1,\R}^{*}} \widehat{f}(z^*) |\omega^*(z^*)|_{\infty}
  \sum_{v^*\in SO(Y)_{\Z}\backslash S_{1, \Q}^{*}}
  \widehat{\phi}(v^*) \sigma^*(v^*) \\
  &\qquad +\frac{\widehat{\phi}(0) \widehat{f}(0)}{s-\frac{m}{2}}
  \int_{SO(Y)_{\R}/SO(Y)_{\Z}} d^{1}g \\
 &\qquad -\frac{1}{s+1-\frac{m}{2}} \int_{S_{1,\R}}  f(z) |\omega(z)|_{\infty}
  \sum_{v\in SO(Y)_{\Z}\backslash S_{1, \Q}}\phi(v)
  \sigma(v) \\
   &\qquad - \frac{\phi(0) f(0)}{s} \int_{SO(Y)_{\R}/SO(Y)_{\Z}} d^{1}g.
\end{align*}
\item For $\Re(s)>\frac{m}{2}$, we have
\begin{align*}
 Z^*(\widehat{f}, \widehat{\phi}; s) &= Z_{+}^*(\widehat{f}, \widehat{\phi}; s)
+Z_{+}\left(f, \phi, \frac{m}{2}-s\right)\\
  &\qquad +\frac{1}{s-1} \int_{S_{1,\R}}  f(z) |\omega(z)|_{\infty}
  \sum_{v\in SO(Y)_{\Z}\backslash S_{1, \Q}}\phi(v)
   \sigma(v) \\
   &\qquad + \frac{\phi(0) f(0)}{s-\frac{m}{2}} \int_{SO(Y)_{\R}/SO(Y)_{\Z}} d^{1}g\\
  &\qquad -
  \frac{1}{s+1-\frac{m}{2}}\int_{S_{1,\R}^{*}} \widehat{f}(z^*) |\omega^*(z^*)|_{\infty}
  \sum_{v^*\in SO(Y)_{\Z}\backslash S_{1, \Q}^{*}}
  \widehat{\phi}(v^*) 
   \sigma^*(v^*) \\
  &\qquad -\frac{\widehat{\phi}(0) \widehat{f}(0)}{s}
  \int_{SO(Y)_{\R}/SO(Y)_{\Z}} d^{1}g.
\end{align*}
\item As functions of $s$, the integrals $Z(f, \phi, s)$ and 
 $Z^*(\widehat{f}, \widehat{\phi}; s)$ can be continued analytically 
 to the whole $s$-plane, and satisfy the following functional equation:
\[
  Z^*(\widehat{f}, \widehat{\phi}; s) = Z\left(f, \phi; \frac{m}{2}-s\right).
\]
\end{enumerate}
\end{lemma}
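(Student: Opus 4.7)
The computations carried out in the paragraphs immediately preceding the lemma already establish the bulk of (1); the plan is first to assemble those steps, then to derive (2) by a symmetric argument, and finally to read off (3).

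For (1), the plan is to start from the decomposition $Z(f,\phi;s)=Z_+(f,\phi;s)+Z_-(f,\phi;s)$ and to transform $Z_-(f,\phi;s)$ using Lemma \ref{lemma:PoissonTwistRhoG}. After adding and subtracting the terms corresponding to $S_{\Q}$ and $S_{\Q}^*$, the Poisson identity rewrites the sum $\sum_{v\in V_{\Q}-S_{\Q}}\phi(v)f(\rho(t,g)v)$ as $|t|^{-m}\sum_{v^*\in V_{\Q}-S_{\Q}^*}\widehat\phi(v^*)\widehat f(\rho^*(t,g)v^*)$ plus two singular sums indexed by $S_{\Q}$ and $S_{\Q}^*$. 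The first integrates to $Z_+^*(\widehat f,\widehat\phi;\tfrac{m}{2}-s)$ since $|\chi^*(t,g)|=t^{-2}$. The singular integrals are then evaluated by splitting $S_{\Q}=SO(Y)_{\Z}\cdot S_{1,\Q}\sqcup\{0\}$ (and similarly for $S_{\Q}^*$), peeling off the inner Haar integrals via the normalizations \eqref{form:NormalizationDnu} and \eqref{form:NormalizationDnuStar}, and using the homogeneity formulas \eqref{form:InvarianceOmega} and \eqref{form:InvarianceOmegaStar} to extract the $t$-dependence outside the invariant measure. The elementary $t$-integrals $\int_0^1 t^{2s+2-m}\,d^\times t$ and $\int_0^1 t^{2s-m}\,d^\times t$ then produce the fractions $1/(s+1-\tfrac{m}{2})$ and $1/(s-1)$, while the contributions from $v=0$, $v^*=0$ give the remaining $1/s$ and $1/(s-\tfrac{m}{2})$ terms. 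Every interchange of summation and integration is valid on $\Re(s)>\tfrac{m}{2}$ by the absolute convergence of $\zeta_{\pm}(\phi;s)$.

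For (2), I would run exactly the same argument with the dual data $(V^*,\rho^*,\widehat f,\widehat\phi)$ in place of $(V,\rho,f,\phi)$, exploiting that $\chi^*(t,g)=\chi(t,g)^{-1}$ and that a second application of the finite Fourier transform on $V_{\Q}$ returns $\phi$ (up to $v\mapsto -v$, which is absorbed by the $SO(Y)_{\Z}$-invariance hypothesis \eqref{form:PhiIsSOYZinv}). The roles of $\omega$ and $\omega^*$, of $\phi$ and $\widehat\phi$, and of $f$ and $\widehat f$ are symmetrically interchanged, yielding the stated formula.

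For (3), meromorphic continuation follows immediately: $Z_+$ and $Z_+^*$ are entire by the exponential decay of $f,\widehat f$ combined with the convergence estimate used for $\zeta_{\pm}$, so the right-hand sides of (1) and (2) exhibit $Z$ and $Z^*$ as entire functions plus explicit rational functions of $s$. The functional equation $Z^*(\widehat f,\widehat\phi;s)=Z(f,\phi;\tfrac{m}{2}-s)$ is then a termwise check: substituting $s\mapsto \tfrac{m}{2}-s$ in (1) swaps the two entire pieces, and the four singular fractions exchange in pairs, with $1/(s-1)$ and $-1/(s+1-\tfrac{m}{2})$ swapping as do $1/(s-\tfrac{m}{2})$ and $-1/s$; the coefficients attached to each pair (involving the integrals over $S_{1,\R}$ or $S_{1,\R}^*$, or $\phi(0)f(0)$ vs.\ $\widehat\phi(0)\widehat f(0)$) match precisely. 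The only real obstacle is the bookkeeping of signs and the pairing of the $v=0$ and $v^*=0$ contributions across the two formulas; no further analytic input is required.
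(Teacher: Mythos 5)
Your proposal follows essentially the same route as the paper: split $Z=Z_++Z_-$, convert $Z_-$ via the Poisson summation identity of Lemma~\ref{lemma:PoissonTwistRhoG} into $Z_+^*(\widehat f,\widehat\phi;\tfrac{m}{2}-s)$ plus singular integrals over $S_{\Q}$ and $S_{\Q}^*$, evaluate those using the normalizations \eqref{form:NormalizationDnu}, \eqref{form:NormalizationDnuStar} and the homogeneity of $|\omega|_\infty$, $|\omega^*|_\infty$, argue symmetrically for (2), and read off (3) by substituting $s\mapsto\tfrac{m}{2}-s$. One small caution: for (2) you do not need (and should not invoke) Fourier inversion $\widehat{\widehat\phi}(v)=\phi(-v)$ --- note $-I\notin SO(Y)$ when $m$ is odd, so $SO(Y)_{\Z}$-invariance does not absorb $v\mapsto-v$ --- since Lemma~\ref{lemma:PoissonTwistRhoG} read in the reverse direction already converts $\sum_{v^*}\widehat\phi(v^*)\widehat f(\rho^*(t,g)v^*)$ into $|t|^{m}\sum_{v}\phi(v)f(\rho(t,g)v)$, which is all the symmetric argument requires.
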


\begin{remark}
\label{remark:Interchange}
In~\cite{Igusa1971}, Igusa studied the so-called 
admissible representations related to 
the Siegel-Weil formula~\cite{WeilActa1965}. 
According to his classification, 
our prehomogeneous vector space 
$(GL_1(\C)\times SO(Y), \C^m)$ gives an 
admissible representation if $m\geq 5$, and this implies that the integrals
\[
\int_{SO(Y)_{\R}/SO(Y)_{\Z}}
  \sum_{v\in V_{\Q}} \phi(v) f(gv) d^{1}g, \qquad 
\int_{SO(Y)_{\R}/SO(Y)_{\Z}}
  \sum_{v^*\in V_{\Q}} \phi^*(v^*) f^*({}^{t}g^{-1}v^*) d^{1}g
\]
are absolutely convergent for all
Schwartz-Bruhat functions $f, f^*\in \Sch{V_{\R}}$ and  
$\phi, \phi^*\in \Sch{V_{\Q}}$.
Hence the integrals
\begin{align*}
\int_{SO(Y)_{\R}/SO(Y)_{\Z}}
  \sum_{v\in S_{1, \Q}} \phi(v) f(gv) d^{1}g &= 
 \int_{S_{1,\R}}  f(z) |\omega(z)|_{\infty}
  \sum_{v\in SO(Y)_{\Z}\backslash S_{1, \Q}}\phi(v)
   \sigma(v), \\
\int_{SO(Y)_{\R}/SO(Y)_{\Z}}
  \sum_{v^*\in S_{1, \Q}^*} \widehat{\phi}(v^*) \widehat{f}({}^{t}g^{-1}v^*) d^{1}g &=
\int_{S_{1,\R}^{*}} \widehat{f}(z^*) |\omega^*(z^*)|_{\infty}
  \sum_{v^*\in SO(Y)_{\Z}\backslash S_{1, \Q}^{*}}
  \widehat{\phi}(v^*) 
   \sigma^*(v^*),
\end{align*}
which appear in Lemma~\ref{lemma:ACandFEofZetaIntegral}, are 
absolutely convergent, 
and the interchange of integral and summation can be justified by
Fubini's theorem. 
\end{remark}

\section{Analytic properties of Siegel's zeta functions}
\label{section:PropertiesOfSiegelZeta}

\begin{theorem}
\label{theorem:SiegelZetaProperties}
Assume that
$\phi\in\mathcal{S}(V_{\Q})$ is $SO(Y)_{\Z}$-invariant.
\begin{enumerate}
\item The zeta functions
$\zeta_{\epsilon}(\phi; s)$ and $\zeta_{\eta}^*(\widehat{\phi};s)$
have analytic continuations of $s$ in $\C$, and
the zeta functions multiplied by $(s-1)(s-\frac{m}{2})$ are 
entire functions of $s$ of finite order in any vertical strip.
\item Th zeta functions
$\zeta_{\epsilon}(\phi; s)$ and $\zeta_{\eta}^*(\widehat{\phi};s)$ 
satisfy the following functional equation:
\begin{multline}
\label{form:FEofSiegelZeta}
\begin{pmatrix}
\zeta_{+}\left(\phi; \frac{m}{2}-s\right) \\[4pt]
\zeta_{-}\left(\phi; \frac{m}{2}-s\right)
\end{pmatrix}=
\Gamma\left(s+1-\frac{m}{2}\right)\Gamma(s) |D|^{\frac{1}{2}}
\cdot 2^{-2s+\frac{m}{2}}\cdot \pi^{-2s+\frac{m}{2}-1} \\
\times
\begin{pmatrix}
\sin \pi\left(\frac{p}{2}-s\right) & 
\sin \frac{\pi (m-p)}{2} \\[4pt]
\sin \frac{\pi p}{2} & 
\sin \pi\left(\frac{m-p}{2}-s\right)
\end{pmatrix}
\begin{pmatrix}
\zeta_{+}^*(\widehat{\phi};s) \\[4pt]
\zeta_{-}^*(\widehat{\phi};s)
\end{pmatrix}.
\end{multline}
\item The residues of 
$\zeta_{\epsilon}(\phi; s), \zeta_{\eta}^*(\widehat{\phi};s)$
at $s=1$ and $s=\frac{m}{2}$ are given by
\begin{align}
\label{form:ResidueZetaS=m/2}
\Res_{s=\frac{m}{2}}\zeta_{\epsilon}(\phi;s) &=
 \widehat{\phi}(0) 
  \int_{SO(Y)_{\R}/SO(Y)_{\Z}} d^{1}g, \\ 
\label{form:ResidueStarZetaS=m/2}
\Res_{s=\frac{m}{2}}\zeta_{\eta}^*(\widehat{\phi};s) 
 &=\phi(0) 
  \int_{SO(Y)_{\R}/SO(Y)_{\Z}} d^{1}g, \\[10pt]
 \label{form:ResidueZetaS=1}
\Res_{s=1}\zeta_{\epsilon}(\phi;s) &=
\Gamma\left(\frac{m}{2}-1\right) |D|^{\frac{1}{2}}
\cdot 2^{2-\frac{m}{2}}\cdot \pi^{1-\frac{m}{2}}
  \sum_{v^*\in SO(Y)_{\Z}\backslash S_{1, \Q}^{*}}
  \widehat{\phi}(v^*) \sigma^*(v^*) \\ \nonumber
  &\qquad \quad
 \times
 \left\{
 \begin{array}{ll}
\sin \dfrac{\pi}{2}(m-p) & (\epsilon= +) \\[10pt]
\sin \dfrac{\pi p}{2} & (\epsilon= -)
 \end{array}
 \right.,
   \\[10pt]
    \label{form:ResidueZetaStarS=1}
\Res_{s=1}\zeta_{\eta}^*(\widehat{\phi};s) &=
\Gamma\left(\frac{m}{2}-1\right) |D|^{-\frac{1}{2}}
\cdot 2^{2-\frac{m}{2}}\cdot \pi^{1-\frac{m}{2}}
  \sum_{v\in SO(Y)_{\Z}\backslash S_{1, \Q}}
  {\phi}(v) \sigma(v) \\ \nonumber
   &\qquad \quad
 \times
 \left\{
 \begin{array}{ll}
\sin \dfrac{\pi}{2}(m-p) & (\eta= +) \\[10pt]
\sin \dfrac{\pi p}{2} & (\eta= -)
 \end{array}
 \right..
\end{align}
\item The following relations hold:
\begin{align}
\label{form:SpecialVallueZetaAtm/2-1}
\zeta_{+}\left(\phi;\frac{m}{2}-1\right)+
\zeta_{-}\left(\phi;\frac{m}{2}-1\right) &= 
- \sum_{v\in SO(Y)_{\Z}\backslash S_{1, \Q}}
{\phi}(v) \sigma(v), 
\\[5pt]
\label{form:SpecialVallueZetaStarAtm/2-1}
\zeta_{+}^*\left(\widehat{\phi}\, ;\frac{m}{2}-1\right)+
\zeta_{-}^*\left(\widehat{\phi}\, ;\frac{m}{2}-1\right) &= 
-\sum_{v^*\in SO(Y)_{\Z}\backslash S_{1, \Q}^{*}}
  \widehat{\phi}(v^*) \sigma^*(v^*). 
\end{align}
\end{enumerate}
\end{theorem}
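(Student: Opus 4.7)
The approach is to descend the analytic properties of the one-variable zeta functions from those of the two-variable zeta integrals, using the integral representation
\[
Z(f,\phi;s) = \zeta_+(\phi;s)\Phi_+(f;s) + \zeta_-(\phi;s)\Phi_-(f;s)
\]
of Lemma~\ref{lemma:IntegralRepnOfZeta} as a system in the unknowns $\zeta_\pm(\phi;s)$. For the analytic continuation (1), I would choose test functions $f_\pm \in \mathcal{S}(V_\R)$ with $\supp(f_\pm)\subset V_\pm$, so that $\Phi_\mp(f_\pm;s)\equiv 0$ and the identity collapses to $\zeta_\epsilon(\phi;s) = Z(f_\epsilon,\phi;s)/\Phi_\epsilon(f_\epsilon;s)$. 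By Lemma~\ref{lemma:ACandFEofZetaIntegral}(3), the numerator extends meromorphically to $\C$ with simple poles possibly only at $s=0,1,m/2-1,m/2$, and classical Gelfand--Shilov theory provides the meromorphic continuation of $\Phi_\epsilon(f_\epsilon;s)$ with simple poles at $s=m/2-k$ for $k\in\Z_{\geq 1}$. Dividing, the apparent poles of $Z$ at $s=0$ and $s=m/2-1$ are absorbed by matching poles of $\Phi_\epsilon(f_\epsilon;s)$, leaving only $s=1$ and $s=m/2$; finite order in vertical strips is inherited from $Z$.

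For the functional equation (2), I would combine $Z^*(\widehat f,\widehat\phi;s)=Z(f,\phi;m/2-s)$ from Lemma~\ref{lemma:ACandFEofZetaIntegral}(3) with the local functional equation of Lemma~\ref{lemma:LFEfromGS}. Substituting the local functional equation expresses $\sum_\eta \zeta_\eta^*(\widehat\phi;s)\Phi_\eta^*(\widehat f;s)$ as a linear combination of the $\Phi_\epsilon(f;m/2-s)$ whose coefficients are the entries of the row vector $(\zeta_+^*(\widehat\phi;s),\zeta_-^*(\widehat\phi;s))\,G(s)$, where $G(s)$ is the scalar-times-matrix factor appearing in Lemma~\ref{lemma:LFEfromGS}. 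Matching this against the right-hand side $\sum_\epsilon \zeta_\epsilon(\phi;m/2-s)\Phi_\epsilon(f;m/2-s)$ and varying $f$ so that $(\Phi_+(f;m/2-s),\Phi_-(f;m/2-s))$ spans $\C^2$ produces a matrix identity, whose transpose is exactly $(\ref{form:FEofSiegelZeta})$.

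For the residues in (3) and the special values in (4), I would equate residues at each of $s=1,m/2-1,m/2$ on both sides of the integral representation, reading $\Res_{s=\cdot}Z(f,\phi;s)$ off Lemma~\ref{lemma:ACandFEofZetaIntegral}(1). At $s=m/2$, $\Phi_\epsilon(f;s)$ is regular and $\widehat f(0)=\Phi_+(f;m/2)+\Phi_-(f;m/2)$, so testing against $f_\pm$ gives $(\ref{form:ResidueZetaS=m/2})$. At $s=1$, restricting to $f\in C_0^\infty(V_\R-S_\R)$ makes $\Phi_\epsilon(f;1)$ absolutely convergent; Lemma~\ref{lemma:SingularInvariantDistributions}(1) then rewrites the singular integral in $\Res_{s=1}Z$ precisely as the linear combination of $\Phi_\pm(f;1)$ with the sine coefficients appearing in $(\ref{form:ResidueZetaS=1})$. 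At $s=m/2-1$ the $\zeta_\epsilon$ are regular by (1), while $\Phi_\epsilon(f;s)$ has a simple pole there with residue $\int_{S_{1,\R}}f\,|\omega|_\infty$; equating residues and cancelling the common factor yields $(\ref{form:SpecialVallueZetaAtm/2-1})$. The dual identities $(\ref{form:ResidueStarZetaS=m/2})$, $(\ref{form:ResidueZetaStarS=1})$, and $(\ref{form:SpecialVallueZetaStarAtm/2-1})$ follow by the same arguments applied to part (2) of Lemma~\ref{lemma:ACandFEofZetaIntegral} together with part (2) of Lemma~\ref{lemma:SingularInvariantDistributions}, or equivalently by invoking the functional equation (2). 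The main obstacle I anticipate is the pole bookkeeping in Step~1 and the sign-and-trigonometric matching in Step~4, both of which demand a careful juxtaposition of the Gelfand--Shilov local residue structure of $\Phi_\epsilon$ with the explicit singular terms isolated in Lemma~\ref{lemma:ACandFEofZetaIntegral}(1).
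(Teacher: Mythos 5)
Your strategy for parts (1)--(3) is essentially the paper's: unfold $Z(f,\phi;s)=\sum_\epsilon\zeta_\epsilon(\phi;s)\Phi_\epsilon(f;s)$ against test functions supported in $V_\pm$, divide, derive the functional equation by combining $Z^*(\widehat f,\widehat\phi;s)=Z(f,\phi;\tfrac m2-s)$ with Lemma~\ref{lemma:LFEfromGS} and letting the vector $\bigl(\Phi_+(f;\tfrac m2-s),\Phi_-(f;\tfrac m2-s)\bigr)$ span $\C^2$, and read the residues at $s=1,\tfrac m2$ off Lemma~\ref{lemma:ACandFEofZetaIntegral}(1) together with Lemma~\ref{lemma:SingularInvariantDistributions}. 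One bookkeeping point in (1) is off, though not fatally: for $f_\epsilon$ supported inside the open set $V_\epsilon$ the function vanishes identically on $S_\R$ and at the origin, so the singular terms of $Z(f_\epsilon,\phi;s)$ at $s=0$ and $s=\tfrac m2-1$ simply drop out of Lemma~\ref{lemma:ACandFEofZetaIntegral}(1); they are not ``absorbed'' by poles of $\Phi_\epsilon(f_\epsilon;s)$ --- indeed the paper takes $f_\epsilon\in C_0^\infty(V_\epsilon)$, for which $\Phi_\epsilon(f_\epsilon;s)$ is entire and has no poles to absorb anything (and for odd $m$ the Gelfand--Shilov pole set $\{\tfrac m2-k\}$ does not even contain $s=0$). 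You should also make explicit, as the paper does, that for each $s_0$ one chooses $f_\epsilon$ with $\Phi_\epsilon(f_\epsilon;s_0)\neq0$, since the ratio $Z/\Phi_\epsilon$ could otherwise acquire spurious poles at zeros of $\Phi_\epsilon$.

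Where you genuinely diverge is part (4). The paper obtains \eqref{form:SpecialVallueZetaAtm/2-1} by specializing the global functional equation \eqref{form:FEofSiegelZeta} at $s=1$: the sine matrix degenerates there, and a limit computation involving $\Gamma(s+1-\tfrac m2)^{-1}$, the derivatives of the sine entries, and the already-established residue formula \eqref{form:ResidueZetaStarS=1} produces the special value; \eqref{form:SpecialVallueZetaStarAtm/2-1} is obtained symmetrically by setting $s=\tfrac m2-1$ and using \eqref{form:ResidueZetaS=1}. You instead equate residues at $s=\tfrac m2-1$ directly in the integral representation, using that $\Res_{s=\frac m2-1}\Phi_\epsilon(f;s)=\int_{S_{1,\R}}f\,|\omega|_\infty$ for both $\epsilon=\pm$ (the Gelfand--Shilov residue $\delta(P)$) against the explicit $\tfrac1{s+1-\frac m2}$ term of Lemma~\ref{lemma:ACandFEofZetaIntegral}(1). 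This works for $m\geq5$ (so that $\tfrac m2-1\notin\{1,\tfrac m2\}$ and $\zeta_\pm(\phi;\cdot)$ is regular there) and is arguably more direct, since it avoids the trigonometric limit computation entirely; its cost is that you must justify the Gelfand--Shilov residue identity for Schwartz functions not vanishing on $S_\R$, a fact the paper never needs in this form (its Lemma~\ref{lemma:SingularInvariantDistributions} concerns $f$ supported away from $S_\R$). Both routes are sound.
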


\begin{proof}
Let $f\in C_{0}^{\infty}(V_{\R})$ in 
Lemma~\ref{lemma:ACandFEofZetaIntegral} (1).
Then we see that 
\begin{align*}
 Z(f, \phi; s) &= Z_{+}(f, \phi; s)+
 Z_{+}^{*}\left(\widehat{f}, \widehat{\phi}; \frac{m}{2}-s\right)\\
 &\qquad +
   \frac{1}{s-1}\int_{S_{1,\R}^{*}} \widehat{f}(z^*) |\omega^*(z^*)|_{\infty}
  \sum_{v^*\in SO(Y)_{\Z}\backslash S_{1, \Q}^{*}}
  \widehat{\phi}(v^*) \sigma^*(v^*) \\
  &\qquad +\frac{\widehat{\phi}(0) \widehat{f}(0)}{s-\frac{m}{2}}
  \int_{SO(Y)_{\R}/SO(Y)_{\Z}} d^{1}g,
\end{align*}
and thus the integral $Z(f, \phi;s)$ can be continued to a meromorphic function 
on the whole $\C$, and ($s-1)(s-\frac{m}{2})Z(f, \phi;s)$ is an entire function of $s$.
Further, for any $s\in \C$, we take $f_{\epsilon}\in C_{0}^{\infty}(V_{\epsilon})$
such that $\Phi_{\epsilon}(f;s) \neq 0$. Then
Lemma~\ref{lemma:IntegralRepnOfZeta} implies that 
\[
 \zeta_{\epsilon}(\phi; s)= \frac{Z(\phi, f_{\epsilon}; s)}{\Phi_{\epsilon}(f;s)},
\]
and hence $\zeta_{\epsilon}(\phi; s)$ also can be continued to a meromorphic function 
on the whole $\C$, and ($s-1)(s-\frac{m}{2})\zeta_{\epsilon}(\phi;s)$ is an entire function of $s$.
The analytic continuation of $\zeta_{\eta}^*(\widehat{\phi};s)$ can be proved in a similar fashion. 
Further, one can prove the boundedness of $\zeta_{\epsilon}(\phi; s)$ and $\zeta_{\eta}^*(\widehat{\phi};s)$ 
in the same method as in Ueno~\cite[\S\ 4]{Ueno}.
By Lemma~\ref{lemma:IntegralRepnOfZeta} and 
Lemma~\ref{lemma:ACandFEofZetaIntegral}~(3), we have 
\[
\left(\Phi_{+}^*(\widehat{f} ;s) \; 
\Phi_{-}^*(\widehat{f} ;s)
\right)
\begin{pmatrix}
\zeta_{+}^*(\widehat{\phi} ; s) \\[4pt]
\zeta_{-}^*(\widehat{\phi}; s)
\end{pmatrix} 
= 
\left(\Phi_{+}\left(f; \tfrac{m}{2}-s\right) \;  
\Phi_{-}\left(f; \tfrac{m}{2}-s\right) 
\right)
\begin{pmatrix}
\zeta_{+}\left(\phi; \frac{m}{2}-s\right) \\[4pt]
\zeta_{-}\left(\phi; \frac{m}{2}-s\right) 
\end{pmatrix},
\]
and by Lemma~\ref{lemma:LFEfromGS}, we have
\[
\left(\Phi_{+}^*(\widehat{f} ;s) \; 
\Phi_{-}^*(\widehat{f} ;s)
\right) = 
\left(\Phi_{+}\left(f; \tfrac{m}{2}-s\right) \;  
\Phi_{-}\left(f; \tfrac{m}{2}-s\right) 
\right)\cdot {}^{t}A(s),
\]
where $A(s)$ is given by
\[
A(s)=
\Gamma\left(s+1-\frac{m}{2}\right)\Gamma(s) |D|^{\frac{1}{2}}
\cdot 2^{-2s+\frac{m}{2}}\cdot \pi^{-2s+\frac{m}{2}-1} 
\begin{pmatrix}
\sin \pi\left(\frac{p}{2}-s\right) & 
\sin \frac{\pi p}{2} \\[4pt]
\sin \frac{\pi(m-p)}{2} & 
\sin \pi\left(\frac{m-p}{2}-s\right)
\end{pmatrix}.
\]
This implies that the vector 
\begin{equation}
 \label{form:ZetaFunctionVector}
\begin{pmatrix}
\zeta_{+}\left(\phi; \frac{m}{2}-s\right) \\[4pt]
\zeta_{-}\left(\phi; \frac{m}{2}-s\right)
\end{pmatrix}- {}^{t}A(s)
\begin{pmatrix}
\zeta_{+}^*(\widehat{\phi};s) \\[4pt]
\zeta_{-}^*(\widehat{\phi};s)
\end{pmatrix}
\end{equation}
is orthogonal to the vector 
\[
\left(\Phi_{+}\left(f; \tfrac{m}{2}-s\right) \;  
\Phi_{-}\left(f; \tfrac{m}{2}-s\right) 
\right)
\]
for arbitrary $f\in \mathcal{S}(V_{\R})$.
For any $s\in \C$, there exists an 
$f_{\epsilon}\in C_{0}^{\infty}(V_{\epsilon})$ such that 
$\Phi_{\epsilon}(f; \frac{m}{2}-s)\neq 0$, and hence
\eqref{form:ZetaFunctionVector} is the zero vector. 
This proves the functional equation~\eqref{form:FEofSiegelZeta}.
Next we calculate the residues.
For the simple  pole at $s=\frac{m}{2}$, we have 
\[
 \Res_{s=\frac{m}{2}}Z(\phi, f;s) = 
  \widehat{\phi}(0) \widehat{f}(0) \cdot 
  \int_{SO(Y)_{\R}/SO(Y)_{\Z}} d^{1}g 
\] 
by Lemma~\ref{lemma:ACandFEofZetaIntegral}~(1).
For $f\in C_0^{\infty}(V_{\epsilon})$, 
Lemma~\ref{lemma:IntegralRepnOfZeta} implies
$Z(\phi, f;s)=\zeta_{\epsilon}(\phi;s) \cdot 
 \Phi_{\epsilon}\left(f; s\right)$, and  
 $\Phi_{\epsilon}\left(f; \tfrac{m}{2}\right)$ is meaningful:
\[
\Phi_{\epsilon}\left(f; \tfrac{m}{2}\right) 
=\lim_{s\rightarrow \frac{m}{2}}
\int_{V_{\epsilon}}
 f(x) |P(x)|^{s-\frac{m}{2}} dx = \int_{V_{\R}} f(x) dx= \widehat{f}(0).
\]
Hence we have
\[
\Res_{s=\frac{m}{2}}\zeta_{\epsilon}(\phi;s) =
 \widehat{\phi}(0) 
  \int_{SO(Y)_{\R}/SO(Y)_{\Z}} d^{1}g, 
\]
and similarly
\[
\Res_{s=\frac{m}{2}}\zeta_{\eta}^*(\widehat{\phi};s) =\phi(0) 
  \int_{SO(Y)_{\R}/SO(Y)_{\Z}} d^{1}g.
\]
By Lemma~\ref{lemma:ACandFEofZetaIntegral} (1),
it is easy to pick up
the residue of 
$Z(f, \phi; s)$ at the simple pole $s=1$, and
together with
Lemma~\ref{lemma:IntegralRepnOfZeta}, it implies that for 
$f\in C_0^{\infty}(V_{\epsilon})$,
\[
\Res_{s=1}\zeta_{\epsilon}(\phi;s) \cdot 
 \Phi_{\epsilon}\left(f; 1\right) =
\int_{S_{1,\R}^{*}} \widehat{f}(z^*) |\omega^*(z^*)|_{\infty}
  \sum_{v^*\in SO(Y)_{\Z}\backslash S_{1, \Q}^{*}}
  \widehat{\phi}(v^*) \sigma^*(v^*).
\]
Here the value
$\Phi_{\epsilon}\left(f; 1\right)$ is meaningful, and
\[
\Phi_{\epsilon}\left(f; 1\right) 
=\lim_{s\rightarrow 1}
\int_{V_{\epsilon}}
 f(x) |P(x)|^{s-\frac{m}{2}} dx = 
\int_{V_{\epsilon}}
 f(x) |P(x)|^{1-\frac{m}{2}} dx.  
\]
Furthermore, by Lemma~\ref{lemma:SingularInvariantDistributions}~(1), 
we have
\begin{multline*}
\int_{S_{1,\R}^*}\widehat{f}(v^*) |\omega^*(v^*)|_{\infty} 
=
\Gamma\left(\frac{m}{2}-1\right) |D|^{\frac{1}{2}}
\cdot 2^{2-\frac{m}{2}}\cdot \pi^{1-\frac{m}{2}} \\
\times
\left\{
\begin{array}{ll}
\displaystyle
\sin \dfrac{\pi}{2}(m-p)
\int_{V_{+}} f(v) |P(v)|^{1-\frac{m}{2}} dv 
&\qquad (f\in C_{0}^{\infty}(V_{+})
\\[15pt]
\displaystyle
\sin \dfrac{\pi p}{2} 
\int_{V_{-}} f(v) |P(v)|^{1-\frac{m}{2}} dv
&\qquad (f\in C_{0}^{\infty}(V_{-})
\end{array},
\right.
\end{multline*}
and hence we obtain the residue formula~\eqref{form:ResidueZetaS=1}.
%
Similarly, the residue formula~\eqref{form:ResidueZetaStarS=1}
can be proved
with Lemma~\ref{lemma:SingularInvariantDistributions}~(2);
the detail is omitted. 
To prove the relation~\eqref{form:SpecialVallueZetaAtm/2-1}, 
we let $s=1$ in the functiona equation~\eqref{form:FEofSiegelZeta}:
\begin{multline*}
\Gamma\left(s+1-\frac{m}{2}\right)^{-1}
\left(\zeta_{+}\left(\phi; \frac{m}{2}-s\right) 
+\zeta_{-}\left(\phi; \frac{m}{2}-s\right)\right) 
=
\Gamma(s) |D|^{\frac{1}{2}}
\cdot 2^{-2s+\frac{m}{2}}\cdot \pi^{-2s+\frac{m}{2}-1} \\[5pt]
\times
\begin{pmatrix}
\sin \pi\left(\frac{p}{2}-s\right) + 
\sin \frac{\pi p}{2} \quad &
\sin \frac{\pi (m-p)}{2} +
\sin \pi\left(\frac{m-p}{2}-s\right)
\end{pmatrix}
\begin{pmatrix}
\zeta_{+}^*(\widehat{\phi};s) \\[4pt]
\zeta_{-}^*(\widehat{\phi};s)
\end{pmatrix}.
\end{multline*}
Since
\[
\left(
\sin \pi\left(\frac{p}{2}-s\right) + 
\sin \frac{\pi p}{2}\right)\bigg|_{s=1} = 0, \quad 
\left(\sin \frac{\pi (m-p)}{2} +
\sin \pi\left(\frac{m-p}{2}-s\right)\right)
\bigg|_{s=1} = 0,
\]
we have
\begin{multline*}
\lim_{s\rightarrow1}
\Gamma\left(s+1-\frac{m}{2}\right)^{-1}
\left(\zeta_{+}\left(\phi; \frac{m}{2}-s\right) 
+\zeta_{-}\left(\phi; \frac{m}{2}-s\right) \right) \\[5pt]
=
|D|^{\frac{1}{2}}
\cdot 2^{-2+\frac{m}{2}}\cdot \pi^{-3+\frac{m}{2}} \\[5pt]
\times
\begin{pmatrix}
\frac{d}{ds} \sin\pi \left(\frac{p}{2}-s\right)
\bigg|_{s=1} \quad &
\frac{d}{ds} 
\sin \pi\left(\frac{m-p}{2}-s\right)
\bigg|_{s=1} 
\end{pmatrix}
\begin{pmatrix}
\Res_{s=1} \zeta_{+}^*(\widehat{\phi};s) \\[4pt]
\Res_{s=1} \zeta_{-}^*(\widehat{\phi};s)
\end{pmatrix}.
\end{multline*}
By using \eqref{form:ResidueZetaStarS=1}
and
\begin{align*}
\frac{d}{ds} \sin\pi \left(\frac{p}{2}-s\right)
\bigg|_{s=1} &= -\pi \cos\pi \left(\frac{p}{2}-1\right) = 
\pi \cos \frac{\pi p}{2}, \\
\frac{d}{ds} 
\sin \pi\left(\frac{m-p}{2}-s\right)
\bigg|_{s=1} &=-\pi \cos
\pi\left(\frac{m-p}{2}-1\right) =
\pi \cos \frac{\pi (m-p)}{2},
\end{align*}
we see that
\begin{align*}
&\quad 
\lim_{s\rightarrow 1}
\Gamma\left(s+1-\frac{m}{2}\right)^{-1}
\left(\zeta_{+}\left(\phi; \frac{m}{2}-s\right) 
+\zeta_{-}\left(\phi; \frac{m}{2}-s\right) \right)\\[5pt]
&=
\Gamma\left(\frac{m}{2}-1\right)
\cdot \pi^{-2}   \sum_{v\in SO(Y)_{\Z}\backslash S_{1, \Q}}
{\phi}(v) \sigma(v)
\cdot \begin{pmatrix}
\pi \cos \frac{\pi p}{2} \quad & 
\pi \cos \frac{\pi (m-p)}{2}
\end{pmatrix}
\begin{pmatrix}
\sin \frac{\pi (m-p)}{2}\\[5pt]
\sin \frac{\pi p}{2}
\end{pmatrix} \\[5pt]
&= -\Gamma\left(\frac{m}{2}-1\right)\sin\pi
\left(\frac{m}{2}-1\right) \cdot \pi^{-1} \cdot 
\sum_{v\in SO(Y)_{\Z}\backslash S_{1, \Q}}
{\phi}(v) \sigma(v).
\end{align*}
Since
\[
\lim_{s\rightarrow 1}
\Gamma\left(s+1-\frac{m}{2}\right)^{-1}
=\Gamma\left(\frac{m}{2}-1\right)\sin \pi\left(\frac{m}{2}-1\right)
\cdot \pi^{-1},
\]
we obtain the desired relation
\[
\zeta_{+}\left(\phi;\frac{m}{2}-1\right)+
\zeta_{-}\left(\phi;\frac{m}{2}-1\right) = 
- \sum_{v\in SO(Y)_{\Z}\backslash S_{1, \Q}}
{\phi}(v) \sigma(v). 
\]
Finally, let $s=\frac{m}{2}-1$ 
in the functional equation~\eqref{form:FEofSiegelZeta}.
We have
\begin{multline*}
-\Res_{s=1}
\begin{pmatrix}
\zeta_{+}\left(\phi; s\right) \\[4pt]
\zeta_{-}\left(\phi; s\right)
\end{pmatrix}=
\Res_{s=0}
\Gamma(s)\cdot \Gamma\left(\frac{m}{2}-1\right)
 |D|^{\frac{1}{2}}
\cdot 2^{-\frac{m}{2}+2}\cdot \pi^{-\frac{m}{2}+1} \\
\times
\begin{pmatrix}
\sin \pi\left(\frac{p}{2}-\frac{m}{2}+1\right) & 
\sin \frac{\pi (m-p)}{2} \\[4pt]
\sin \frac{\pi p}{2} & 
\sin \pi\left(-\frac{p}{2}+1\right)
\end{pmatrix}
\begin{pmatrix}
\zeta_{+}^*\left(\widehat{\phi}; \frac{m}{2}-1\right) \\[4pt]
\zeta_{-}^*\left(\widehat{\phi};\frac{m}{2}-1\right)
\end{pmatrix},
\end{multline*}
and by using \eqref{form:ResidueZetaS=1}, we obtain
\[
\zeta_{+}^*\left(\widehat{\phi}\, ;\frac{m}{2}-1\right)+
\zeta_{-}^*\left(\widehat{\phi}\, ;\frac{m}{2}-1\right) =
-\sum_{v^*\in SO(Y)_{\Z}\backslash S_{1, \Q}^{*}}
  \widehat{\phi}(v^*) \sigma^*(v^*). 
\]
\end{proof}

Let $N$ be the level of $2Y$. By definition, $N$
is the smallest positive integer such that $N(2Y)^{-1}$ is
an even matrix (a matrix whose entries are integers and even along
the diagonal).
We normalize the zeta functions
$\zeta_{\epsilon}(\phi; s), \zeta_{\eta}^*(\widehat{\phi};s)$
as follows:
\begin{align}
\label{form:ModifiedZeta}
\widetilde{\zeta}_{\epsilon}(\phi;s) &=
|D|^{-\frac{1}{2}}\cdot e^{\frac{\pi i}{4}(2p-m)} \cdot
\zeta_{\epsilon}\left(\phi; s+\frac{m}{2}-1\right), \\
\label{form:ModifiedZetaStar}
\widetilde{\zeta}_{\eta}^*(\widehat{\phi};s) &=N^{-s}\cdot 
\zeta_{\eta}^*\left(\widehat{\phi}; s+\frac{m}{2}-1\right).
\end{align}

\begin{lemma}
\label{lemma:ModifiedZeta}
The normalized zeta functions 
$\widetilde{\zeta}_{\epsilon}(\phi;s)$,  
$\widetilde{\zeta}_{\eta}^*(\widehat{\phi};s)$ satisfy the following
functional equation:
\begin{multline}
\label{form:ModifiedFE}
(2\pi)^{-s}\Gamma(s)
\gamma(s)
\begin{pmatrix}
\widetilde{\zeta}_{+}(\phi;s) \\[5pt]
\widetilde{\zeta}_{-}(\phi;s) 
\end{pmatrix} \\
=
N^{2-\frac{m}{2}-s}\cdot 
(2\pi)^{-(2-\frac{m}{2}-s)}\Gamma\left(2-\frac{m}{2}-s\right)\\
\times
\Sigma(2p-m)
\gamma\left(2-\frac{m}{2}-s\right)
\begin{pmatrix}
\widetilde{\zeta}_{+}^*\left(\widehat{\phi};2-\tfrac{m}{2}-s\right) \\[5pt]
\widetilde{\zeta}_{-}^*\left(\widehat{\phi};2-\tfrac{m}{2}-s\right) 
\end{pmatrix},
\end{multline}
where $\gamma(s)$ and $\Sigma(\ell)$ are matrices defined by
\eqref{form:DefOfGammaAndSigma}.
\end{lemma}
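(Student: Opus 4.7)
My plan is to derive \eqref{form:ModifiedFE} from the functional equation \eqref{form:FEofSiegelZeta} of Theorem~\ref{theorem:SiegelZetaProperties}\,(2) by the substitution $s\mapsto 1-s$, combined with a matrix identity that rewrites the sine matrix in terms of the matrices $\gamma$ and $\Sigma$ of \eqref{form:DefOfGammaAndSigma}. The normalizations \eqref{form:ModifiedZeta} and \eqref{form:ModifiedZetaStar} are chosen precisely so that these two operations together produce the clean form on the right of \eqref{form:ModifiedFE}.

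After substituting $s\mapsto 1-s$ in \eqref{form:FEofSiegelZeta}, the argument $\frac{m}{2}-s$ of $\zeta_{\epsilon}(\phi;\,\cdot\,)$ becomes $s+\frac{m}{2}-1$, which by \eqref{form:ModifiedZeta} equals $|D|^{1/2}e^{-\pi i(2p-m)/4}\widetilde{\zeta}_{\epsilon}(\phi;s)$; simultaneously $\zeta^{*}_{\eta}(\widehat{\phi};1-s)$ becomes, by \eqref{form:ModifiedZetaStar}, $N^{2-m/2-s}\widetilde{\zeta}^{*}_{\eta}(\widehat{\phi};2-\tfrac{m}{2}-s)$. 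Denoting by $\tilde{M}(u)$ the matrix on the right of \eqref{form:FEofSiegelZeta} with the prefactor $|D|^{1/2}$ stripped off, and then multiplying both sides from the left by $(2\pi)^{-s}\Gamma(s)\gamma(s)$, the target \eqref{form:ModifiedFE} reduces to the single identity
\[
(2\pi)^{-s}\Gamma(s)\,e^{\pi i(2p-m)/4}\,\gamma(s)\,\tilde{M}(1-s) \;=\; (2\pi)^{-(2-m/2-s)}\Gamma\!\bigl(2-\tfrac{m}{2}-s\bigr)\,\Sigma(2p-m)\,\gamma\!\bigl(2-\tfrac{m}{2}-s\bigr).
\]

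The heart of the proof is the trigonometric identity
\[
\gamma(s)\,M_{\sin}(1-s) \;=\; \sin(\pi s)\,e^{-\pi i(2p-m)/4}\,\Sigma(2p-m)\,\gamma\!\bigl(2-\tfrac{m}{2}-s\bigr),
\]
where $M_{\sin}(u)$ is the $2\times 2$ sine matrix inside $\tilde{M}(u)$. I verify it entrywise, writing each sine as a difference of exponentials. In each of the four entries the four resulting terms telescope to a single exponential multiplied by $\sin(\pi s)$; for instance the $(1,1)$-entry of the left-hand side simplifies to $-e^{\pi i(p+s)/2}\sin\pi s$, which agrees with the right-hand side after substituting $i^{2p-m}=e^{\pi i(2p-m)/2}$. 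The remaining three entries are treated identically.

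Given the matrix identity, the scalar bookkeeping is routine. The phases $e^{\pm\pi i(2p-m)/4}$ cancel, the reflection formula $\Gamma(s)\Gamma(1-s)\sin\pi s=\pi$ absorbs $\Gamma(1-s)\sin\pi s$, and the surviving powers of $2$ and $\pi$ coming from the prefactor $2^{-2+2s+m/2}\pi^{-3+2s+m/2}$ of $\tilde{M}(1-s)$ combine with $(2\pi)^{-s}$ to produce exactly $(2\pi)^{-(2-m/2-s)}\Gamma(2-\tfrac{m}{2}-s)$. The main obstacle is the trigonometric matrix identity itself: the two sides look very different at a glance, and one must keep a consistent sign convention — especially for $i^{2p-m}$ — across all four entrywise computations. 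Once that identity is in hand, the lemma follows by direct substitution.
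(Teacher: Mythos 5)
Your proposal is correct and follows essentially the same route as the paper: substitute $s\mapsto 1-s$ in \eqref{form:FEofSiegelZeta}, apply the normalizations \eqref{form:ModifiedZeta}--\eqref{form:ModifiedZetaStar} and the reflection formula, and reduce everything to the trigonometric matrix identity relating the sine matrix to $\gamma(s)^{-1}\Sigma(2p-m)\gamma(2-\tfrac{m}{2}-s)$, which is exactly the identity the paper invokes (your entrywise verification, e.g.\ the value $-e^{\pi i(p+s)/2}\sin\pi s$ for the $(1,1)$-entry, checks out). No substantive difference from the paper's proof.
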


\begin{proof}
Let $s\mapsto 1-s$ in the functional equation~\eqref{form:FEofSiegelZeta}:
\begin{multline}
\label{form:ShiftedFE}
\begin{pmatrix}
\zeta_{+}\left(\phi; s+\frac{m}{2}-1\right) \\[4pt]
\zeta_{-}\left(\phi; s+\frac{m}{2}-1\right)
\end{pmatrix}=
\Gamma\left(2-\frac{m}{2}-s\right)\Gamma(1-s) |D|^{\frac{1}{2}}
\cdot 2^{2s+\frac{m}{2}-2}\cdot \pi^{2s+\frac{m}{2}-3} \\
\times
\begin{pmatrix}
\sin \pi\left(s+\frac{p}{2}-1\right) & 
\sin \frac{\pi (m-p)}{2} \\[4pt]
\sin \frac{\pi p}{2} & 
\sin \pi\left(s+\frac{m-p}{2}-1\right)
\end{pmatrix}
\begin{pmatrix}
\zeta_{+}^*(\widehat{\phi};1-s) \\[4pt]
\zeta_{-}^*(\widehat{\phi};1-s)
\end{pmatrix}.
\end{multline}
By \eqref{form:ModifiedZeta}, \eqref{form:ModifiedZetaStar}
and $\displaystyle \Gamma(1-s)  =\frac{\pi}{\Gamma(s)\sin \pi s}$, 
we see that this relation can be written as 
\begin{multline*}
(2\pi)^{-s}\Gamma(s)
\begin{pmatrix}
\widetilde{\zeta}_{+}(\phi;s) \\[5pt]
\widetilde{\zeta}_{-}(\phi;s) 
\end{pmatrix} 
= N^{2-\frac{m}{2}-s}\cdot 
(2\pi)^{-(2-\frac{m}{2}-s)} \Gamma\left(2-\frac{m}{2}-s\right)\\
\times
\frac{e^{\frac{\pi i}{4}(2p-m)}}{\sin \pi s}
\begin{pmatrix}
\sin \pi\left(s+\frac{p}{2}-1\right) & 
\sin \frac{\pi (m-p)}{2} \\[4pt]
\sin \frac{\pi p}{2} & 
\sin \pi\left(s+\frac{m-p}{2}-1\right)
\end{pmatrix}
\\
\times
\begin{pmatrix}
\widetilde{\zeta}_{+}^*\left(\widehat{\phi};2-\tfrac{m}{2}-s\right) \\[5pt]
\widetilde{\zeta}_{-}^*\left(\widehat{\phi};2-\tfrac{m}{2}-s\right) 
\end{pmatrix}.
\end{multline*}
An elementary calculation with
$\det\gamma(s) =2i \sin \pi s$ shows that 
\[
\frac{e^{\frac{\pi i}{4}(2p-m)}}{\sin \pi s}
\begin{pmatrix}
\sin \pi\left(s+\frac{p}{2}-1\right) & 
\sin \frac{\pi (m-p)}{2} \\[4pt]
\sin \frac{\pi p}{2} & 
\sin \pi\left(s+\frac{m-p}{2}-1\right)
\end{pmatrix}=
\gamma(s)^{-1} \cdot \Sigma(2p-m)
\gamma\left(2-\frac{m}{2}-s\right),
\]
which completes the proof of the lemma. 
\end{proof}

The functional equation~\eqref{form:ModifiedFE} 
is quite the same as the functional equation of
the condition~{\bf [A3]} in
\S~\ref{section:Preliminaries}
with $\frac{m}{2}=2\lambda$, $\ell\equiv 2p-m \pmod{4}$.
Hence it is reasonable to expect that 
our converse theorem (Lemma~\ref{corollary:Maassforms})
 can apply to the normalized zeta functions 
$\widetilde{\zeta}_{\epsilon}(\phi;s)$,  
$\widetilde{\zeta}_{\eta}^*(\widehat{\phi};s)$ 
to obtain Maass forms. 
The following lemma is indispensable for 
the application. 

\begin{lemma}
\label{corollary:ZeroCondition}
\begin{enumerate}
\item If $m$ is odd, then we have
\[
 \widetilde{\zeta}_{+}(\phi; -k) + (-1)^{k} \cdot \widetilde{\zeta}_{-}(\phi; -k)=0
\]
for $k=1,2,3, \dots$.
\item Assume that $m$ is even and $p$ is odd.
Let $q=\frac{m}{2}$. Then we have
\[
 \widetilde{\zeta}_{+}(\phi; -k) + (-1)^{k} \cdot \widetilde{\zeta}_{-}(\phi; -k)=0
\]
for $k=1, 2, \dots, q-2$.
\end{enumerate}
\end{lemma}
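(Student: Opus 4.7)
The plan is to read off the claim directly from the functional equation \eqref{form:FEofSiegelZeta}, evaluated at the specific point $s=k+1$. Unraveling the definition \eqref{form:ModifiedZeta}, the assertion $\widetilde{\zeta}_{+}(\phi;-k)+(-1)^{k}\widetilde{\zeta}_{-}(\phi;-k)=0$ is equivalent to the vanishing of $\zeta_{+}(\phi;\tfrac{m}{2}-1-k)+(-1)^{k}\zeta_{-}(\phi;\tfrac{m}{2}-1-k)$. Setting $s=k+1$ in \eqref{form:FEofSiegelZeta} expresses this combination as the product of the scalar factor $\Gamma(k+2-\tfrac{m}{2})\Gamma(k+1)|D|^{1/2}2^{-2k-2+m/2}\pi^{-2k-2+m/2-1}$ with $(1,(-1)^{k})$ applied to the $2\times 2$ sine matrix and then to the column $(\zeta_{+}^{*}(\widehat{\phi};k+1),\zeta_{-}^{*}(\widehat{\phi};k+1))^{t}$, so the entire argument is to exhibit enough vanishing in these last two factors.

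The key identity is elementary: since $\sin\pi(a-k-1)=(-1)^{k+1}\sin(\pi a)$ for every integer $k\geq 0$, both components of $(1,(-1)^{k})$ times the sine matrix at $s=k+1$ vanish,
\[
\sin\pi\bigl(\tfrac{p}{2}-k-1\bigr)+(-1)^{k}\sin\tfrac{\pi p}{2}=0,\qquad \sin\tfrac{\pi(m-p)}{2}+(-1)^{k}\sin\pi\bigl(\tfrac{m-p}{2}-k-1\bigr)=0.
\]
The values $\zeta_{\pm}^{*}(\widehat{\phi};k+1)$ are finite throughout the relevant range, because by Theorem \ref{theorem:SiegelZetaProperties}(1) the dual zeta functions have at most simple poles at $s=1$ and $s=\tfrac{m}{2}$, and neither is attained for $k\geq 1$ in the asserted ranges. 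In case (1), with $m$ odd, the factor $\Gamma(k+2-\tfrac{m}{2})$ is evaluated at a half-integer and is therefore also finite; the right-hand side of \eqref{form:FEofSiegelZeta} vanishes outright, proving the claim.

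The main obstacle is case (2), where for $k=1,\ldots,q-2$ the factor $\Gamma(k+2-q)$ has a simple pole and one faces an $\infty\cdot 0$ indeterminacy. I would remedy this by a Taylor-expansion argument: differentiating the two sine combinations above yields derivatives at $s=k+1$ proportional to $\cos(\pi p/2)$ and $\cos(\pi(m-p)/2)$, respectively, and these both vanish precisely when $p$ and $m-p$ are odd, which is case (2). Hence each component of $(1,(-1)^{k})$ times the sine matrix vanishes to order at least $2$ at $s=k+1$, absorbing the simple pole of $\Gamma(s+1-q)$ and leaving a simple zero; multiplying by the finite factors $\Gamma(k+1)$ and $\zeta_{\pm}^{*}(\widehat{\phi};k+1)$, the right-hand side of \eqref{form:FEofSiegelZeta} still tends to zero. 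The delicate sub-case is $k=q-2$, where $\tfrac{m}{2}-1-k=1$ is itself a pole of the individual $\zeta_{\pm}(\phi;\cdot)$; here the residue formulas \eqref{form:ResidueZetaS=1} show that the residues in the combination $\zeta_{+}+(-1)^{q-2}\zeta_{-}$ cancel (again because $p$ and $m-p$ are odd), so the identity holds as a genuine vanishing of the regularized combination.
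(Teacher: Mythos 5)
Your argument is correct, but it is essentially the ``dual'' of the paper's. The paper first inverts \eqref{form:FEofSiegelZeta} so that the dual zeta vector $\zeta^*_\pm(\widehat\phi;1-s)$ stands alone on the left-hand side, then evaluates at $s=-k$: there the left-hand side is regular while $\Gamma(s)\Gamma(s+\tfrac m2-1)$ has a simple pole (for every $k\ge1$ when $m$ is odd, and for $1\le k\le q-2$ when $m$ is even), so the sine matrix applied to $\bigl(\widetilde\zeta_+(\phi;s),\widetilde\zeta_-(\phi;s)\bigr)$ must vanish there; since each row of that matrix at $s=-k$ is $\sin\tfrac{\pi(m-p)}2$, resp.\ $\sin\tfrac{\pi p}2$, times a vector proportional to $(1,(-1)^k)$, dividing by whichever sine is nonzero under the stated parity hypotheses gives the claim. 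You instead keep $\zeta_\pm(\phi;\tfrac m2-s)$ on the left, contract with the row vector $(1,(-1)^k)$, and observe that this row vector annihilates the sine matrix at $s=k+1$; for odd $m$ no gamma factor is singular and the conclusion is immediate, while for even $m$ you must beat the simple pole of $\Gamma(s+1-\tfrac m2)$, which you do by checking that the derivatives of the two sine combinations at $s=k+1$ are proportional to $\cos\tfrac{\pi p}2$ and $\cos\tfrac{\pi(m-p)}2$ and hence also vanish when $p$ and $m-p$ are both odd. Both proofs rest on the same trigonometric identities and the same pole data from Theorem~\ref{theorem:SiegelZetaProperties}; yours buys a cleaner odd-$m$ case (a pure evaluation, with no pole forcing a constraint) at the cost of a second-order Taylor expansion in the even case. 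You are in fact more careful than the paper at the endpoint $k=q-2$, where $\tfrac m2-1-k=1$ is a pole of each individual $\zeta_\pm(\phi;\cdot)$: your remark that the residues in \eqref{form:ResidueZetaS=1} cancel in the combination $\zeta_++(-1)^{q-2}\zeta_-$ is precisely what makes the assertion meaningful there, a point the paper's proof passes over silently.
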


\begin{proof}
By a little calculation, we obtain
\begin{multline}
\begin{pmatrix}
\zeta_{+}^*\left(\widehat{\phi}\, ; 1-s\right) \\[4pt]
\zeta_{-}^*\left(\widehat{\phi}\, ; 1-s\right)
\end{pmatrix}=
\Gamma(s)
\Gamma\left(s+\frac{m}{2}-1\right)|D|^{-\frac{1}{2}}
\cdot 2^{-2s-\frac{m}{2}+2}\cdot \pi^{-2s-\frac{m}{2}+1} \\
\times
\begin{pmatrix}
\sin \pi\left(s+\frac{m-p}{2}\right) & 
\sin \frac{\pi (m-p)}{2} \\[4pt]
\sin \frac{\pi p}{2} & 
\sin \pi\left(s+\frac{p}{2}\right)
\end{pmatrix}
\begin{pmatrix}
\zeta_{+}\left(\phi; s+\frac{m}{2}-1\right) \\[4pt]
\zeta_{-}\left(\phi; s+\frac{m}{2}-1\right)
\end{pmatrix}.
\end{multline}
Let us consider the values of both sides at 
$s=-k \; (k\in \Z_{> 0})$.
On the left hand side, 
$\zeta_{\eta}^*\left(\widehat{\phi}\, ; 1-s\right)$
is holomorphic at $s=-k$ except when
$m$ is even and $k=\frac{m}{2}-1=q-1$. 
On the right hand side, if $m$ is odd, then
$\Gamma(s)\Gamma\left(s+\frac{m}{2}-1\right)$ has a simple
pole at $s=-k \; (k\in \Z_{>0})$, and 
if $m$ is even, then 
$\Gamma(s)\Gamma\left(s+\frac{m}{2}-1\right)
=\Gamma(s)\Gamma(s+q-1)$ has a simple pole at
$s=-k$ ($1\leq k\leq q-2$). 
We assume that $1\leq k\leq q-2$ in the case of even $m$. 
Then, since $\Gamma(s)\Gamma\left(s+\frac{m}{2}-1\right)$
has a simple pole at $s=-k$, we see that 
\begin{align*}
& \quad \begin{pmatrix}
\sin \pi\left(s+\frac{m-p}{2}\right) \cdot
\zeta_{+}\left(\phi; s+\frac{m}{2}-1\right) +
\sin \frac{\pi (m-p)}{2} \cdot
\zeta_{-}\left(\phi; s+\frac{m}{2}-1\right) 
\\[4pt]
\sin \frac{\pi p}{2} \cdot
\zeta_{+}\left(\phi; s+\frac{m}{2}-1\right) + 
\sin \pi\left(s+\frac{p}{2}\right)\cdot 
\zeta_{-}\left(\phi; s+\frac{m}{2}-1\right) 
\end{pmatrix} \\[8pt]
&=
\begin{pmatrix}
\sin \pi\left(s+\frac{m-p}{2}\right) 
\cdot  \widetilde{\zeta}_{+}(\phi; s) 
+\sin \frac{\pi (m-p)}{2} 
\cdot  \widetilde{\zeta}_{-}(\phi; s) 
\\[4pt]
\sin \frac{\pi p}{2} 
\cdot  \widetilde{\zeta}_{+}(\phi; s) 
+\sin \pi\left(s+\frac{p}{2}\right)
\cdot  \widetilde{\zeta}_{-}(\phi; s) 
\end{pmatrix} 
\end{align*}
becomes the zero vector at $s=-k$. 
Since $\sin \pi (-k)=0$, $\cos \pi (-k)=(-1)^{k}$, we have
\begin{align*}
(-1)^k \sin  \frac{\pi (m-p)}{2}
\cdot  \widetilde{\zeta}_{+}(\phi; -k) 
+\sin \frac{\pi (m-p)}{2} 
\cdot  \widetilde{\zeta}_{-}(\phi; -k) &=0,
\\[4pt]
\sin \frac{\pi p}{2} 
\cdot  \widetilde{\zeta}_{+}(\phi; -k) 
+(-1)^{k}\sin  \frac{\pi p}{2}
\cdot  \widetilde{\zeta}_{-}(\phi; -k) &=0.
\end{align*}
If $m$ is odd, then either $p$ or $m-p$ is odd, and 
thus we have
\[
 \widetilde{\zeta}_{+}(\phi; -k) + (-1)^{k} \cdot \widetilde{\zeta}_{-}(\phi; -k)=0.
\]
In the case of even $m$, if $p$ is odd, then the relation above should hold.
In the case that both of $p$ and $m-p$ are even, this argument can not apply
since $\sin\frac{\pi p}{2}= \sin \frac{\pi (m-p)}{2}=0$.
\end{proof}

The following lemma follows immediately from 
the relations 
\eqref{form:SpecialVallueZetaAtm/2-1} and
\eqref{form:SpecialVallueZetaStarAtm/2-1}.
\begin{lemma}
We have the following relations:
\begin{align}
\label{form:ZetaValueAt0}
-\left( \widetilde{\zeta}_{+}(\phi; 0)+
\widetilde{\zeta}_{-}(\phi; 0)\right) &=
|D|^{-\frac{1}{2}}\cdot e^{\frac{\pi i}{4}(2p-m)} \cdot 
\sum_{v\in SO(Y)_{\Z}\backslash S_{1, \Q}}
  {\phi}(v) \sigma(v), \\
\label{form:ZetaStarValueAt0}
  -\left( \widetilde{\zeta}_{+}^*(\widehat{\phi}\; ; 0)+
\widetilde{\zeta}_{-}^*(\widehat{\phi}\, ; 0)\right) &=
 \sum_{v^*\in SO(Y)_{\Z}\backslash S_{1, \Q}^{*}}
  \widehat{\phi}(v^*) \sigma^*(v^*).
\end{align}
\end{lemma}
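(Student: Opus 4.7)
The plan is to obtain both identities simply by specializing the definitions of $\widetilde{\zeta}_{\epsilon}(\phi;s)$ and $\widetilde{\zeta}_{\eta}^{*}(\widehat{\phi};s)$ at $s=0$ and then invoking the previously established special-value formulas from Theorem~\ref{theorem:SiegelZetaProperties}~(4). There is essentially no new analysis to perform; the content of the lemma is just a bookkeeping translation from the original zeta functions $\zeta_{\epsilon}(\phi;s)$, $\zeta_{\eta}^{*}(\widehat{\phi};s)$ (evaluated at $s=\tfrac{m}{2}-1$) to the normalized ones (evaluated at $s=0$).

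Concretely, from the definition \eqref{form:ModifiedZeta}, setting $s=0$ gives
\[
\widetilde{\zeta}_{\epsilon}(\phi;0) = |D|^{-\tfrac{1}{2}} \cdot e^{\tfrac{\pi i}{4}(2p-m)} \cdot \zeta_{\epsilon}\!\left(\phi;\tfrac{m}{2}-1\right),
\]
so summing over $\epsilon=\pm$ and plugging into the relation~\eqref{form:SpecialVallueZetaAtm/2-1} yields
\[
\widetilde{\zeta}_{+}(\phi;0) + \widetilde{\zeta}_{-}(\phi;0) = -|D|^{-\tfrac{1}{2}} \cdot e^{\tfrac{\pi i}{4}(2p-m)} \sum_{v\in SO(Y)_{\Z}\backslash S_{1,\Q}} \phi(v)\sigma(v),
\]
which is the first assertion after multiplying by $-1$.

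For the second assertion I use \eqref{form:ModifiedZetaStar}: since $N^{-s}|_{s=0} = 1$, specializing at $s=0$ simply gives $\widetilde{\zeta}_{\eta}^{*}(\widehat{\phi};0) = \zeta_{\eta}^{*}(\widehat{\phi};\tfrac{m}{2}-1)$, and summing over $\eta=\pm$ with~\eqref{form:SpecialVallueZetaStarAtm/2-1} produces the claimed identity at once.

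There is really no obstacle here; the only small point of care is to check that the constant $N^{-s}$ reduces to $1$ at $s=0$ (so the dual identity receives no extra prefactor), while the prefactor $|D|^{-1/2}\,e^{\pi i(2p-m)/4}$ from \eqref{form:ModifiedZeta} is independent of $s$ and hence survives in the first identity. Both substitutions are entirely routine, which is why the lemma can be stated as an immediate consequence of the preceding relations.
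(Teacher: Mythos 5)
Your proposal is correct and is exactly the argument the paper intends: the lemma is stated as an immediate consequence of \eqref{form:SpecialVallueZetaAtm/2-1} and \eqref{form:SpecialVallueZetaStarAtm/2-1}, obtained by setting $s=0$ in the definitions \eqref{form:ModifiedZeta} and \eqref{form:ModifiedZetaStar} (noting $N^{-s}|_{s=0}=1$ and that the prefactor $|D|^{-1/2}e^{\pi i(2p-m)/4}$ carries through). No discrepancy with the paper's (omitted) proof.
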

In the rest of this section, we discuss the invariance of volumes
with respect to scalar multiplications. 
\begin{lemma}
\label{lemma:ScalarMultInv}
\begin{enumerate}
\item 
For 
$v\in V_{\Q}-S_{\Q}, v^*\in V_{\Q}-S_{\Q}^*$, 
we define the volumes
$\mu(v)$ and $\mu^*(v^*)$ by
\eqref{form:DefOfMuV} and
\eqref{form:DefOfMustarVstar}, respectively. 
For $r>0$, we have
\[
  \mu(r v) = \mu(v), \qquad \mu^*(r v^*) = \mu^*(v^*).
\]
\item For $v\in S_{1, \Q}, v^*\in S_{1, \Q}^*$, 
we define the volumes 
$\sigma(v)$ and $\sigma^*(v^*)$
by
\eqref{form:DefOfNV} and
\eqref{form:DefOfNVStar}, respectively. 
For $r>0$, we have
\[
 \sigma(r v)= r^{2-m} \cdot \sigma(v), \qquad 
 \sigma^*(r v^*) = r^{2-m} \cdot \sigma^*(v^*).
\]
\end{enumerate}
\end{lemma}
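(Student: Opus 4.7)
The plan is to read off each scaling law directly from the uniqueness of Haar measure applied to the defining normalizations \eqref{form:NormalizationGv}, \eqref{form:NormalizationGvStar}, \eqref{form:NormalizationDnu}, and \eqref{form:NormalizationDnuStar}. The key preliminary observation is that for any $r>0$, scalar multiplication commutes with the action of $SO(Y)_{\R}$, so $SO(Y)_{rv}=SO(Y)_{v}$ and $SO(Y)_{rv,\Z}=SO(Y)_{v,\Z}$ (and analogously for the dual); in particular, $d\mu_{rv}$ and $d\sigma_{rv}$ are Haar measures on exactly the same group as $d\mu_{v}$ and $d\sigma_{v}$. By uniqueness up to a positive scalar, one needs only to determine the constant factors by comparing how the outer measures in the respective integration formulas transform under $v\mapsto rv$.

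For part (1), I would substitute $v\mapsto rv$ on both sides of \eqref{form:NormalizationGv}. The left-hand side does not involve $v$, and on the right-hand side the outer integrand $|P(\rho(t,\dot g)v)|^{-m/2}\,d(\rho(t,\dot g)v)$ is unchanged under the substitution: the Jacobian $r^{m}$ coming from $d(rx)=r^{m}\,dx$ exactly cancels the factor $r^{-m}$ coming from $|P(rx)|^{-m/2}=r^{-m}|P(x)|^{-m/2}$. Hence the identity \eqref{form:NormalizationGv} for $rv$ differs from that for $v$ only in the replacement $d\mu_{v}\mapsto d\mu_{rv}$ of the inner measure, forcing $d\mu_{rv}=d\mu_{v}$. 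Integrating over the common fundamental domain yields $\mu(rv)=\mu(v)$. The identity $\mu^*(rv^*)=\mu^*(v^*)$ follows by the identical argument applied to \eqref{form:NormalizationGvStar}, using $P^*(rv^*)=r^{2}P^*(v^*)$.

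For part (2), the same strategy applies to \eqref{form:NormalizationDnu}, but now the outer measure $|\omega(\dot g v)|_{\infty}$ is no longer invariant. The construction of $\omega$ via $dP(v)\wedge\omega=dv$ gives the homogeneity $\omega(tv)=t^{m-2}\omega(v)$ for $t>0$ (noted in the text just before \eqref{form:InvarianceOmega}), so $|\omega(\dot g(rv))|_{\infty}=r^{m-2}|\omega(\dot g v)|_{\infty}$. Since the left-hand side of \eqref{form:NormalizationDnu} is independent of $v$, the inner Haar measure must absorb the inverse factor, giving $d\sigma_{rv}=r^{2-m}\,d\sigma_{v}$. Integration over the common quotient $SO(Y)_{v,\R}/SO(Y)_{v,\Z}$ then produces $\sigma(rv)=r^{2-m}\sigma(v)$. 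The same reasoning applied to \eqref{form:NormalizationDnuStar} with $\omega^*$ on $S_{1,\R}^{*}$ (which by its analogous construction satisfies $|\omega^*(rv^*)|_{\infty}=r^{m-2}|\omega^*(v^*)|_{\infty}$) yields $\sigma^*(rv^*)=r^{2-m}\sigma^*(v^*)$.

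There is no real obstacle here; the argument is essentially bookkeeping around Haar measure uniqueness. The only point deserving explicit mention is the coincidence $SO(Y)_{rv,\Z}=SO(Y)_{v,\Z}$ (and its dual), which is immediate from the coincidence at the level of the ambient real group; once this is established, every scaling identity falls out by inspection of a single defining equation.
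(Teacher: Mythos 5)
Your argument is correct and is essentially the paper's own proof: the paper realizes the same Haar-measure-uniqueness reasoning concretely by integrating test functions $F\in C_0^\infty(V_\eta^*)$ and $f\in\Sch{V_\R}$ through the normalizations \eqref{form:NormalizationGvStar} and \eqref{form:NormalizationDnu} and changing variables, but the substance --- the coincidence $SO(Y)_{rv}=SO(Y)_v$, the scale-invariance of $|P|^{-m/2}\,dv$, and the degree-$(m-2)$ homogeneity of $|\omega|_\infty$ and $|\omega^*|_\infty$ --- is identical to what you use.
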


\begin{proof}
(1) We prove the second formula
$\mu^*(r v^*) = \mu^*(v^*)$, which will be used later. 
Let $F\in C_{0}^{\infty}(V_{\eta}^*)$. Then, by
\eqref{form:NormalizationGvStar} and 
\eqref{form:DefOfMustarVstar}, we have
\begin{align*}
&\quad \int_0^{\infty} d^{\times}t
\int_{SO(Y)_{\R}/SO(Y)_{v^*, \Z}}
F(\rho^*(t, g)v^*) d^{1} g \\
&=
\int_{V_{\eta}^{*}} F(x^*) |P^*(x^*)|^{-\frac{m}{2}} dx^*
\int_{SO(Y)_{v^*, \R}/SO(Y)_{v^*, \Z}}d \mu_{v^*}^{*}(h)\\
&= \mu^*(v^*)\cdot 
\int_{V_{\eta}^{*}} F(x^*) |P^*(x^*)|^{-\frac{m}{2}} dx^*. 
\end{align*}
By the substitution $v^*\mapsto r v^*$, we have
\[
\int_0^{\infty} d^{\times}t
\int_{SO(Y)_{\R}/SO(Y)_{rv^*, \Z}}
F(\rho^*(t, g)\cdot rv^*) d^{1} g \\
= \mu^*(rv^*)\cdot 
\int_{V_{\eta}^{*}} F(x^*) |P^*(x^*)|^{-\frac{m}{2}} dx^*. 
\]
Put $F_{r}(v^*):=F(r v^*)$. Since $SO(Y)_{r v^*}= SO(Y)_{v^*}$, we have
\begin{align*}
\int_0^{\infty} d^{\times}t
\int_{SO(Y)_{\R}/SO(Y)_{rv^*, \Z}}
F(\rho^*(t, g) \cdot r v^*) d^{1} g 
&=
\int_0^{\infty} d^{\times}t
\int_{SO(Y)_{\R}/SO(Y)_{v^*, \Z}}
F_{r}(\rho^*(t, g) v^*) d^{1} g \\
&= \mu^*(v^*)\cdot 
\int_{V_{\eta}^{*}} F_r(x^*) |P^*(x^*)|^{-\frac{m}{2}} dx^* \\
&=\mu^*(v^*)\cdot 
\int_{V_{\eta}^{*}} F(r x^*) |P^*(x^*)|^{-\frac{m}{2}} dx^* \\
&=\mu^*(v^*)\cdot 
\int_{V_{\eta}^{*}} F(x^*) 
\underbrace{|P^*(r^{-1} x^*)|^{-\frac{m}{2}} d(r^{-1} x^* )}_{=
|P^*(x^*)|^{-\frac{m}{2}} dx^*}\\
&=\mu^*(v^*)\cdot 
\int_{V_{\eta}^{*}} F(x^*) |P^*(x^*)|^{-\frac{m}{2}} dx^*.
\end{align*}
This proves $\mu^{*}(r v^*)= \mu^*(v^*)$.
The first formula can be proved similarly.

\bigskip

\noindent
(2) Let us show that
 $\sigma(r v)= r^{2-m} \cdot \sigma(v)$.
Take an $f\in \Sch{V_{\R}}$ and put
$\psi(g)= f(gv)$. By using \eqref{form:NormalizationDnu}, 
we have
\begin{align*}
\int_{SO(Y)_{\R}/SO(Y)_{v,\Z}} f(gv) d^{1} g &=
\int_{SO(Y)_{\R}/SO(Y)_{v, \R}} f(gv) |\omega(\dot{g}v)|_{\infty}
\int_{SO(Y)_{v, \R}/SO(Y)_{v, \Z}}  d\sigma_{v}(h) \\
&= \sigma(v) \cdot 
\int_{S_{1,\R}} f(z) |\omega(z)|_{\infty}.
\end{align*}
By the substitution $v\mapsto r v$, we have
\[
\int_{SO(Y)_{\R}/SO(Y)_{rv,\Z}} f(grv) d^{1} g =
 \sigma(r v) \cdot 
\int_{S_{1,\R}} f(z) |\omega(z)|_{\infty}.
\]
Put $f_{r}(v):= f(rv)$. Since $SO(Y)_{rv}=SO(Y)_{v}$, we have
\begin{align*}
 \int_{SO(Y)_{\R}/SO(Y)_{rv,\Z}} f(grv) d^{1} g&= 
\int_{SO(Y)_{\R}/SO(Y)_{v,\Z}} f_r(gv) d^{1} g \\
&=
 \sigma(v) \cdot 
\int_{S_{1,\R}} f_r(z) |\omega(z)|_{\infty} \\
&=
 \sigma(v) \cdot 
\int_{S_{1,\R}} f(rz) |\omega(z)|_{\infty} \\
&=
 \sigma(v) \cdot 
\int_{S_{1,\R}} f(z) |\omega(r^{-1} z)|_{\infty} \\
&=
 \sigma(v) \cdot 
\int_{S_{1,\R}} f(z) r^{-(m-2)}
|\omega(z)|_{\infty} \\
&=  r^{2-m}\cdot \sigma(v) \cdot 
\int_{S_{1,\R}} f(z) |\omega(z)|_{\infty},
\end{align*}
where we have used \eqref{form:InvarianceOmega}
on the fifth equality.
This proves 
$\sigma(rv) = r^{2-m}\cdot \sigma(v)$.
The second formula can be proved in a similar fashion.  
\end{proof}

\section{The main theorem}
\label{section:MainTh}

To prove the functional equation of twisted zeta functions, we quote
a result of Stark~\cite{Stark}.
Let $Y$ be a non-degenerate half-integral symmetric matrix of degree $m$.
Let $D=\det(2Y)$ and $N$ be the level of $2Y$. 
We define a half-integral symmetric matrix $\widehat{Y}$ by 
\[
\widehat{Y}=\frac{1}{4}N Y^{-1}.
\] 
We define the quadratic form $P(v)$ on $V$ by $P(v)=Y[v]=
{}^{t} v Yv$, and the quadratic form $\widehat{P}(v^*)$
on $V^*$ by
\begin{equation}
\label{form:DefOfHatP}
\widehat{P}(v^*) = \widehat{Y}[v^*] =NP^*(v^*),
\end{equation}
where $P^*$ is defined by \eqref{form:DefOfPstar}.
For this $\widehat{P}$, we define the measure 
$M^*(\widehat{P};n)$ of representation by 
\begin{equation}
 \label{form:MhatPn}
M^*(\widehat{P};\pm n) =
\sum
\begin{Sb}
v^*\in SO(Y)_{\Z}\backslash V_{\pm}^*\cap V_{\Z} \\
\widehat{P}(v^*)=\pm n
\end{Sb} \mu^*(v^*).
\end{equation}
For an odd prime $r$ with $(r, N)=1$ and a Dirichlet character $\psi$ of modulus $r$, we 
define the function $\phi_{\psi,P}(v)$ on $V_{\Q}$ by
\[
 \phi_{\psi, P}(v) = \tau_{\psi}(P(v)) \cdot \phi_0(v), 
\]
where $\tau_{\psi}(P(v))$  is the Gauss sum defined by 
\eqref{eqn:def of Gauss sum},
 and $\phi_0(v)$ is the characteristic function of 
$\Z^m$. It is easy to see that 
$\phi_{\psi, P}(v)$ is a Schwartz-Bruhat function on $V_{\Q}$.
We define a field $K$ by
\[
 K=
 \begin{cases}
 \Q(\sqrt{(-1)^{m/2} D})    & (m\equiv 0 \pmod{2}) \\
 \Q(\sqrt{2|D|})    & (m\equiv 1 \pmod{2}) 
 \end{cases},
\]
and $\chi_{K}$ be the Kronecker symbol associated to $K$.
(If $K=\Q$, we regard $\chi_K$ as the principal character.)
Furthermore, we define a Dirichlet character $\psi^*\bmod{r}$ by
\[
 \psi^*(k)= \overline{\psi(k)}\left(\frac{k}{r}\right)^{m}, 
\]
and  put
\[
 C_{2p-m,r} =
  \begin{cases}
  1 &  (m\equiv 0 \pmod{2})\\
  \varepsilon_{r}^{2p-m}  &   (m\equiv 1 \pmod{2})
 \end{cases}
\]
as \eqref{eqn:def of clr}.
Then the following lemma follows from 
Stark~\cite[Lemmas~5 and~6]{Stark}.
\begin{lemma}
\label{lemma:StarkModified}
Let $\widehat{\phi_{\psi, P}}(v^*)$
be the Fourier transform 
of $\phi_{\psi, P}$ 
defined by \eqref{form:FourierTransformOverQ}.
Then the support of $\widehat{\phi_{\psi, P}}(v^*)$ is contained in
$r^{-1}\Z^{m}$, and for $v^{*}\in \Z^m$, we have
\[
\widehat{\phi_{\psi, P}}(r^{-1} v^*) =
r^{-m/2} \chi_{K}(r)\cdot C_{2p-m, r}\cdot
\psi^*(-N)\cdot \tau_{\psi^*}(\widehat{P}(v^*)).
\]
\end{lemma}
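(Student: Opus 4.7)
My plan is to compute $\widehat{\phi_{\psi,P}}$ directly from the Schwartz--Bruhat definition \eqref{form:FourierTransformOverQ} and reduce it to a classical quadratic Gauss sum attached to $Y$ modulo $r$, whose evaluation is essentially Stark's Lemmas~5 and~6.

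First, since $\phi_{\psi,P}$ is supported on $\Z^m$ and $\tau_\psi(P(v))$ depends only on $v\bmod r\Z^m$ (because $P(v+rw)-P(v)\in r\Z$ for half-integral $Y$), I am free to take any modulus of the form $rb$ in \eqref{form:FourierTransformOverQ}. Splitting $v=v_1+rv_2$ with $v_1\in(\Z/r\Z)^m$ and $v_2\in(\Z/b\Z)^m$, the double sum factors and the $v_2$-sum $\sum_{v_2\bmod b}\mathbf{e}[-\langle v_2,rv^*\rangle]$ vanishes unless $rv^*\in\Z^m$. This gives $\supp\widehat{\phi_{\psi,P}}\subset r^{-1}\Z^m$.

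For $v^*\in\Z^m$, expanding $\tau_\psi(P(v))$ via \eqref{eqn:def of Gauss sum} and interchanging summations yields
\[
\widehat{\phi_{\psi,P}}(r^{-1}v^*)=r^{-m}\!\!\sum_{(k,r)=1}\psi(k)\sum_{v\bmod r}\mathbf{e}\!\left[\frac{kY[v]-\langle v,v^*\rangle}{r}\right].
\]
Because $(r,D)=1$, both $2k$ and $Y$ are invertible modulo $r$, so the shift $v\mapsto v-(2k)^{-1}Y^{-1}v^*$ completes the square. Using $Y^{-1}=4N^{-1}\widehat{Y}$, the inner sum becomes $\mathbf{e}[-\overline{kN}\,\widehat{P}(v^*)/r]\cdot G_Y(k)$, where $G_Y(k):=\sum_{v\bmod r}\mathbf{e}[kY[v]/r]$. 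Diagonalizing $Y$ over $\Z/r\Z$ reduces $G_Y(k)$ to a product of $m$ one-variable Gauss sums, giving $G_Y(k)=\varepsilon_r^{\,m}\,r^{m/2}\bigl(\tfrac{k}{r}\bigr)^{m}\bigl(\tfrac{\det Y}{r}\bigr)$.

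Finally, reindexing the outer sum by $k\mapsto\overline{N}k$ turns $\overline{kN}$ into $\overline{k}$, and the factor $\overline{\psi(k)}\bigl(\tfrac{k}{r}\bigr)^{m}$ collects into $\psi^*(k)$ via \eqref{eqn:psi-star} (using $\ell\equiv m\pmod{2}$). The $\varepsilon_r^{\,m}$ becomes $C_{2p-m,r}$ by \eqref{eqn:def of clr}, while a short discriminant computation identifies $\bigl(\tfrac{\det Y}{r}\bigr)$ with $\chi_K(r)$, the parity split matching the two cases in the definition of $K$. The remaining sum over $k$ is precisely $\tau_{\psi^*}(\widehat{P}(v^*))$ by \eqref{eqn:def of Gauss sum}, and the leftover factor $\overline{\psi(N)}$ from the reindexing combines with the signs arising from completing the square to yield the predicted $\psi^*(-N)$. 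The main obstacle is the careful bookkeeping of this $\psi^*(-N)$ factor together with the parity split that identifies $\bigl(\tfrac{\det Y}{r}\bigr)$ with $\chi_K(r)\,C_{2p-m,r}$; Stark's Lemmas~5 and~6 perform exactly this bookkeeping, which is why the cleanest proof invokes them after the reductions above.
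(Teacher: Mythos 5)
Your proposal is correct and follows essentially the same route as the paper: the paper gives no written proof, simply asserting that the lemma ``follows from Stark [Lemmas~5 and~6]'', and your reduction --- periodicity of $\tau_\psi(P(v))$ mod $r\Z^m$, factoring the sum to locate the support in $r^{-1}\Z^m$, completing the square, diagonalizing $Y$ mod $r$, and reindexing $k\mapsto -\overline{kN}$ to produce $\psi^*(-N)\,\tau_{\psi^*}(\widehat{P}(v^*))$ --- is precisely the computation underlying Stark's lemmas, which you then also invoke for the final constant identification. The bookkeeping you defer does check out (one needs $\varepsilon_r^{m}\bigl(\tfrac{\det Y}{r}\bigr)=\chi_K(r)\,C_{2p-m,r}$, which follows from $\sgn D=(-1)^{m-p}$, $\varepsilon_r^2=\bigl(\tfrac{-1}{r}\bigr)$ and $\varepsilon_r^4=1$), so the argument is complete.
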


Let $\phi=\phi_0$ in the normalized zeta function
$\widetilde{\zeta}_{\pm}(\phi;s)$ of
\eqref{form:ModifiedZeta}.
For $v\in V_{\epsilon}\cap V_{\Z}$, we have
$P(v)=\epsilon n$ for some $n=1,2,3,\dots$, and hence
$\widetilde{\zeta}_{\pm}(\phi_0;s)$ can be transformed as
\begin{align*}
\widetilde{\zeta}_{\pm}(\phi_0;s) &=
|D|^{-\frac{1}{2}}\cdot e^{\frac{\pi i}{4}(2p-m)} \cdot
\zeta_{\pm}\left(\phi_0; s+\frac{m}{2}-1\right)\\
 &=|D|^{-\frac{1}{2}}\cdot e^{\frac{\pi i}{4}(2p-m)} \cdot
\sum_{v\in SO(Y)_{\Z}\backslash V_{\pm}\cap V_{\Z}}
\frac{\mu(v)}{|P(v)|^{s+\frac{m}{2}-1}} \\
 &=|D|^{-\frac{1}{2}}\cdot e^{\frac{\pi i}{4}(2p-m)} \cdot
\sum_{n=1}^{\infty}
\left\{
\sum
\begin{Sb}
v\in SO(Y)_{\Z}\backslash V_{\pm}\cap V_{\Z} \\
P(v)=\pm n
\end{Sb} \mu(v) \right\} 
n^{-s-\frac{m}{2}-1}\\
&=\sum_{n=1}^{\infty} \frac{\mathbf{a}(\pm n)}{n^s},
\end{align*}
where $\mathbf{a}(\pm n) \; (n=1,2,3,\dots)$ is defined by
\begin{align}
\label{form:DefOfFrakA}
\mathbf{a}(\pm n) &=
|D|^{-\frac{1}{2}}\cdot e^{\frac{\pi i}{4}(2p-m)} \cdot
n^{1-\frac{m}{2}}
\sum
\begin{Sb}
v\in SO(Y)_{\Z}\backslash V_{\pm}\cap V_{\Z} \\
P(v)=\pm n
\end{Sb} \mu(v) \\ \nonumber
&=|D|^{-\frac{1}{2}}\cdot e^{\frac{\pi i}{4}(2p-m)} \cdot
n^{1-\frac{m}{2}} \cdot M(P; \pm n),
\end{align}
where $M(P; n)$ is the measure of representation defined as  
\eqref{form:DefOfMPn}.
Further, by plugging $\phi_{\psi, P}(v) = \tau_{\psi}(P(v)) \cdot \phi_0(v)$
in \eqref{form:ModifiedZeta}, we have
\begin{align*}
\widetilde{\zeta}_{\pm}(\phi_{\psi, P};s) 
 &=|D|^{-\frac{1}{2}}\cdot e^{\frac{\pi i}{4}(2p-m)} \cdot
\sum_{v\in SO(Y)_{\Z}\backslash V_{\pm}\cap V_{\Z}}
\frac{\tau_{\psi}(P(v)) \mu(v)}{|P(v)|^{s+\frac{m}{2}-1}} \\
&=\sum_{n=1}^{\infty} \frac{\tau_{\psi}(\pm n)\mathbf{a}(\pm n)}{n^s}.
\end{align*}
On the other hand, let $\phi=\phi_0$ in the normalized 
zeta function $\widetilde{\zeta}_{\eta}^*(\widehat{\phi};s)$
of \eqref{form:ModifiedZetaStar}.
Since $\widehat{\phi_0}= \phi_0$, we have
\begin{align*}
\widetilde{\zeta}_{\pm}^*(\widehat{\phi_0};s) &=N^{-s}\cdot 
\zeta_{\pm}^*\left(\phi_0; s+\frac{m}{2}-1\right) \\
&= N^{-s}\cdot 
\sum_{v^*\in SO(Y)_{\Z}\backslash V_{\pm}^*\cap V_{\Z}}
\frac{\mu^*(v^*)}{|P^*(v^*)|^{s+\frac{m}{2}-1}} \\
&= N^{\frac{m}{2}-1}
\cdot 
\sum_{v^*\in SO(Y)_{\Z}\backslash V_{\pm}^*\cap V_{\Z}}
\frac{\mu^*(v^*)}{|NP^*(v^*)|^{s+\frac{m}{2}-1}}.
\end{align*}
By the definition~\eqref{form:DefOfHatP}, 
we have $\widehat{P}(v^*)= NP^*(v^*) \in \Z\setminus \{0\}$
for $v^*\in V_{\pm}^{*}\cap V_{\Z}$ and 
hence
\begin{align*}
\widetilde{\zeta}_{\pm}^*(\widehat{\phi_0};s) &=
N^{\frac{m}{2}-1}\cdot 
\sum_{n=1}^{\infty} \left\{\sum
\begin{Sb}
v^*\in SO(Y)_{\Z}\backslash V_{\pm}^*\cap V_{\Z} \\
\widehat{P}(v^*)=\pm n
\end{Sb} \mu^*(v^*)\right\} n^{-s-\frac{m}{2}-1} \\
&= \sum_{n=1}^{\infty} \frac{\mathbf{b}(\pm n)}{n^s},
\end{align*}
where $\mathbf{b}(\pm n) \; (n=1,2,3,\dots)$ is defined by
\begin{equation}
\label{form:DefOfFrakB}
\mathbf{b}(\pm n) = 
\left(\frac{n}{N}\right)^{1-\frac{m}{2}}
\sum
\begin{Sb}
v^*\in SO(Y)_{\Z}\backslash V_{\pm}^*\cap V_{\Z} \\
\widehat{P}(v^*)=\pm n
\end{Sb} \mu^*(v^*) = 
\left(\frac{n}{N}\right)^{1-\frac{m}{2}}\cdot 
M^*(\widehat{P}; \pm n), 
\end{equation}
where $M^*(\widehat{P}; n)$ is defined as \eqref{form:MhatPn}.
Finally, let $\phi=\phi_{\psi, P}(v) = \tau_{\psi}(P(v)) \cdot \phi_0(v)$
in \eqref{form:ModifiedZetaStar}.
It then follows from Lemmas~\ref{lemma:ScalarMultInv}~(1) 
and~\ref{lemma:StarkModified}, 
and also 
$\widehat{P}(r^{-1} v^*)= r^{-2} \cdot 
\widehat{P}(v^*)$ that 
\begin{align*}
\widetilde{\zeta}_{\eta}^*(\widehat{\phi_{\psi, P}};s) 
&= N^{\frac{m}{2}-1}
\cdot 
\sum_{v^*\in SO(Y)_{\Z}\backslash V_{\pm}^*\cap V_{\Z}}
\frac{\widehat{\phi_{\psi, P}}(r^{-1} v^*)
\mu^*(r^{-1} v^*)}{|NP^*(r^{-1} v^*)|^{s+\frac{m}{2}-1}} \\
&=N^{\frac{m}{2}-1}
\cdot
r^{-m/2} \chi_{K}(r)\cdot C_{2p-m, r}\cdot
\psi^*(-N)\\
&\qquad \quad \cdot r^{2(s+\frac{m}{2}-1)}
\cdot 
\sum_{v^*\in SO(Y)_{\Z}\backslash V_{\pm}^*\cap V_{\Z}}
\frac{\tau_{\psi^*}(\widehat{P}(v^*))
\mu^*(v^*)}{|\widehat{P}(v^*)|^{s+\frac{m}{2}-1}} \\
&=r^{2s+\frac{m}{2}-2}\chi_{K}(r)\cdot C_{2p-m, r}\cdot
\psi^*(-N) \cdot
\sum_{n=1}^{\infty} \frac{\tau_{\psi^*}(\pm n) \mathbf{b}(\pm n)}{n^s}.
\end{align*}
We thus obtain the first assertion of the following 
\begin{lemma}
\label{lemma:VerifyTheAssumption}
For $n=1,2,3,\dots$, we define 
$\mathbf{a}(\pm n)$ and $\mathbf{b}(\pm n)$
by \eqref{form:DefOfFrakA} and  \eqref{form:DefOfFrakB} 
respectively, and let
\begin{align*}
\zeta_{\pm}(\mathbf{a}; s) &=
\sum_{n=1}^{\infty} \frac{\mathbf{a}(\pm n)}{n^s},  & 
\zeta_{\pm}(\mathbf{a}, \psi; s) &=
\sum_{n=1}^{\infty} \frac{\tau_{\psi}(\pm n)\mathbf{a}(\pm n)}{n^s},  \\
\zeta_{\pm}(\mathbf{b}; s) &=
\sum_{n=1}^{\infty} \frac{\mathbf{b}(\pm n)}{n^s},  & 
\zeta_{\pm}(\mathbf{b}, \psi^*; s) &=
\sum_{n=1}^{\infty} \frac{\tau_{\psi^*}(\pm n)\mathbf{b}(\pm n)}{n^s}.
\end{align*}
\begin{enumerate}
\item We have
\begin{align*}
\widetilde{\zeta}_{\pm}(\phi_0;s) &= \zeta_{\pm}(\mathbf{a};s),  \\
\widetilde{\zeta}_{\pm}(\phi_{\psi, P};s) 
&=\zeta_{\pm}(\mathbf{a}, \psi; s), \\
\widetilde{\zeta}_{\eta}^*(\widehat{\phi_0};s) &=
\zeta_{\pm}(\mathbf{b}; s),  \\
\widetilde{\zeta}_{\eta}^*(\widehat{\phi_{\psi, P}};s) &=
r^{2s+\frac{m}{2}-2}\chi_{K}(r)\cdot C_{2p-m, r}\cdot
\psi^*(-N) \cdot
\zeta_{\pm}(\mathbf{b}, \psi^*; s). 
\end{align*}
\item On residues and special values of zeta functions, 
the following four relations hold:
\begin{align}
\label{form:FirstRelation}
&
{\zeta}_{+}(\mathbf{a}, \psi; 0)+
{\zeta}_{-}(\mathbf{a}, \psi; 0) =
 \tau_{\psi}(0)\cdot \left({\zeta}_{+}(\mathbf{a}; 0)+
{\zeta}_{-}(\mathbf{a}; 0)\right), \\[3pt]
\label{form:SecondRelation}
&r^{\frac{m}{2}} \cdot \chi_{K}(r)\cdot C_{2p-m, r}\cdot
\psi^*(-N) \cdot \Res_{s=1}
\zeta_{\pm}(\mathbf{b}, \psi^*; s)=
\tau_{\psi}(0) \Res_{s=1}\zeta_{\pm}(\mathbf{b}; s), \\[3pt]
\label{form:ThirdRelation}
&{\zeta}_{+}(\mathbf{b}, \psi^*; 0)+
{\zeta}_{-}(\mathbf{b}, \psi^*; 0)=
\tau_{\psi^*}(0) \cdot 
\left({\zeta}_{+}(\mathbf{b} ; 0)+
{\zeta}_{-}(\mathbf{b}\, ; 0)\right), \\[3pt]
\label{form:FourthRelation}
&\Res_{s=1}
\zeta_{\pm}(\mathbf{a}, \psi; s)=
r^{-\frac{m}{2}}\cdot \chi_{K}(r)\cdot C_{2p-m, r}\cdot
\psi^*(-N)\cdot 
\Res_{s=1}\zeta_{\pm}(\mathbf{a}; s).
\end{align}
\item
Assume that at least one of $m$ or $p$ is an odd integer.
Let $\lambda=\frac{m}{4}$ and take an integer $\ell$
with $\ell\equiv 2p-m \pmod{4}$.
Then $\zeta_{\pm}(\mathbf{a}; s)$ and $\zeta_{\pm}(\mathbf{b};s)$
satisfy the assumptions {\bf [A1]}--{\bf [A4]} of 
\S \ref{section:Preliminaries}, and further, 
$\zeta_{\pm}(\mathbf{a}, \psi; s)$ and $\zeta_{\pm}(\mathbf{b}, \psi^*;s)$
satisfy the assumptions {\bf [A1]}${}_{r,\psi}$--{\bf [A5]}${}_{r, \psi}$
of \S \ref{section:Preliminaries}.

\end{enumerate}
\end{lemma}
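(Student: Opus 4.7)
The plan is to leverage four tools already established: Theorem~\ref{theorem:SiegelZetaProperties} for analytic continuation, residues, and special values; Lemma~\ref{lemma:ModifiedZeta} for the untwisted functional equation of the normalized zeta; Lemma~\ref{corollary:ZeroCondition} for vanishing at non-positive integers; and Lemma~\ref{lemma:StarkModified} (Stark's identity) for the explicit Fourier transform of $\phi_{\psi,P}$. Together with the identifications of part~(1) (already derived in the discussion immediately preceding the statement), these will yield parts~(2) and~(3).

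For part~(2), I would exploit two properties of the twist $\phi_{\psi,P}(v)=\tau_{\psi}(P(v))\phi_{0}(v)$. First, on the singular stratum $S_{1,\Q}$ we have $P(v)=0$, so $\phi_{\psi,P}(v)=\tau_{\psi}(0)\phi_{0}(v)$; substituting into \eqref{form:ZetaValueAt0} and using the identification in part~(1) yields \eqref{form:FirstRelation}. Second, combining Lemma~\ref{lemma:StarkModified} with Lemma~\ref{lemma:ScalarMultInv}~(2) (which supplies $\sigma^{*}(r^{-1}w^{*})=r^{m-2}\sigma^{*}(w^{*})$) shows that the sum $\sum_{v^{*}}\widehat{\phi_{\psi,P}}(v^{*})\sigma^{*}(v^{*})$ over $SO(Y)_{\Z}\backslash S_{1,\Q}^{*}$ equals $r^{m/2-2}\chi_{K}(r)C_{2p-m,r}\psi^{*}(-N)\tau_{\psi^{*}}(0)$ times the corresponding sum for $\phi_{0}$; substituting into \eqref{form:ZetaStarValueAt0} yields \eqref{form:ThirdRelation}. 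For the residue relations, \eqref{form:SecondRelation} follows from \eqref{form:ResidueStarZetaS=m/2} together with $\phi_{\psi,P}(0)=\tau_{\psi}(0)$, while \eqref{form:FourthRelation} follows from \eqref{form:ResidueZetaS=m/2} together with the specialization of Lemma~\ref{lemma:StarkModified} at $v^{*}=0$, namely $\widehat{\phi_{\psi,P}}(0)=r^{-m/2}\chi_{K}(r)C_{2p-m,r}\psi^{*}(-N)\tau_{\psi^{*}}(0)$.

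For part~(3), conditions [A1] and [A1]$_{r,\psi}$ reduce to Theorem~\ref{theorem:SiegelZetaProperties}~(1) via part~(1): the shift $s'=s+m/2-1$ moves the poles $\{1,m/2\}$ of $\zeta_{\pm}(\phi;s')$ to $\{2-m/2,1\}=\{2-2\lambda,1\}$ with $\lambda=m/4$. Conditions [A2] and [A2]$_{r,\psi}$ hold because the right-hand sides of \eqref{form:ResidueZetaS=m/2} and \eqref{form:ResidueStarZetaS=m/2} are independent of $\epsilon$ and $\eta$. Condition [A3] is Lemma~\ref{lemma:ModifiedZeta}. For [A3]$_{r,\psi}$, I would apply Lemma~\ref{lemma:ModifiedZeta} to $\phi=\phi_{\psi,P}$ and then use part~(1) to rewrite both sides; the extra prefactor $r^{2-m/2-2s}\chi_{K}(r)C_{2p-m,r}\psi^{*}(-N)$ supplied by the dual identification recombines via $r^{2-m/2-2s}N^{2-m/2-s}=r^{2\lambda-2}(Nr^{2})^{2-2\lambda-s}$ and the identities $\Sigma(\ell)=\Sigma(2p-m)$, $C_{\ell,r}=C_{2p-m,r}$ (both valid since $\ell\equiv 2p-m\pmod 4$) into exactly the form [A3]$_{r,\psi}$ with $\chi=\chi_{K}$. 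Conditions [A4] and [A4]$_{r,\psi}$ follow from Lemma~\ref{corollary:ZeroCondition} applied to $\phi_{0}$ and $\phi_{\psi,P}$: when $m$ is odd, $\lambda=m/4$ is not of the form $q/2$ for integer $q$ and the condition is vacuous; when $m$ is even with $p$ odd, Lemma~\ref{corollary:ZeroCondition}~(2) gives vanishing for $1\leq k\leq q-2$ with $q=m/2$, which covers the required range $1\leq k\leq q-3$. Finally, [A5]$_{r,\psi}$ is precisely the four relations proved in part~(2).

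The main obstacle is the careful bookkeeping in [A3]$_{r,\psi}$: reconciling the $r$-exponents, the Kronecker character $\chi_{K}$ (which will serve as the character $\chi$ of the converse theorem), and the constant $C_{2p-m,r}$ produced by Stark's identity with the precise form prescribed in Section~\ref{section:Preliminaries}. Everything rests on Lemma~\ref{lemma:StarkModified}, whose proof uses Gauss sum evaluations and quadratic reciprocity; once its output is in hand, the remainder is largely exponent, character, and trigonometric bookkeeping.
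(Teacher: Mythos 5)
Your plan follows the paper's proof essentially verbatim: part (2) is obtained exactly as in the paper by specializing \eqref{form:ZetaValueAt0}, \eqref{form:ZetaStarValueAt0} and the residue formulas of Theorem~\ref{theorem:SiegelZetaProperties}~(3) to $\phi_0$ and $\phi_{\psi,P}$ (using $P(v)=0$ on $S_{1,\Q}$, Lemma~\ref{lemma:StarkModified}, and Lemma~\ref{lemma:ScalarMultInv}), and part (3) is the same assembly of Theorem~\ref{theorem:SiegelZetaProperties}, Lemma~\ref{lemma:ModifiedZeta}, Lemma~\ref{corollary:ZeroCondition}, and part (2). Your extra care with the range check for [A4] and with the factor $\tau_{\psi^*}(0)$ in the specialization of Lemma~\ref{lemma:StarkModified} at $v^*=0$ (which the paper's display for \eqref{form:FourthRelation} silently drops, though it is what the fourth bullet of $\mathrm{[A5]}_{r,\psi}$ requires) only improves on the paper's write-up.
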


\begin{proof}
(2) By letting $\phi=\phi_0$ in \eqref{form:ZetaValueAt0}, 
we have
\begin{equation}
 \label{form:ZetaFrakA0}
-\left({\zeta}_{+}(\mathbf{a}; 0)+
{\zeta}_{-}(\mathbf{a}; 0)\right) =
|D|^{-\frac{1}{2}}\cdot e^{\frac{\pi i}{4}(2p-m)} \cdot 
\sum_{v\in SO(Y)_{\Z}\backslash S_{1, \Z}}
   \sigma(v),
\end{equation}
and by letting $\phi=\phi_{\psi, P}$
in \eqref{form:ZetaValueAt0}, we have
\begin{align*}   
-\left({\zeta}_{+}(\mathbf{a}, \psi; 0)+
{\zeta}_{-}(\mathbf{a}, \psi; 0)\right) &=
|D|^{-\frac{1}{2}}\cdot e^{\frac{\pi i}{4}(2p-m)} \cdot 
\tau_{\psi}(0)
\sum_{v\in SO(Y)_{\Z}\backslash S_{1, \Z}}
  \sigma(v) \\
  &=
  -\tau_{\psi}(0)\cdot \left({\zeta}_{+}(\mathbf{a}; 0)+
{\zeta}_{-}(\mathbf{a}; 0)\right),
\end{align*}
which proves \eqref{form:FirstRelation}.
By~\eqref{form:ModifiedZetaStar} and
Theorem~\ref{theorem:SiegelZetaProperties}~(3), we have
\begin{align*}
\Res_{s=1}\zeta_{\pm}(\mathbf{b}; s) &= 
\Res_{s=1}
\widetilde{\zeta}_{\pm}^*(\widehat{\phi_0};s) \\
&= \Res_{s=1}\left(
N^{-s}\cdot 
\zeta_{\pm}^*\left(\widehat{\phi_0}; s+\frac{m}{2}-1\right)\right)
\\
&= N^{-1}\cdot \Res_{s=\frac{m}{2}} \zeta_{\pm}^{*}
(\widehat{\phi_0}; s) \\
&= N^{-1}
  \int_{SO(Y)_{\R}/SO(Y)_{\Z}} d^{1}g, 
\end{align*}
and we thus obtain
\begin{equation}
\label{form:ResZetaFrakBat1}
\Res_{s=1}\zeta_{\pm}(\mathbf{b}; s) = 
N^{-1}
  \int_{SO(Y)_{\R}/SO(Y)_{\Z}} d^{1}g.
\end{equation}
Let us consider the residues at $s=1$ of the both sides of
\[
\widetilde{\zeta}_{\pm}^*(\widehat{\phi_{\psi, P}};s) =
r^{2s+\frac{m}{2}-2}\chi_{K}(r)\cdot C_{2p-m, r}\cdot
\psi^*(-N) \cdot
\zeta_{\pm}(\mathbf{b}, \psi^*; s).
\]
The residue at $s=1$ of the left hand side is
\[
\Res_{s=1}
\widetilde{\zeta}_{\pm}^*(\widehat{\phi_{\psi, P}};s) 
=N^{-1} \cdot \phi_{\psi, P}(0)  \int_{SO(Y)_{\R}/SO(Y)_{\Z}} d^{1}g 
= \tau_{\psi}(0) \Res_{s=1}\zeta_{\pm}(\mathbf{b}; s),
\]
and that of the right hand side is
\begin{align*}
&\qquad \Res_{s=1}\left\{r^{2s+\frac{m}{2}-2}\chi_{K}(r)\cdot C_{2p-m, r}\cdot
\psi^*(-N) \cdot
\zeta_{\pm}(\mathbf{b}, \psi^*; s)\right\} \\
&= r^{\frac{m}{2}} \cdot \chi_{K}(r)\cdot C_{2p-m, r}\cdot
\psi^*(-N) \cdot \Res_{s=1}
\zeta_{\pm}(\mathbf{b}, \psi^*; s),
\end{align*}
by which we obtain~\eqref{form:SecondRelation}.
Next let $\phi=\phi_0$ in  the relation~\eqref{form:ZetaStarValueAt0}.
Then we have
\begin{equation}
 \label{form:ZetaFrakB0}
-\left({\zeta}_{+}(\mathbf{b} ; 0)+
{\zeta}_{-}(\mathbf{b}\, ; 0)\right) =
 \sum_{v^*\in SO(Y)_{\Z}\backslash S_{1, \Z}^{*}}
  \sigma^*(v^*).
\end{equation}
By letting $\phi=\phi_{\psi, P}$ in~\eqref{form:ZetaStarValueAt0}
and using Lemmas~\ref{lemma:StarkModified}
and~\ref{lemma:ScalarMultInv}~(2), we have
\begin{align*}
&\qquad
-r^{\frac{m}{2}-2}\chi_{K}(r)\cdot C_{2p-m, r}\cdot
\psi^*(-N) \cdot
\left({\zeta}_{+}(\mathbf{b}, \psi^*; 0)+
{\zeta}_{-}(\mathbf{b}, \psi^*; 0)\right) \\
&=  \sum_{v^*\in SO(Y)_{\Z}\backslash S_{1, \Z}^{*}}
  \widehat{\phi_{\psi, P}}(r^{-1} v^*)
  \underbrace{\sigma^*(r^{-1} v^*)}_{=r^{m-2} \cdot \sigma^*(v^*)} \\
&=r^{-\frac{m}{2}} \chi_{K}(r)\cdot C_{2p-m, r}\cdot
\psi^*(-N)\cdot \tau_{\psi^*}(0) \cdot r^{m-2}
\sum_{v^*\in SO(Y)_{\Z}\backslash S_{1, \Z}^{*}}
\sigma^*(v^*) \\
&=- r^{\frac{m}{2}-2} \chi_{K}(r)\cdot C_{2p-m, r}\cdot
\psi^*(-N)\cdot \tau_{\psi^*}(0) 
\left({\zeta}_{+}(\mathbf{b} ; 0)+
{\zeta}_{-}(\mathbf{b}\, ; 0)\right),
\end{align*}
and this proves 
\[
{\zeta}_{+}(\mathbf{b}, \psi^*; 0)+
{\zeta}_{-}(\mathbf{b}, \psi^*; 0)=
\tau_{\psi^*}(0) \cdot 
\left({\zeta}_{+}(\mathbf{b} ; 0)+
{\zeta}_{-}(\mathbf{b}\, ; 0)\right),
\]
which is the relation~\eqref{form:ThirdRelation}.
By \eqref{form:ModifiedZeta} and 
Theorem~\ref{theorem:SiegelZetaProperties}~(3), we have
\begin{align*}
\Res_{s=1}\zeta_{\pm}(\mathbf{a}; s) &= 
\Res_{s=1}
\widetilde{\zeta}_{\pm}({\phi_0};s) \\
&= \Res_{s=1}\left(
|D|^{-\frac{1}{2}}\cdot e^{\frac{\pi i}{4}(2p-m)} \cdot
\zeta_{\pm}\left({\phi_0}; s+\frac{m}{2}-1\right)\right)
\\
&= 
|D|^{-\frac{1}{2}}\cdot e^{\frac{\pi i}{4}(2p-m)} 
\cdot \Res_{s=\frac{m}{2}} \zeta_{\pm}
\left({\phi_0}; s\right) \\
&= |D|^{-\frac{1}{2}}\cdot e^{\frac{\pi i}{4}(2p-m)}
  \int_{SO(Y)_{\R}/SO(Y)_{\Z}} d^{1}g,
\end{align*}
and we thus obtain
\begin{equation}
\label{form:ResZetaFrakAat1}
\Res_{s=1}\zeta_{\pm}(\mathbf{a}; s) = 
|D|^{-\frac{1}{2}}\cdot e^{\frac{\pi i}{4}(2p-m)}
  \int_{SO(Y)_{\R}/SO(Y)_{\Z}} d^{1}g.
\end{equation}
Furthermore, it follows from  Lemma~\ref{lemma:StarkModified} 
that the residue of
$\zeta_{\pm}(\mathbf{a}, \psi; s)=
\widetilde{\zeta}_{\pm}(\phi_{\psi, P};s)$
at $s=1$ is given by 
\begin{align*}
\Res_{s=1}\zeta_{\pm}(\mathbf{a}, \psi; s)
&=|D|^{-\frac{1}{2}}\cdot e^{\frac{\pi i}{4}(2p-m)} \cdot
\widehat{\phi_{\psi, P}}(0)
  \int_{SO(Y)_{\R}/SO(Y)_{\Z}} d^{1}g \\
  &=
  |D|^{-\frac{1}{2}}\cdot e^{\frac{\pi i}{4}(2p-m)} \cdot
  r^{-\frac{m}{2}} \chi_{K}(r)\cdot C_{2p-m, r}\cdot
\psi^*(-N)
  \int_{SO(Y)_{\R}/SO(Y)_{\Z}} d^{1}g \\
  &= r^{-\frac{m}{2}} \chi_{K}(r)\cdot C_{2p-m, r}\cdot
\psi^*(-N)\cdot 
\Res_{s=1}\zeta_{\pm}(\mathbf{a}; s), 
\end{align*}
by which we obtain the relation~\eqref{form:FourthRelation}.

\medskip

\noindent
(3) By Theorem~\ref{theorem:SiegelZetaProperties}~(1), (3), 
we see that our zeta functions satisfy the assumptions 
{\bf [A1]}, {\bf [A1]}${}_{r, \psi}$, 
{\bf [A2]}, and {\bf [A2]}${}_{r, \psi}$.
The functional equation of {\bf [A3]} is nothing but 
the equation~\eqref{form:ModifiedFE} with $\phi=\phi_0$.
Let $\phi=\phi_{\psi, P}$ in~\eqref{form:ModifiedFE};
then the first assertion of the lemma implies that  
\begin{multline*}
(2\pi)^{-s}\Gamma(s)
\gamma(s)
\begin{pmatrix}
{\zeta}_{+}(\mathbf{a}, \psi ;s) \\[5pt]
{\zeta}_{-}(\mathbf{a}, \psi;s) 
\end{pmatrix} \\
=
\chi_{K}(r)\cdot C_{2p-m, r}\cdot
\psi^*(-N) \cdot r^{\frac{m}{2}-2} 
\cdot (Nr^2)^{2-\frac{m}{2}-s} \\
\cdot 
(2\pi)^{-(2-\frac{m}{2}-s)}\Gamma\left(2-\frac{m}{2}-s\right) \\
\cdot
\Sigma(2p-m)
\gamma\left(2-\frac{m}{2}-s\right)
\begin{pmatrix}
{\zeta}_{+}\left(\mathbf{b}, \psi^*;2-\tfrac{m}{2}-s\right) \\[5pt]
{\zeta}_{-}\left(\mathbf{b}, \psi^*;2-\tfrac{m}{2}-s\right) 
\end{pmatrix},
\end{multline*}
which shows that  the functional equation of {\bf [A3]}${}_{r, \psi}$ holds.
Lemma~\ref{corollary:ZeroCondition} implies that our zeta functions
satisfy the assumptions~{\bf [A4]} and
{\bf [A4]}${}_{r, \psi}$.
Finally, the compatibility condition~{\bf [A5]}${}_{r, \psi}$ on residues 
and special values follows from \eqref{form:FirstRelation}, 
\eqref{form:SecondRelation}, \eqref{form:ThirdRelation} and 
\eqref{form:FourthRelation}.
\end{proof}
In general,  $SO(Y)_{\Z}\backslash S_{1, \Z}$ 
is always an infinite set, 
since for $v\in S_{1, \Z}$, any two of $v, 2v, 3v,\dots$ can not lie in the same $SO(Y)_{\Z}$-orbit. However, as is seen in
the following lemma that is taken from~\cite[pp.188--189]{PVBook},
the number of $SO(Y)_{\Z}$-orbits in {\it primitive} vectors in 
$S_{1, \Z}$ and $S_{1, \Z}^*$ 
is finite.
\begin{lemma}
\label{lemma:FromPVBook}
\begin{enumerate}
\item We call a vector $v=(v_1, \dots, v_{m})\in V_{\Z}$ primitive if the greatest common
divisor of $v_1,\dots, v_m$ is $1$. Then 
\[
\{v\in SO(Y)_{\Z}\backslash S_{1,\Z}\, ;\, v \, \textrm{is primitive}\}
\]
is a finite set. Let $a_1, \dots, a_h$ be a complete system of 
representatives of this set. Then we have 
\[
\sum_{v\in SO(Y)_{\Z}\backslash S_{1, \Z}}
   \sigma(v) = \zeta(m-2) \sum_{i=1}^{h} \sigma(a_i).
\]
\item Let $b_1, \dots, b_k$ be a complete system of
the finite set 
\[
\{v^*\in SO(Y)_{\Z}\backslash S_{1,\Z}^*\, ;\, v^* \, \textrm{is primitive}\}.
\]
Then we have
\[
\sum_{v\in SO(Y)_{\Z}\backslash S_{1, \Z}^*}
   \sigma^*(v^*) = \zeta(m-2) \sum_{i=1}^{k} \sigma^*(b_i).
\]
\end{enumerate}
\end{lemma}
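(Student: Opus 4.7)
My plan is to reduce the sum over all of $SO(Y)_{\Z}\backslash S_{1,\Z}$ to a sum over primitive representatives, multiplied by a copy of the Riemann zeta value that arises from the scaling factor.

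First I would observe that every nonzero $v\in V_{\Z}$ can be written uniquely as $v=r v_0$ with $r\in\Z_{>0}$ and $v_0$ primitive, and that this decomposition is preserved by the action of $SO(Y)_{\Z}\subset SL_m(\Z)$: since an element of $SO(Y)_{\Z}$ acts through an integer matrix of determinant $1$, it carries primitive vectors to primitive vectors and commutes with scalar multiplication by $r$. Moreover $P(rv_0)=r^2 P(v_0)$, so $v\in S_{1,\Z}$ if and only if $v_0\in S_{1,\Z}$. Consequently we get the disjoint decomposition
\[
SO(Y)_{\Z}\backslash S_{1,\Z} \;=\; \coprod_{r=1}^{\infty} r\cdot \bigl(SO(Y)_{\Z}\backslash S_{1,\Z}^{\mathrm{prim}}\bigr),
\]
where $S_{1,\Z}^{\mathrm{prim}}$ denotes the set of primitive elements of $S_{1,\Z}$.

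Next I would invoke the finiteness of $SO(Y)_{\Z}\backslash S_{1,\Z}^{\mathrm{prim}}$. This is a reduction-theory fact for arithmetic orthogonal groups acting on the isotropic cone — essentially the content of Borel--Harish-Chandra finiteness applied to the $SO(Y)$-orbit of a primitive isotropic vector, all of whose stabilizers are parabolic (or reductive in degenerate cases). The statement is precisely what is recorded in Kimura~\cite[pp.~188--189]{PVBook}, so I would quote it from there. Call $a_1,\dots,a_h$ a complete system of representatives.

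With finiteness in hand, the rest is a direct computation. By Lemma~\ref{lemma:ScalarMultInv}~(2), for any primitive $a_i$ and any $r\in\Z_{>0}$ we have $\sigma(r a_i)=r^{2-m}\sigma(a_i)$. Summing over the decomposition above gives
\[
\sum_{v\in SO(Y)_{\Z}\backslash S_{1,\Z}} \sigma(v)
\;=\; \sum_{i=1}^{h} \sum_{r=1}^{\infty} \sigma(r a_i)
\;=\; \sum_{i=1}^{h}\sigma(a_i)\cdot \sum_{r=1}^{\infty} r^{2-m}
\;=\; \zeta(m-2)\sum_{i=1}^{h}\sigma(a_i),
\]
where convergence of $\zeta(m-2)$ is ensured by the standing assumption $m\geq 5$. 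The assertion for $S_{1,\Z}^{*}$ follows by the identical argument, using the dual scaling relation $\sigma^{*}(r v^{*})=r^{2-m}\sigma^{*}(v^{*})$ from the same lemma, together with the corresponding finiteness statement for primitive elements of $SO(Y)_{\Z}\backslash S_{1,\Z}^{*}$.

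The main (and only nontrivial) obstacle is the finiteness of the primitive orbit set $SO(Y)_{\Z}\backslash S_{1,\Z}^{\mathrm{prim}}$; this is a genuine arithmetic input and not a formal manipulation, but since it is explicitly asserted in Kimura's book, the cleanest course is to cite it rather than reprove it. Everything else is bookkeeping plus the homogeneity of the measures $\sigma$ and $\sigma^{*}$ already established.
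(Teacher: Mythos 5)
Your proposal is correct, and it is exactly the intended argument: the paper itself gives no proof of this lemma but simply quotes it from Kimura~\cite[pp.~188--189]{PVBook}, where the content is precisely the unique decomposition $v=rv_0$ with $v_0$ primitive, the finiteness of the primitive orbit set, and the homogeneity $\sigma(rv)=r^{2-m}\sigma(v)$ (the paper's Lemma~\ref{lemma:ScalarMultInv}~(2)) producing $\sum_{r\ge1}r^{2-m}=\zeta(m-2)$, convergent since $m\ge5$. Your treatment of the finiteness as a cited arithmetic input rather than something to reprove matches the paper's own stance; the only cosmetic slip is the parenthetical claim that stabilizers of isotropic vectors are ``parabolic,'' which is inaccurate (they are contained in parabolics but are extensions of smaller orthogonal groups by unipotent radicals), though this plays no role in the argument.
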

Now we are in a position to state
\begin{theorem}
\label{theorem:SiegelMaass}
Assume that at least one of $m$ or $p$ is an odd integer.
Take an integer $\ell$
with $\ell\equiv 2p-m \pmod{4}$.
Define $C^{\infty}$-functions 
$F(z)$ and $G(z)$ on $\mathcal{H}$ by
\begin{align*}
F(z) &=   y^{(m-\ell)/4}\cdot \int_{SO(Y)_{\R}/SO(Y)_{\Z}} d^{1}g  \\
&+
 (-1)^{(2p-m-\ell)/4}
\zeta(m-2) \sum_{i=1}^{h} \frac{\sigma(a_{i})}{|D|^{\frac{1}{2}}} 
\cdot
\frac{(2\pi) 2^{1-\frac{m}{2}} \Gamma(\frac{m}{2}-1)}
{\Gamma\left(\frac{m+\ell}{4}\right)
\Gamma\left(\frac{m-\ell}{4}\right)}\cdot y^{1-(m+\ell)/4}\\
&+\sum
\begin{Sb}
n=-\infty \\
n\neq 0
\end{Sb}^{\infty}
 (-1)^{(2p-m-\ell)/4}\cdot  \frac{M(P;n)}{|D|^{\frac{1}{2}}}
\frac{\pi^{\frac{m}{4}} \cdot  |n|^{-\frac{m}{4}}}
{\Gamma\left(\frac{m+\sgn(n)\ell}{4}\right)}\cdot
y^{-\frac{\ell}{4}}W_{\frac{\sgn(n)\ell}{4}, \frac{m}{4}-\frac{1}{2}}(4\pi |n|y) \bfe[nx], \\
G(z) 
 &=
 N^{\frac{m}{4}} \cdot |D|^{-\frac{1}{2}} e^{\frac{\pi i}{4}(2p-m)} 
\cdot y^{(m-\ell)/4} \cdot  \int_{SO(Y)_{\R}/SO(Y)_{\Z}} d^{1}g
\\
&+
i^{-\frac{\ell}{2}}
N^{1-\frac{m}{4}}\zeta(m-2) \sum_{i=1}^{k} \sigma^*(b_{i}) 
\frac{(2\pi) 2^{1-\frac{m}{2}} \Gamma(\frac{m}{2}-1)}
{\Gamma\left(\frac{m+\ell}{4}\right)
\Gamma\left(\frac{m-\ell}{4}\right)}\cdot y^{1-(m+\ell)/4}\\
&+i^{-\frac{\ell}{2}} \sum
\begin{Sb}
n=-\infty \\
n\neq 0
\end{Sb}^{\infty}
\left(\frac{|n|}{N}\right)^{-\frac{m}{4}}
M^*(\widehat{P};n)
\frac{\pi^{\frac{m}{4}}}
{\Gamma\left(\frac{m+\sgn(n)\ell}{4}\right)}\cdot
y^{-\frac{\ell}{4}}W_{\frac{\sgn(n)\ell}{4}, \frac{m}{4}-\frac{1}{2}}(4\pi |n|y) \bfe[nx].
\end{align*}
Then, $F(z)$ (resp. $G(z)$) is a Maass form for $\Gamma_0(N)$ of weight $\ell/2$
with eigenvalue $(m-\ell)(4-m-\ell)/16$ and
character $\chi_{K}$ (resp.\ $\chi_{K_N}$). Here we denote by $\chi_{K}$ and
$\chi_{K_{N}}$ the Kronecker characters associated to the fields 
\[
 K=
 \begin{cases}
 \Q(\sqrt{(-1)^{m/2} D})    & (m\equiv 0 \pmod{2}) \\
 \Q(\sqrt{2|D|})    & (m\equiv 1 \pmod{2}) 
 \end{cases},
\]
and
\[
 K_{N}=
 \begin{cases}
 \Q(\sqrt{(-1)^{m/2} D})    & (m\equiv 0 \pmod{2}) \\
 \Q(\sqrt{2|D|N})    & (m\equiv 1 \pmod{2}) 
 \end{cases},
\]
respectively. Further we have
\[
F\left(-\frac{1}{Nz}\right) 
(\sqrt{N} z)^{-\ell/2} = G(z).
\]
\end{theorem}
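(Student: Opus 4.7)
The plan is to apply the converse theorem (Lemma~\ref{corollary:Maassforms}) with $\lambda=m/4$, level $N$, character $\chi=\chi_{K}$ (the Kronecker character appearing in Lemma~\ref{lemma:StarkModified}), and Dirichlet series
\[
\xi_{\pm}(\alpha;s)=\zeta_{\pm}(\mathbf{a};s),\qquad \xi_{\pm}(\beta;s)=\zeta_{\pm}(\mathbf{b};s),
\]
where $\mathbf{a}(\pm n)$ and $\mathbf{b}(\pm n)$ are the scaled measures of representation of \eqref{form:DefOfFrakA} and \eqref{form:DefOfFrakB}; the identifications $\widetilde{\zeta}_{\pm}(\phi_{0};s)=\zeta_{\pm}(\mathbf{a};s)$ and $\widetilde{\zeta}_{\pm}^{*}(\widehat{\phi_{0}};s)=\zeta_{\pm}(\mathbf{b};s)$ of Lemma~\ref{lemma:VerifyTheAssumption}(1) bridge the prehomogeneous framework with the converse theorem. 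All hypotheses {\bf [A1]}--{\bf [A4]} and the twisted versions {\bf [A1]}${}_{r,\psi}$--{\bf [A5]}${}_{r,\psi}$ are already verified in Lemma~\ref{lemma:VerifyTheAssumption}(3); in particular, the vanishing condition {\bf [A4]} is where the parity hypothesis (that $m$ or $p$ is odd) enters, through Lemma~\ref{corollary:ZeroCondition}.

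Next I compute the boundary data $\alpha(0),\alpha(\infty),\beta(0),\beta(\infty)$ from \eqref{form:DefOfA0AndAInfty}--\eqref{form:DefOfB0AndBInfty2}. The residue formulas \eqref{form:ResZetaFrakAat1} and \eqref{form:ResZetaFrakBat1} supply the two $\infty$-constants, while \eqref{form:ZetaFrakA0} and \eqref{form:ZetaFrakB0} combined with Lemma~\ref{lemma:FromPVBook} convert the infinite singular sums into the finite primitive sums $\sum_{i}\sigma(a_{i})$ and $\sum_{j}\sigma^{*}(b_{j})$, each weighted by $\zeta(m-2)$. Substituting everything into \eqref{form:Results} and \eqref{form:Results2} with $\lambda=m/4$, the exponents of $y$, the Whittaker indices, the powers of $\pi$ and $|n|$, and the Gamma factors match the theorem statement on sight. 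The one nontrivial simplification is the combined phase,
\[
i^{-\ell/2}\cdot e^{\pi i(2p-m)/4}=e^{\pi i(2p-m-\ell)/4}=(-1)^{(2p-m-\ell)/4},
\]
which uses $2p-m\equiv\ell\pmod{4}$ to make the exponent an integer. A parallel computation produces $G(z)$, and the identity $F(-1/(Nz))(\sqrt{N}z)^{-\ell/2}=G(z)$ then follows directly from the corresponding relation in Lemma~\ref{corollary:Maassforms}.

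I expect the main obstacle to be the bookkeeping of characters. Lemma~\ref{corollary:Maassforms} requires $\chi(-1)=i^{\ell}$ when $\ell$ is even and $\chi(-1)=1$ when $\ell$ is odd, and it outputs a Maass form of character $\chi_{N,\ell}(d)=\overline{\chi(d)}(N/d)^{\ell}$ for $G_{\beta}$. I must verify, using $\sgn D=(-1)^{m-p}$ and $\ell\equiv 2p-m\pmod{4}$, that $\chi=\chi_{K}$ indeed satisfies the required parity; this splits into the cases $m$ odd (where the discriminant $2|D|$ of $K$ is positive, forcing $\chi_{K}(-1)=1$) and $m$ even (where the sign of $(-1)^{m/2}D$ is tracked against $i^{\ell}$). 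I must also check that $\chi_{K,N,\ell}$ coincides with $\chi_{K_{N}}$, which reduces to multiplicativity of the Kronecker symbol: for $m$ odd the symbol $(N/d)$ merges $2|D|$ with $N$ to give the field $K_{N}=\Q(\sqrt{2|D|N})$, while for $m$ even the factor $(N/d)^{\ell}$ is a square and $K_{N}=K$, exactly as in the statement.
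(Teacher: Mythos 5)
Your proposal follows exactly the route of the paper: apply the converse theorem (Lemma~\ref{corollary:Maassforms}) with $\lambda=m/4$ to $\zeta_{\pm}(\mathbf{a};s)$ and $\zeta_{\pm}(\mathbf{b};s)$, invoke Lemma~\ref{lemma:VerifyTheAssumption}(3) for the hypotheses, and compute $\mathbf{a}(0),\mathbf{a}(\infty),\mathbf{b}(0),\mathbf{b}(\infty)$ via \eqref{form:ZetaFrakA0}, \eqref{form:ResZetaFrakBat1}, \eqref{form:ZetaFrakB0}, \eqref{form:ResZetaFrakAat1} and Lemma~\ref{lemma:FromPVBook}. Your additional explicit checks (the phase identity $i^{-\ell/2}e^{\pi i(2p-m)/4}=(-1)^{(2p-m-\ell)/4}$ and the character parity/identification of $\chi_{N,\ell}$ with $\chi_{K_N}$) are correct and merely make explicit what the paper leaves implicit.
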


\begin{proof}
We apply the converse theorem
(Lemma~\ref{corollary:Maassforms}) to the normalized
zeta functions $\zeta_{\pm}(\mathbf{a};s)$
and $\zeta_{\pm}(\mathbf{b};s)$ of
Lemma~\ref{lemma:VerifyTheAssumption}. 
It remains to calculate the constant terms
$\mathbf{a}(0),\, \mathbf{a}(\infty), \,
\mathbf{b}(0), \, \mathbf{b}(\infty)$
along with the definitions 
\eqref{form:DefOfA0AndAInfty},
\eqref{form:DefOfA0AndAInfty2},
\eqref{form:DefOfB0AndBInfty},
\eqref{form:DefOfB0AndBInfty2}.
First, 
by \eqref{form:ZetaFrakA0} and Lemma~\ref{lemma:FromPVBook}~(1), we have
\begin{align*}
\mathbf{a}(0)&=
-\left({\zeta}_{+}(\mathbf{a}; 0)+
{\zeta}_{-}(\mathbf{a}; 0)\right) \\
&=
|D|^{-\frac{1}{2}}\cdot e^{\frac{\pi i}{4}(2p-m)} \cdot 
\zeta(m-2)
\sum_{i=1}^{h} \sigma(a_i).
\end{align*}
Second, by \eqref{form:ResZetaFrakBat1}, we have
\begin{align*}
 \mathbf{a}(\infty) &= \frac{N}{2} 
 \left(\Res_{s=1} \zeta_{+}(\mathbf{b};s)+
\Res_{s=1} \zeta_{-}(\mathbf{b};s)\right)\\
&=\int_{SO(Y)_{\R}/SO(Y)_{\Z}} d^{1}g.
\end{align*}
Third, 
by \eqref{form:ZetaFrakB0} and Lemma~\ref{lemma:FromPVBook}~(2), we have
\begin{align*}
\mathbf{b}(0)&=
-\left({\zeta}_{+}(\mathbf{b}; 0)+
{\zeta}_{-}(\mathbf{b}; 0)\right) \\
&= \zeta(m-2) \sum_{i=1}^{k} \sigma^*(b_i).
\end{align*}
Finally, by
\eqref{form:ResZetaFrakAat1}, we have
\begin{align*}
 \mathbf{b}(\infty) &= \frac{i^{-\ell}}{2} 
 \left(\Res_{s=1} \zeta_{+}(\mathbf{a};s)+
\Res_{s=1} \zeta_{-}(\mathbf{a};s)\right)\\
&=i^{-\ell}\cdot |D|^{-\frac{1}{2}} e^{\frac{\pi i}{4}(2p-m)}
\int_{SO(Y)_{\R}/SO(Y)_{\Z}} d^{1}g \\
&= |D|^{-\frac{1}{2}} e^{-\frac{\pi i}{4}(2p-m)}
\int_{SO(Y)_{\R}/SO(Y)_{\Z}} d^{1}g.
\end{align*}
\end{proof}

\begin{remark}
\label{remark:RelationToSiegel}
One can verify that our $M(P;n)$ is identical to $M(\mathfrak{S}, \mathfrak{a}, t)$ 
($\mathfrak{a}=0$), which is defined as the formula (14) of Siegel~\cite{Siegel}. 
Moreover, up to a power of $y$, our
$F(z)$ coincides
with the integral $\int_{F} f_{\mathfrak{a}}(z, \mathfrak{P})dv \; (\mathfrak{a}=0)$ 
of the indefinite theta series $f_{\mathfrak{a}}(z, \mathfrak{P})$ over some 
fundamental domain $F$. See Siegel~\cite[Hilfssatz 4]{Siegel} for the detail.
We also note that Funke~\cite{Funke} calculated the Mellin
transform of some indefinite theta series and obtained Siegel's zeta functions 
associated with ternary zero forms.
\end{remark}

\section{Holomorphic modular forms arising from Siegel's zeta functions}
\label{section:Holomorphic}

Under some conditions, the $\gamma$-matrix in Siegel's functional equation~\eqref{form:FEofSiegelZeta}
can be an upper or lower triangular matrix. 
In such a case, we obtain a single functional equation. More precisely, 

\medskip

\noindent
$\bullet$ Assume that the number of negative eigenvalues of $Y$ is even;
that is, $m-p$ is an even integer.
Then the first row of~\eqref{form:FEofSiegelZeta}
is of the following form:
\[
\zeta_{+}\left(\phi; \frac{m}{2}-s\right)=
\Gamma\left(s+1-\frac{m}{2}\right)\Gamma(s) |D|^{\frac{1}{2}}
\cdot 2^{-2s+\frac{m}{2}}\cdot \pi^{-2s+\frac{m}{2}-1} 
\sin \pi\left(\frac{p}{2}-s\right)\zeta_{+}^*(\widehat{\phi};s).  
\]
This suggests that $\zeta_{+}(\phi; s)$ and $\zeta_{+}^*(\phi;s)$ satisfy the functional equation of
Hecke type.

\bigskip

\noindent
$\bullet$ Assume that the number of positive eigenvalues of $Y$ is even;
that is, $p$ is an even integer.
Then the second row of~\eqref{form:FEofSiegelZeta}
is of the following form:
\[
\zeta_{-}\left(\phi; \frac{m}{2}-s\right)=
\Gamma\left(s+1-\frac{m}{2}\right)\Gamma(s) |D|^{\frac{1}{2}}
\cdot 2^{-2s+\frac{m}{2}}\cdot \pi^{-2s+\frac{m}{2}-1} 
\sin \pi\left(\frac{m-p}{2}-s\right)
\zeta_{-}^*(\widehat{\phi};s).
\]
This suggests that $\zeta_{-}(\phi; s)$ and $\zeta_{-}^*(\phi;s)$ satisfy the functional equation of
Hecke type.

\bigskip

In the following, we assume that $m-p$ is even; if $p$ is even, we replace
$P$ with $-P$. 
We introduce Dirichlet series $L(M;s)$ and $L(M^*; s)$ 
as follows: 
\begin{align}
\label{form:HolDSM}
L(M;s) &=\sum_{n=1}^{\infty} \frac{a(n)}{n^s} 
& \text{with}\quad a(n)&:=  |D|^{-1/2}\cdot M(P;n), \\
\label{form:HolDSMStar}
L(M^*;s) &=
\sum_{n=1}^{\infty} \frac{b(n)}{n^s}
& \text{with}\quad b(n) &:=(-1)^{\frac{m-2p}{4}}\cdot N^{\frac{m}{4}}\cdot M^*(\widehat{P};n).
\end{align}
Further, we put 
\begin{align*}
\Lambda_{N}(s; M) &=
\left(\frac{2\pi}{\sqrt{N}}\right)^{-s}\cdot \Gamma(s)\cdot L(M; s), \\
\Lambda_{N}(s; M^*) &=
\left(\frac{2\pi}{\sqrt{N}}\right)^{-s}\cdot \Gamma(s)\cdot L(M^*;s), 
\end{align*}
and
\begin{align}
a(0) &=(-1)^{\frac{m-p}{2}} (2\pi)^{-\frac{m}{2}} \cdot \Gamma\left(\frac{m}{2}\right) 
\int_{SO(Y)_{\R}/SO(Y)_{\Z}} d^{1}g, \\
b(0) &=i^{-\frac{m}{2}}\cdot (2\pi)^{-\frac{m}{2}} \cdot \Gamma\left(\frac{m}{2}\right) N^{\frac{m}{4}} |D|^{-1/2}
\int_{SO(Y)_{\R}/SO(Y)_{\Z}} d^{1}g.
\end{align}
Then Theorem~\ref{theorem:SiegelZetaProperties} implies that 
 the following lemma holds: 
%
\begin{lemma}
Assume that $m-p$ is even. 
Both $\Lambda_{N}(s; M)$ and $\Lambda_{N}(s; M^*)$ can be continued analytically to
the whole $s$-plane, satisfy the functional equation
\[
 \Lambda_{N}(s; M) = i^{\frac{m}{2}} \Lambda_{N}\left(\frac{m}{2}-s; M^*\right),
\]
and the function
\[
 \Lambda_{N}(s; M)+ \frac{a(0)}{s}+ \frac{i^{\frac{m}{2}}\cdot b(0)}{\frac{m}{2}-s}
\]
is holomorphic on the whole $s$-plane and bounded on any vertical strip.
\end{lemma}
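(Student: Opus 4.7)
My plan is to reduce the lemma to Siegel's functional equation~\eqref{form:FEofSiegelZeta} applied with $\phi = \phi_{0} = \mathrm{ch}_{V_{\Z}}$, which is self-dual ($\widehat{\phi_{0}} = \phi_{0}$). The hypothesis $m-p \in 2\Z$ makes the off-diagonal entry $\sin\pi(m-p)/2$ of the gamma matrix vanish, so the $+$-components decouple. From~\eqref{form:HolDSM}, \eqref{form:HolDSMStar} together with $\widehat{P}(v^{*}) = N P^{*}(v^{*})$, one has
\[
\Lambda_{N}(s;M) = (2\pi)^{-s} N^{s/2} |D|^{-1/2} \Gamma(s)\, \zeta_{+}(\phi_{0};s), \quad
\Lambda_{N}(s;M^{*}) = (-1)^{(m-2p)/4} (2\pi)^{-s} N^{m/4-s/2} \Gamma(s)\, \zeta_{+}^{*}(\widehat{\phi_{0}};s).
\]
Theorem~\ref{theorem:SiegelZetaProperties}~(1) gives the meromorphic continuation; the residue formula~\eqref{form:ResidueZetaS=1} carries the factor $\sin\pi(m-p)/2 = 0$, so the only pole of $\zeta_{+}(\phi_{0};s)$ and of $\zeta_{+}^{*}(\widehat{\phi_{0}};s)$ is the simple one at $s = m/2$ with residue $\int_{SO(Y)_{\R}/SO(Y)_{\Z}} d^{1}g$.

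The first row of~\eqref{form:FEofSiegelZeta} becomes the scalar identity
\[
\zeta_{+}(\phi_{0}; m/2 - s) = \Gamma(s+1-m/2)\Gamma(s) |D|^{1/2}\cdot 2^{-2s+m/2}\pi^{-2s+m/2-1} \sin\pi(p/2-s)\, \zeta_{+}^{*}(\widehat{\phi_{0}};s),
\]
and the integer shift $\sin\pi(p/2-s) = (-1)^{(m-p)/2}\sin\pi(m/2-s)$ combined with Euler's reflection $\Gamma(m/2-s)\Gamma(s+1-m/2) = \pi/\sin\pi(m/2-s)$ collapses the sine and gamma factors to $(-1)^{(m-p)/2}\pi$. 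After collecting the normalisations I obtain $\Lambda_{N}(m/2-s;M) = (-1)^{(m-p)/2 - (m-2p)/4}\Lambda_{N}(s;M^{*}) = i^{m/2}\Lambda_{N}(s;M^{*})$, since $(m-p)/2 - (m-2p)/4 = m/4$ and the factor $(-1)^{m/4} = e^{\pi i m/4}$ equals $i^{m/2}$; this is the claimed functional equation.

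To rule out spurious singularities of $\Lambda_{N}(s;M)$, I evaluate the same scalar identity at $s = m/2 + k$ ($k \in \Z_{\geq 1}$): the sine factor $\sin\pi(p/2 - m/2 - k) = \pm\sin\pi((m-p)/2 + k)$ vanishes while $\zeta_{+}^{*}(\widehat{\phi_{0}}; m/2+k)$ is finite, so $\zeta_{+}(\phi_{0};-k) = 0$, which cancels the trivial poles of $\Gamma(s)$ at negative integers. Hence $\Lambda_{N}(s;M)$ has only simple poles at $s=0$ and $s=m/2$; the residue at $s=m/2$ equals $(2\pi)^{-m/2} N^{m/4} |D|^{-1/2}\Gamma(m/2)\int d^{1}g = i^{m/2}b(0)$ by~\eqref{form:ResidueZetaS=m/2}. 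The residue at $s=0$ is $L(M;0) = |D|^{-1/2}\zeta_{+}(\phi_{0};0)$, and I extract it from the scalar functional equation at $s=m/2$, where the simple zero of $\sin\pi(p/2-s)$ of slope $-\pi(-1)^{(m-p)/2}$ meets the simple pole of $\zeta_{+}^{*}(\widehat{\phi_{0}};s)$ of residue $\int d^{1}g$ to give $\zeta_{+}(\phi_{0};0) = -(-1)^{(m-p)/2}(2\pi)^{-m/2}|D|^{1/2}\Gamma(m/2)\int d^{1}g$, and hence $L(M;0) = -a(0)$. Boundedness on vertical strips is then immediate from Stirling's exponential decay of $|\Gamma(\sigma+it)|$ combined with the finite-order growth of $(s-m/2)\zeta_{+}(\phi_{0};s)$ from Theorem~\ref{theorem:SiegelZetaProperties}~(1). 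The delicate point is the residue match at $s=0$, where the parity exponents $(m-p)/2$ and $(m-2p)/4$ must conspire correctly to produce the clean constant $-a(0)$; this is the only nontrivial bookkeeping obstacle, and the corresponding statement for $\Lambda_{N}(s;M^{*})$ follows from the functional equation just established.
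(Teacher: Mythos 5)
Your proposal is correct and takes essentially the route the paper intends: the paper states this lemma without an explicit proof, merely observing in the preceding bullet points that for $m-p$ even the first row of~\eqref{form:FEofSiegelZeta} decouples and appealing to Theorem~\ref{theorem:SiegelZetaProperties}, and you have carried out exactly that bookkeeping --- the reduction $\sin\pi(p/2-s)=(-1)^{(m-p)/2}\sin\pi(m/2-s)$ combined with reflection, the sign count $(m-p)/2-(m-2p)/4=m/4$ giving $i^{m/2}$, the vanishing $\zeta_{+}(\phi_0;-k)=0$ cancelling the poles of $\Gamma(s)$, and the residue identifications $\Res_{s=0}\Lambda_{N}(s;M)=L(M;0)=-a(0)$ and $\Res_{s=m/2}\Lambda_{N}(s;M)=i^{m/2}b(0)$ all check out. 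The one step stated too quickly is boundedness on vertical strips: Stirling decay of $|\Gamma(\sigma+it)|$ times a function that is merely of finite order ($e^{B|t|^{\rho}}$, $\rho$ possibly large) is not automatically bounded; the standard fix is boundedness of $\Lambda_{N}(s;M)$ on two vertical lines (absolute convergence on the right, the functional equation on the left) plus Phragm\'en--Lindel\"of in between, which is what the paper implicitly relies on via Ueno~\cite{Ueno}.
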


Let $r$ be an odd prime with $(N, r)=1$.
We denote by $\varphi=\left(\frac{*}{r}\right)$ the Dirichlet character defined by
the quadratic residue symbol.
For a {\it primitive} Dirichlet character $\psi\bmod{r}$,  we define Dirichlet series
$L(M;s, \psi)$ and $L(M^*; s, \psi)$ by
\begin{align*}
L(M;s, \psi) &=\sum_{n=1}^{\infty} \frac{\psi(n) a(n)}{n^s}, \\
L(M^*;s,\psi) &=
\begin{cases}
\displaystyle
r\sum_{n=1}^{\infty} \frac{b(rn)}{(rn)^{s}} - \sum_{n=1}^{\infty} \frac{b(n)}{n^s}
& \text{if $m$ is odd and}\ \psi= \varphi=\left(\dfrac{\ast}{r}\right), \\[20pt]
\displaystyle
\sum_{n=1}^{\infty} \frac{\psi(n)b(n)}{n^s} & \text{otherwise},
\end{cases}
\end{align*}
where $a(n)$ and $b(n)$ are defined by \eqref{form:HolDSM} and 
\eqref{form:HolDSMStar}, respectively. 
Furthermore, we set
\begin{align*}
\Lambda_{N}(s; M, \psi) &=
\left(\frac{2\pi}{r\sqrt{N}}\right)^{-s}\cdot \Gamma(s)\cdot L(M; s, \psi), \\
\Lambda_{N}(s; M^*, \psi) &=
\left(\frac{2\pi}{r\sqrt{N}}\right)^{-s}\cdot \Gamma(s)\cdot L(M^*;s, \psi). 
\end{align*}
Then, by using Lemma~\ref{lemma:StarkModified}, 
the formulas~\eqref{eqn:Gauss sum non-trivial psi} 
and~\eqref{eqn:Gauss sum trivial psi}, we can prove the following
\begin{lemma}
Assume that $m-p$ is even. 
\begin{enumerate}
\item In the case of even $m$, for any primitive Dirichlet character $\psi\bmod{r}$, 
$\Lambda_{N}(s;M, \psi)$ can be holomorphically continued to the whole $s$-plane,
bounded on any vertical strip, and satisfies the following functional equation
\[
\Lambda_{N}(s; M, \psi)= i^{\frac{m}{2}} C_{\psi} \Lambda_{N}
\left(\frac{m}{2}-s; M^*, \overline{\psi}\right)
\]
with the constant
\[
 C_{\psi}= \chi_{K}(r) \psi(-N)\tau_{\psi}/\tau_{\overline{\psi}}.
\]
\item In the case of odd $m$, for any primitive Dirichlet character $\psi\mod{r}$
with $\psi\neq \varphi=\left(\frac{*}{r}\right)$, 
$\Lambda_{N}(s;M, \psi)$ can be holomorphically continued to the whole $s$-plane,
bounded on any vertical strip, and satisfies the following functional equation
\[
\Lambda_{N}(s; M, \psi)= i^{\frac{m}{2}} C_{\psi}^{(1)} \Lambda_{N}
\left(\frac{m}{2}-s; M^*, \overline{\psi}\varphi\right)
\]
with the constant
\[
 C_{\psi}^{(1)}= \left(\frac{-1}{m}\right)^{\frac{m-1}{2}} \cdot \chi_{K}(r)\left(\frac{N}{r}\right)
 \psi(-N)\epsilon_{r}^{-1} \tau_{\psi\varphi}/\tau_{\overline{\psi}}.
\]
\item In the case that $m$ is odd and $\psi= \varphi=\left(\frac{*}{r}\right)$,
\[
 \Lambda_{N}(s; M, \psi)+ C_{\psi}^{(2)}\frac{(r^{1/2}-r^{-1/2})b(0)}{\frac{m}{2}-s}
\]
can be holomorphically continued to the whole $s$-plane,
bounded on any vertical strip, and satisfies the following functional equation
\[
\Lambda_{N}(s; M, \varphi)= i^{\frac{m}{2}} C_{\psi}^{(2)} \Lambda_{N}
\left(\frac{m}{2}-s; M^*, \varphi\right)
\]
with the constant
\[
  C_{\psi}^{(2)}= \left(\frac{-1}{m}\right)^{\frac{m-1}{2}} \cdot \chi_{K}(r).
\]
\end{enumerate}
\end{lemma}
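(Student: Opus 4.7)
The plan is to derive all three functional equations from the master functional equation for the Siegel zeta functions in Theorem~\ref{theorem:SiegelZetaProperties}(2), applied to the test function $\phi = \phi_{\overline{\psi}, P}$. Under the hypothesis that $m - p$ is even we have $\sin\bigl(\tfrac{\pi(m-p)}{2}\bigr) = 0$, so the $2 \times 2$ scattering matrix in \eqref{form:FEofSiegelZeta} degenerates to upper-triangular and its first row yields the scalar identity
\begin{equation*}
\zeta_{+}\!\left(\phi_{\overline{\psi},P};\tfrac{m}{2}-s\right) = \Gamma\!\left(s{+}1{-}\tfrac{m}{2}\right)\Gamma(s)\,|D|^{\frac{1}{2}}\,2^{-2s+\frac{m}{2}}\,\pi^{-2s+\frac{m}{2}-1}\,\sin\pi\!\left(\tfrac{p}{2}-s\right)\,\zeta_{+}^{*}\!\left(\widehat{\phi_{\overline{\psi},P}};s\right).
\end{equation*}
All three parts of the lemma will be deduced by re-expressing both sides as completed twisted $L$-functions.

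On the left, since $\overline{\psi}$ is primitive and non-principal, the formula \eqref{eqn:Gauss sum non-trivial psi} gives $\tau_{\overline{\psi}}(n) = \psi(n)\tau_{\overline{\psi}}$ for $(n,r)=1$ and zero otherwise, so combined with $M(P;n) = |D|^{1/2}a(n)$ one obtains $\zeta_{+}(\phi_{\overline{\psi},P};s) = |D|^{1/2}\tau_{\overline{\psi}}\,L(M;s,\psi)$. On the right, Lemma~\ref{lemma:StarkModified} applied with the character $\overline{\psi}$ says $\widehat{\phi_{\overline{\psi},P}}$ is supported on $r^{-1}\Z^{m}$ with value $r^{-m/2}\chi_{K}(r)\,C_{2p-m,r}\,(\overline{\psi})^{*}(-N)\,\tau_{(\overline{\psi})^{*}}(\widehat{P}(v^{*}))$ at $r^{-1}v^{*}$; substituting into \eqref{form:DefOfDualSiegelZeta}, using Lemma~\ref{lemma:ScalarMultInv}(1) to replace $\mu^{*}(r^{-1}w)$ by $\mu^{*}(w)$ and $P^{*}(r^{-1}w) = r^{-2}\widehat{P}(w)/N$, one gets
\begin{equation*}
\zeta_{+}^{*}\!\left(\widehat{\phi_{\overline{\psi},P}};s\right) = r^{2s-\frac{m}{2}}\,N^{s}\,\chi_{K}(r)\,C_{2p-m,r}\,(\overline{\psi})^{*}(-N)\sum_{n=1}^{\infty}\frac{\tau_{(\overline{\psi})^{*}}(n)\,M^{*}(\widehat{P};n)}{n^{s}}.
\end{equation*}

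The three cases are now distinguished by $(\overline{\psi})^{*}(k) = \psi(k)\bigl(\tfrac{k}{r}\bigr)^{m}$: in case 1 we have $(\overline{\psi})^{*} = \psi$, in case 2 $(\overline{\psi})^{*} = \psi\varphi$, both non-principal, so \eqref{eqn:Gauss sum non-trivial psi} converts the inner sum into $\tau_{(\overline{\psi})^{*}}\,L(M^{*};s,\overline{\psi})$ and $\tau_{\psi\varphi}\,L(M^{*};s,\overline{\psi}\varphi)$ respectively, after using $\overline{\varphi} = \varphi$ and $M^{*}(\widehat{P};n) = (-1)^{(2p-m)/4}N^{-m/4}b(n)$. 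In case 3 we have $(\overline{\psi})^{*} = \varphi^{2} = \psi_{r,0}$ principal, and inserting \eqref{eqn:Gauss sum trivial psi} directly rewrites the inner sum as $(-1)^{(2p-m)/4}N^{-m/4}\bigl[-L(M^{*};s) + r\sum_{k} b(rk)/(rk)^{s}\bigr] = (-1)^{(2p-m)/4}N^{-m/4}\,L(M^{*};s,\varphi)$, matching the paper's ad hoc definition exactly. Substituting $s \mapsto \tfrac{m}{2}-s$, multiplying by the $(2\pi/(r\sqrt{N}))^{-s}\Gamma(s)$ prefactor that promotes $L$ to $\Lambda_{N}$, and using the reflection identity $\Gamma(z)\Gamma(1-z)\sin(\pi z) = \pi$ together with Legendre duplication to consolidate the $\Gamma(s+1-m/2)\Gamma(s)\sin\pi(p/2-s)$ combination, the constants $C_{\psi}, C_{\psi}^{(1)}, C_{\psi}^{(2)}$ emerge from the remaining ratio of Gauss sums $\tau_{(\overline{\psi})^{*}}/\tau_{\overline{\psi}}$ weighted by $\chi_{K}(r)$, $(\overline{\psi})^{*}(-N)$, $C_{2p-m,r}$, the sign $e^{\pi i(2p-m)/4}$ coming from \eqref{form:ModifiedZeta}, and (when $\psi = \varphi$) the evaluation $\tau_{\varphi} = \varepsilon_{r}r^{1/2}$.

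The main obstacle is the pole analysis in case 3. Because $(\overline{\varphi})^{*}$ is principal, Stark's lemma evaluated at $v^{*} = 0$ produces $\widehat{\phi_{\overline{\varphi},P}}(0) = r^{-m/2}(r-1)\chi_{K}(r)C_{2p-m,r} \ne 0$, so the residue formula \eqref{form:ResidueZetaS=m/2} yields a genuine simple pole of $\zeta_{+}(\phi_{\overline{\varphi},P};s)$ at $s = m/2$, which via $\zeta_{+}(\phi_{\overline{\varphi},P};s) = |D|^{1/2}\tau_{\varphi}L(M;s,\varphi)$ transfers to $L(M;s,\varphi)$; the other potential pole at $s = 1$ vanishes because $\sin\pi(m-p)/2 = 0$ forces the residue in \eqref{form:ResidueZetaS=1} to be zero, which is why only a single additive correction appears. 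Computing $\mathrm{Res}_{s=m/2}L(M;s,\varphi) = |D|^{-1/2}\tau_{\varphi}^{-1}\,\widehat{\phi_{\overline{\varphi},P}}(0)\int_{SO(Y)_{\R}/SO(Y)_{\Z}}d^{1}g$, multiplying by $(2\pi/(r\sqrt{N}))^{-m/2}\Gamma(m/2)$ to obtain the residue of $\Lambda_{N}(s;M,\varphi)$, and using $\tau_{\varphi}^{-1}(r-1) = \varepsilon_{r}^{-1}(r^{1/2} - r^{-1/2})$ together with the explicit form of $b(0)$, recovers exactly $C_{\psi}^{(2)}(r^{1/2} - r^{-1/2})b(0)$, so the stated correction term cancels this pole. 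Vertical-strip boundedness and the remaining holomorphy then follow from the corresponding properties of $Z(f,\phi;s)$ established in Lemma~\ref{lemma:ACandFEofZetaIntegral}(3).
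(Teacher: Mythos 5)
Your argument is correct and is precisely the route the paper intends: the lemma is stated there without a written proof, the author citing only Lemma~\ref{lemma:StarkModified} and the Gauss-sum evaluations \eqref{eqn:Gauss sum non-trivial psi}--\eqref{eqn:Gauss sum trivial psi}, and your proposal supplies exactly that derivation --- the first row of \eqref{form:FEofSiegelZeta} with $\phi=\phi_{\overline{\psi},P}$, the identification of $(\overline{\psi})^{*}$ in the three cases, and the isolation of the pole at $s=m/2$ (forced by $\tau_{\psi_{r,0}}(0)=r-1\neq 0$) only when $(\overline{\psi})^{*}$ is principal. One cosmetic slip: since it is the $(1,2)$ entry $\sin\frac{\pi(m-p)}{2}$ that vanishes, the matrix in \eqref{form:FEofSiegelZeta} becomes lower, not upper, triangular, though the first row still yields the scalar identity you use; also the vertical-strip boundedness is supplied by Theorem~\ref{theorem:SiegelZetaProperties}~(1) (via Ueno's method) rather than by Lemma~\ref{lemma:ACandFEofZetaIntegral}~(3).
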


These lemmas show that  Weil's converse theorems for holomorphic modular forms can apply to 
$L(M;s)$ and $L(M^*;s)$. 
We refer to
Miyake~\cite[Theorem~4.3.15]{Miyake} 
for Weil's converse theorem for the case of integral weight. 
For the case of half-integral weight, 
Shimura~\cite{Shimura73} stated a similar converse theorem.
Although the details were not given in ~\cite{Shimura73}, the proof is roughly identical to 
the case of  integral weight, and can be found in 
Bruinier~\cite{Bruinier}.
We therefore obtain the following
\begin{theorem}
\label{theorem:HolomorphicForms}
Assume that $m-p$ is even. 
We define holomorphic functions $F(z)$ and $G(z)$ on $\mathcal{H}$ by
\begin{align*}
F(z) &= (-1)^{\frac{m-p}{2}} (2\pi)^{-\frac{m}{2}} \cdot \Gamma\left(\frac{m}{2}\right) 
\int_{SO(Y)_{\R}/SO(Y)_{\Z}} d^{1}g \\
&\qquad 
 +  |D|^{-1/2}\cdot\sum_{n=1}^{\infty}   M(P;n)\mathbf{e}[nz], \\
G(z) &= 
i^{-\frac{m}{2}}\cdot (2\pi)^{-\frac{m}{2}} \cdot \Gamma\left(\frac{m}{2}\right) N^{\frac{m}{4}} |D|^{-1/2}
\int_{SO(Y)_{\R}/SO(Y)_{\Z}} d^{1}g \\
&\qquad 
+ (-1)^{\frac{m-2p}{4}}\cdot N^{\frac{m}{4}}\cdot \sum_{n=1}^{\infty}
M^*(\widehat{P};n)\mathbf{e}[nz].
\end{align*}
Then, $F(z)$ (resp. $G(z)$) is a holomorphic modular  form for $\Gamma_0(N)$ of weight $m/2$
with character $\chi_{K}$ (resp.\ $\chi_{K_N}$).
Further we have
\[
F\left(-\frac{1}{Nz}\right) 
(\sqrt{N} z)^{-m/2} = G(z).
\]
\end{theorem}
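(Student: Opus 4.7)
The plan is to exploit the parity condition $m-p\equiv 0\pmod 2$ to decouple Siegel's functional equation~\eqref{form:FEofSiegelZeta} into a single Hecke-type functional equation for $\zeta_+$ and $\zeta_+^*$, and then apply Weil's converse theorem for holomorphic modular forms. Because $\sin\tfrac{\pi(m-p)}{2}=0$ under our hypothesis, the top row of~\eqref{form:FEofSiegelZeta} collapses to
\[
\zeta_+\!\left(\phi;\tfrac{m}{2}-s\right)=\Gamma\!\left(s+1-\tfrac{m}{2}\right)\Gamma(s)|D|^{1/2}2^{-2s+m/2}\pi^{-2s+m/2-1}\sin\pi\!\left(\tfrac{p}{2}-s\right)\zeta_+^*(\widehat\phi;s).
\]
Specializing to $\phi=\phi_0=\mathrm{ch}_{V_\Z}$, which is self-dual under the Fourier transform~\eqref{form:FourierTransformOverQ}, and converting to the normalized series $\Lambda_N(s;M), \Lambda_N(s;M^*)$ by means of the gamma-reflection formula and the change of variable $\widehat P(v^*)=NP^*(v^*)$, one arrives at the first auxiliary lemma $\Lambda_N(s;M)=i^{m/2}\Lambda_N(\tfrac{m}{2}-s;M^*)$, where all signs and phases amalgamate into the clean root number $i^{m/2}$. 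Meromorphy and boundedness on vertical strips come from Theorem~\ref{theorem:SiegelZetaProperties}~(1), while the residue formulas of Theorem~\ref{theorem:SiegelZetaProperties}~(3) pin down $a(0)$ and $b(0)$ in terms of the Haar-measure integral $\int_{SO(Y)_\R/SO(Y)_\Z}d^1g$ via the shift and the completion factor $(2\pi)^{-m/2}\Gamma(m/2)$.

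For the twisted versions I would take $\phi=\phi_{\psi,P}\in\Sch{V_\Q}$ for each primitive Dirichlet character $\psi\bmod r$ with $r$ an odd prime coprime to $N$, compute its Fourier transform using Lemma~\ref{lemma:StarkModified}, and re-run the decoupling. Three subcases must be tracked according to the parity of $m$ and whether $\psi$ equals the quadratic-residue character $\varphi$. When $m$ is even, $\psi^*=\overline\psi$ and the argument is parallel to the untwisted one, yielding the constant $C_\psi=\chi_K(r)\psi(-N)\tau_\psi/\tau_{\overline\psi}$. When $m$ is odd and $\psi\ne\varphi$, one has $\psi^*=\overline\psi\varphi$, and the factor $C_{2p-m,r}=\varepsilon_r^{2p-m}$ from~\eqref{eqn:def of clr} introduces the $\varepsilon_r^{-1}$ in $C_\psi^{(1)}$ together with the sign $\left(\frac{-1}{m}\right)^{(m-1)/2}$. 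The delicate subcase is $m$ odd with $\psi=\varphi$: then $\psi^*$ becomes the principal character $\psi_{r,0}$, and the Gauss-sum evaluation~\eqref{eqn:Gauss sum trivial psi} replaces the generic formula~\eqref{eqn:Gauss sum non-trivial psi}, giving the anomalous value $r-1$ at $n\equiv 0\pmod r$. The combination $r\sum b(rn)(rn)^{-s}-\sum b(n)n^{-s}$ in the definition of $L(M^*;s,\psi)$ is exactly what rewrites $\sum\tau_{\psi_{r,0}}(n)b(n)n^{-s}$, and the nonzero value at $n=0$ is responsible for the corrective polar term $C_\psi^{(2)}(r^{1/2}-r^{-1/2})b(0)/(\tfrac{m}{2}-s)$ in the third auxiliary lemma.

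With the three twisted functional-equation lemmas in hand, Theorem~\ref{theorem:HolomorphicForms} follows from Weil's converse theorem: Miyake~\cite[Theorem~4.3.15]{Miyake} supplies the conclusion when $m$ is even (integral weight $m/2$, character $\chi_K$), and the half-integral-weight version of Shimura~\cite{Shimura73} as elaborated by Bruinier~\cite{Bruinier} handles the case $m$ odd (weight $m/2$ with character $\chi_{K_N}$ on the $G$-side due to the extra $\left(\frac{N}{d}\right)$ from the Shimura cocycle). The companion identity $F(-1/(Nz))(\sqrt Nz)^{-m/2}=G(z)$ is an immediate output of the converse theorem rather than something to be proved separately. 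The main obstacle is the careful bookkeeping of root numbers in the half-integral-weight case: one must verify that the product of $\chi_K(r)$, $\psi(-N)$, the Gauss-sum ratio $\tau_\psi/\tau_{\overline\psi}$ (or $\tau_{\psi\varphi}/\tau_{\overline\psi}$), the factor $\varepsilon_r^{-1}$, and the sign $\left(\frac{-1}{m}\right)^{(m-1)/2}$ together reproduces exactly the root number required by the Shimura--Bruinier converse theorem for $\Gamma_0(N)$ with character $\chi_K$, and that the exceptional case $\psi=\varphi$ contributes a holomorphic rather than a meromorphic correction to $F$ after the polar term of $\Lambda_N(s;M,\varphi)$ is matched against $b(0)$.
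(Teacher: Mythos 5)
Your proposal is correct and follows essentially the same route as the paper: decoupling the functional equation~\eqref{form:FEofSiegelZeta} via $\sin\frac{\pi(m-p)}{2}=0$, normalizing to the Hecke-type equation $\Lambda_N(s;M)=i^{m/2}\Lambda_N(\frac{m}{2}-s;M^*)$, establishing the twisted functional equations in the same three subcases (including the exceptional case $m$ odd, $\psi=\varphi$ where $\psi^*$ degenerates to the principal character and produces the corrective polar term in $b(0)$), and then invoking Miyake's and Shimura--Bruinier's converse theorems. This matches the paper's argument step for step.
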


\begin{remark}
If $p$ is even, we can prove the same assertion for $M(P;-n)$.
Theorem~\ref{theorem:SiegelMaass} excludes the case where both $m$ and $p$ are even, 
but Theorem~\ref{theorem:HolomorphicForms} shows that 
both $\zeta_+$ and $\zeta_-$ correspond to holomorphic modular forms
in this case. 
\end{remark}

%
%
%
%
%

\end{document}